\newcommand*\justify{%
  \fontdimen2\font=0.4em% interword space
  \fontdimen3\font=0.2em% interword stretch
  \fontdimen4\font=0.1em% interword shrink
  \fontdimen7\font=0.1em% extra space
  \hyphenchar\font=`\-% allowing hyphenation
}% solve line break problem in texttt
\newtheorem{The}{Theorem}[section]
\newtheorem{Cor}[The]{Corollary}
\newtheorem{Lem}[The]{Lemma}
\newtheorem{Pro}[The]{Proposition}
\theoremstyle{remark}
\newtheorem{Rem}{Remark}
\numberwithin{equation}{section}
\newcommand{\T}{\mathbb{T}}
\newcommand{\R}{\mathbb{R}}
\newcommand{\Z}{\mathbb{Z}}
\newcommand{\CC}{\mathbb{C}}
\newcommand{\cS}{\mathcal{S}}
\newcommand{\cL}{\mathcal{L}}
\newcommand{\cU}{\mathcal{U}}
\newcommand{\cV}{\mathcal{V}}
\newcommand{\cW}{\mathcal{W}}
\newcommand{\cB}{\mathcal{B}}
\newcommand{\cD}{\mathcal{D}}
\newcommand{\cM}{\mathcal{M}}
\newcommand{\HH}{\mathcal{H}}
\newcommand{\La}{\Lambda}
\newcommand{\tcM}{\widetilde{\mathcal{M}}}
\newcommand{\tLa}{\widetilde{\Lambda}}
\newcommand{\tPh}{\widetilde{\Phi}}
\newcommand{\tGa}{\widetilde{\Gamma}}
\newcommand{\tsig}{\widetilde{\sigma}}
\newcommand{\tk}{\widetilde{k}}
\newcommand{\vep}{\varepsilon}
\newcommand{\vph}{\varphi}
\newcommand{\wtH}{\widetilde{\mathcal{H}}}
\newcommand{\whf}{\widehat{f}}
\newcommand{\wts}{\widetilde{s}}
\title{Analytic genericity of diffusing orbits in \textit{a priori} unstable Hamiltonian systems}
\author{Qinbo Chen}
\address{Department of Mathematics, KTH Royal Institute of Technology, 10044 Stockholm, Sweden}
\email{qinbochen1990@gmail.com}
\author{Rafael de la Llave}
\address{School of Mathematics, Georgia Institute of Technology, Atlanta, GA 30332, USA}
\email{rafael.delallave@math.gatech.edu}
\subjclass[2010]{
37J40, %Perturbations of finite-dimensional Hamiltonian systems, normal forms, small divisors, KAM theory, Arnol'd diffusion 
37J25, %  Stability problems for finite-dimensional Hamiltonian and Lagrangian systems 
70H08, % (2000-now) Nearly integrable Hamiltonian systems, KAM theory 
70H33% Symmetries and conservation laws, reverse symmetries, invariant manifolds and their bifurcations, reduction for problems in Hamiltonian and Lagrangian mechanics 
}
\keywords{Arnold diffusion, genericity, scattering map, Melnikov method}
\begin{document}
\begin{abstract}
The genericity of Arnold diffusion in the analytic category is an open problem. In this paper, we  study  this problem  in the following \textit{a priori} unstable Hamiltonian system with a time-periodic perturbation
\[\mathcal{H}_\varepsilon(p,q,I,\varphi,t)=h(I)+\sum_{i=1}^n\pm \left(\frac{1}{2}p_i^2+V_i(q_i)\right)+\varepsilon H_1(p,q,I,\varphi, t), \]
where $(p,q)\in \mathbb{R}^n\times\mathbb{T}^n$, $(I,\varphi)\in\mathbb{R}^d\times\mathbb{T}^d$  with $n, d\geq 1$,   $V_i$ are Morse potentials, and $\varepsilon$ is a  small non-zero parameter. The unperturbed Hamiltonian is not necessarily convex,  and the induced inner dynamics  does not need to satisfy a twist condition. Using geometric methods we prove that Arnold diffusion occurs for generic analytic perturbations $H_1$. Indeed, the set of admissible $H_1$ is $C^\omega$ dense and $C^3$ open (a fortiori,  $C^\omega$ open). Our perturbative technique for the genericity is valid in the $C^k$ topology for all  $k\in [3,\infty)\cup\{\infty, \omega\}$.
\end{abstract}

\maketitle

\section{Introduction}\label{section_Introduction}
The goal of this paper is to study the Arnold diffusion problem for analytic perturbations of  a given
\textit{a priori} unstable Hamiltonian system.
Arnold diffusion is a phenomenon of instability in Hamiltonian systems with more than two degrees of freedom. 
This problem arises in the study of the effect of small perturbations on integrable systems, and has attracted a lot of attention both in mathematics and in physics due to its importance for the applications. 

For a nearly integrable Hamiltonian system,
the celebrated Kolmogorov-Arnold-Moser (KAM)  theory asserts that the most part  (in the measure-theoretic sense) of the phase space is filled with KAM invariant tori carrying quasi-periodic dynamics. Arnold diffusion asks for the large scale motions in the complement of KAM tori.
The first example is constructed by V. I. Arnold in \cite{Ar1964}. He also conjectured that the diffusive phenomena occur for generic  systems: {\small \texttt{\justify  the typical case in a multidimensional  problem is topological instability: through an arbitrarily small neighborhood of any point there passes a  phase trajectory along which the action variables go away from the initial values by a quantity of order one}} \cite[Chapter 6]{Arnoldbook2006}.  One of the main problems in this conjecture  is the genericity  in some appropriate function space (e.g. $C^r$-differentiable, analytic).  The genericity in the $C^r$-differentiable topology is now well understood. However, as pointed out in a recent survey \cite{Cheng_Xue2019}, it remains a deep open problem to prove Arnold diffusion in the analytic category. This issue is of great interest since  many Hamiltonians with physical significance are analytic.

In this paper we give an affirmative answer to 
 the $C^\omega$-genericity issue of Arnold diffusion for \textit{a priori} unstable Hamiltonian systems. The \textit{a priori} unstable Hamiltonian system  
 consists of a rotor-pendulum system  plus a time periodic perturbation, and it can  be viewed as a scaled  approximation on the dynamics near simple resonances of the  \textit{a priori} stable systems \cite{CG1994}. 

Note that the proof presented in this paper works even if the functions $V_i(q_i):\T\to \R$, $i=1,\cdots,n$ in the unperturbed part are  small (weak hyperbolicity), so our result applies to some \textit{a priori} stable systems, see Remark \ref{rem_smallV} for more explanation.   
Our approach for the proof follows a recent geometric mechanism established in
  \cite{Gidea_Delallave_seara2020}. 
This mechanism relies on the presence of 
   normally hyperbolic invariant manifold (NHIM), with transverse intersection between the associated stable and unstable manifolds. Indeed, the Melnikov method will be used to show transverse homoclinic orbits in the perturbed system. 
   In this setting, we can then use the theory of scattering maps to compute the effect of homoclinic excursions. Heuristically, 
   the scattering map  gives the future asymptotic of an orbit as a function of its past asymptotic \cite{DLS2000, DLS2008}.  By shadowing the pseudo-orbits of this map,  it allows to  show instability for the original dynamics.
  
  Here, we give a brief overview of the previous works and approaches
  on the genericity problem of Arnold diffusion.  The scattering map
  has become an effective tool to study the phenomena of instability
  in concrete examples or generic systems.  By exploiting this
  geometric tool, Arnold diffusion has been proved to occur for
  generic perturbations in the $C^r$-differentiable topology, see for
  instance
  \cite{DLS2000,DLS2016,Delshams_Huguet2009,Gidea_Delallave_seara2020}.

  In particular, the geometric mechanism developed in
  \cite{Gidea_Delallave_seara2020} requires almost no information of
  the inner dynamics on the NHIM. Only recurrence of the motion in the
  NHIM is needed, and it is automatically satisfied in the Hamiltonian
  case by Poincar\'e recurrence theorem if the motions in the NHIM are
  bounded (Of course, if the motions in the NHIM are not bounded, 
one has diffusion in the NHIM!).

The main
 hypothesis of  the mechanism of \cite{Gidea_Delallave_seara2020}  is 
some explicit transversality conditions, which  are implied 
by checking that some Melnikov-type functions have
non-degenerate critical points.  Sometimes the dynamics of a single
scattering map may have difficulties moving long distances. But if
several scattering maps are available, one can iterate these
scattering maps to find large scale motions. 
For applications of the mechanism of
\cite{Gidea_Delallave_seara2020_skip} in celestial mechanics see e.g.
\cite{CGD_Arnold2017, DKDS_global2019}.

Another mechanism assuming mainly recurrence -- but assuming some 
separation of time scales appears in \cite{Gidea_dlL2017}. 
Geometric methods that  use 
NHIM but assume  that there are some other 
invariant objects in the NHIM (e.g secondary tori) appear in
\cite{DLS2000, DLS2003, DLS2006,Delshams_Huguet2009, DLS2006orbits,DHLS2008,DLS2016}
and applications to celestial mechanics and other concrete models 
appear in \cite{Delshams_Huguet2009,FGKR_2016_Kirkwood, Dels_Scha18}. 
Another important geometric 
method based on  separatrix maps to study  Arnold diffusion
 can be found in \cite{Treschev2002Trajectories,Treschev2004,Treschev2012}, \emph{etc}.  It is worth mentioning that the variational method is also an effective approach to study the diffusion problem. The techniques and ideas developed by J. Mather  \cite{Mather1993, Mather2003} have significant influence. 
Applying global variational methods to convex Hamiltonian systems, several authors have established the genericity  in the $C^r$-differentiable topology, see \cite{Cheng_Yan2004,Bernard2008,Cheng_Yan2009,KZ2015,Cheng2017_DoubleResonace,Cheng2019,Bernard_Kaloshin_Zhang2016,Cheng_Xue2015,KZ_book2020}.  Note, however that in some of
these papers, the notion of genericity is redefined and that, of
course, a convexity condition is needed. 

This paper does not aim to review the rich history on this very active area.
There are many other related works, 
 we  mention here \cite{Bolotin_Treschev1999,Bessi_Chierchia_Valdinoci01,BB2002, Cr2003,LM2005,Gelfreich_Turaev2008,KLS2014, Xue2014,Gidea_dlL2017,GKZ2016,Marco2016Arnold,Chen_Cheng2017,Dels_Scha18,Gelfreich_Turaev2017} and references therein.

 Thus, as mentioned above, the genericity of Arnold diffusion in the $C^r$-differentiable category has been well studied. However, the most difficult case is the analytic genericity, which is still an open problem.
 The difficulty lies in that most of the previous works require the use of non-analytic techniques (e.g. bump functions) for perturbations.

In this paper,
  a new perturbative technique is introduced to solve the analytic genericity of Arnold diffusion.  Following \cite{Gidea_Delallave_seara2020}, the scattering map is used as an essential tool.
  We will take advantage of the Poincar\'e-Melnikov method and the family of periodic potential functions
 to verify the genericity of  some transversality hypotheses. 
As will see below, the novelty of our technique is as follows: (I) \textit{ It is valid in both the $C^r$-differentiable topology  and the $C^\omega$ topology}; (II) \textit{We can also obtain the genericity in the sense of Ma\~n\'e \cite{Mane1996}, that is, the genericity ( actually, $C^\omega$ dense and $C^3$ open ) in the space of   periodic potential functions.}

The basic idea of our method is as follows. The
work of \cite{Gidea_Delallave_seara2020} shows that it suffices
to verify some transversality conditions for the zeros of
a rather explicit Melnikov integral. Following the standard procedure
in transversality, 
we show that, if there are some degenerate
situations, more or less arbitrary perturbations break the degeneracy.
See Section~\ref{section_proof}.

Note that the families of perturbations we choose are rather arbitrary.
Hence, the result is stronger than density. We show that
the transversality -- and hence the diffusion --  can only
fail in an infinite codimension set. See Remark~\ref{rem:codimension}. 
 For practical applications,
we note that the only condition to check is a very explicit
(and rapidly convergent) integral, so that given a concrete
system (e.g. in celestial mechanics), one can verify the result
with a finite precision calculation and obtain quantitative information
on the location of the diffusing orbits.

\section{Setup and Main Result}\label{sec_mainresult}

\subsection{Notation and assumptions}
For positive integers $d\geq 1$ and $ n\geq 1$, we let  
$\cB\subset \R^d$ and $\cD\subset\R^n$ be two open domains with compact closures 
$\overline{\cB}$ and $\overline{\cD}$ respectively. Without loss of generality, we may suppose $\cB=\{ x\in\R^d: \|x\|<R_1 \}$ and $\cD=\{z\in\R^n: \|z\|< R_2\}$, where  $R_1$ and $R_2$ are suitably large, and $\|\cdot\|$ is the standard Euclidean norm.

We consider the following \textit{a priori} unstable system with a time-periodic perturbation:
\begin{equation}\label{main_H_system}
	\mathcal{H}_\vep=H_0(p,q,I)+\vep H_1(p,q,I,\vph, t), \qquad (p, q, I,\vph)\in \cD\times\T^n\times\cB\times\T^d.
	\end{equation}
	Here,  $p=(p_1,\cdots,p_n)$ and $q=(q_1,\cdots,q_n)$ are symplectically conjugate variables,  $I=(I_1,\cdots,I_d)$ and $\vph=(\vph_1,\cdots,\vph_d)$ are  symplectically conjugate variables, and $\T=\R/2\pi\Z$.  The unperturbed Hamiltonian $H_0$ is given by
\begin{equation}\label{unperturbed_H}
H_0(p,q,I)=h(I)+\sum_{i=1}^n \pm\Big[\frac{1}{2}p_i^2+V_i(q_i)\Big].
\end{equation} 
Here, the symbol $\pm$ in  \eqref{unperturbed_H} means that one can take either the plus sign $``+"$ or the minus sign $``-"$ in front of each pendulum $\frac{1}{2}p_i^2+V_i(q_i)$. The whole perturbation term $\vep H_1$ is assumed to be real analytic and  periodic on time $t$ with a period $2\pi$.  The unperturbed part $H_0$ represents a $d$-degree-of-freedom rotator  plus $n$ pendulums. $H_0$ is not necessarily convex, and the induced inner dynamics is not necessarily a twist map.

Throughout this paper, we  assume the following  conditions on $H_0$:
\begin{enumerate}
\item  [\textbf{(H1)}]   $h(I)$ and each $V_i(q_i)$ with $i\in\{1,\cdots, n\}$ are of class $C^{r}$ with the extended
  integer $r\in [3, \infty)\cup\{\infty, \omega\}$.
	\item [\textbf{(H2)}] For each $i$, the function $V_i:\T\to\R $ has a unique maximum point which is non-degenerate in the sense of Morse. Without loss of generality and to simplify the notation, we may always assume the maximum point $q_{_{\textup{max}}}=0$.
\end{enumerate}

Condition {\bf(H2)} tells us that $V'_i(0)=0$ and $V_i''(0)<0$ for each $i\in\{1,\cdots,n\}$. 
It is clear that the non-degeneracy condition {\bf(H2)}    is $C^2$-open and $C^\omega$-dense. We also remark that
the approach used in this paper is also applicable to those  systems whose unperturbed part is $h(I)+\sum_{i=1}^n  P_i(p_i,q_i)$, as long as each $P_i$ has a hyperbolic equilibrium and a homoclinic orbit.

Of course, the potentials satisfying Morse non-degeneracy are generic.

 The phase space is $\cM:=\cD\times\T^n\times\cB\times\T^d$, endowed with the standard symplectic form.
The corresponding Hamilton's equations are
\begin{equation}\label{Haml_eq}
\begin{array}{ll}
	\dot{p}=-\frac{\partial H_0}{\partial q}-\vep\frac{\partial H_1}{\partial q},\\ \\
	\dot{q}=\frac{\partial H_0}{\partial p}+\vep\frac{\partial H_1}{\partial p},
\end{array}
\quad
\begin{array}{ll}
\dot{I}=-\vep\frac{\partial H_1}{\partial \vph},\\ \\
 \dot{\vph}=\frac{\partial H_0}{\partial I}+\vep\frac{\partial H_1}{\partial I}.
\end{array}
\end{equation}
 It is clear that the dynamics of the unperturbed system $H_0$ is integrable.  Hence, the diffusion phenomena may occur only  if $\vep\neq 0$.
Denoting the extended phase space
 $$\tcM:=\cD\times\T^n\times\cB\times\T^d\times\T,$$
the perturbation function $H_1$ in \eqref{main_H_system} is assumed to be real analytic on $\tcM$, which means the analyticity can extend to a complex neighborhood of $\tcM$.
 
 For each $\kappa>0$ we denote by $\tcM_\kappa$ the set of all points $(p,q,I,\vph,t)\in$ $\CC^n\times\CC^n/(2\pi \Z)^n$ $\times\CC^d\times\CC^d/(2\pi\Z)^d\times\CC/(2\pi\Z)$  satisfying 
\[\text{dist}(p, \cD)<\kappa, \quad \text{dist}(I, \cB)<\kappa,\quad |\text{Im}~\iota|<\kappa,~ \iota=q_i,\vph_i,t.
\]
$\tcM_\kappa$ is an open domain in the complex space. In order to discuss the genericity of Arnold diffusion in the real analytic category,
 we introduce the following space of bounded analytic functions on $\tcM_\kappa$,
\[
C^\omega(\tcM_\kappa):=\left\{f: \tcM_\kappa\to \CC ~\Big|~ f~ \text{is analytic}~,~ \sup_{x\in \tcM_\kappa} |f(x)|<\infty, ~ f(\tcM)\subset \R\right\}.
\]
Note that $f\in C^\omega(\tcM_\kappa)$ is real-valued on $\tcM$.
Clearly, $C^\omega(\tcM_\kappa)$ with the sup-norm $\|f\|_{\kappa}:=\sup_{z\in \tcM_\kappa} |f(z)|$ is a Banach space.
Sometimes, for simplicity, we  use $C^\omega_\kappa$ instead of $C^\omega(\tcM_\kappa)$ when there is no confusion.

\subsection{Main Result}

Now, we are ready to state our main result on the genericity.
In what follows,  Hamiltonian system \eqref{main_H_system} is always assumed to satisfy conditions {\bf(H1)--(H2)}. 

\begin{The}\label{Main_Thm1}
Given $\kappa>0$, $I_0\in\cB$ and a small neighborhood $V_{I_0}$ of $I_0$. Then there exists an open and dense set $\cU\subset C^\omega_\kappa$, and for each $H_1\in \cU$ we can find $\vep_0=\vep_0(H_1)>0$ and $\rho=\rho(H_1)>0$  satisfying the following property: for each $\vep\in(-\vep_0,\vep_0)\setminus\{0\}$, the Hamiltonian flow of $\HH_\vep=H_0+\vep H_1$  admits a trajectory  whose action variables $I(t)$  satisfy 
\[\sup_{t>0}\|I(t)-I(0)\|\geq \rho, \]
where the initial condition $I(0)\in V_{I_0}$.
\end{The}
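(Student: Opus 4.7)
The plan is to reduce the theorem to an application of the geometric mechanism developed in \cite{Gidea_Delallave_seara2020}. That mechanism, once its transversality hypotheses are verified in a neighborhood of $I_0$, produces by shadowing a true trajectory of $\HH_\vep$ whose $I$-projection drifts by a fixed amount $\rho$ away from any initial point of $V_{I_0}$. The hypotheses amount to: (i) persistence of the rotator $\tLa_0=\{p=0,q=0\}\times\cB\times\T^d\times\T$ as a normally hyperbolic invariant manifold $\tLa_\vep$ with transversally intersecting stable and unstable manifolds; and (ii) the existence of a scattering map $\tsig_\vep:\tLa_\vep\to\tLa_\vep$ whose action component produces a non-trivial vector field on $\tLa_\vep$ near $I=I_0$, so that finitely many iterations of $\tsig_\vep$ generate a pseudo-orbit that moves across $V_{I_0}$ by at least $\rho$. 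Classical NHIM theory produces (i) for all small $\vep$ once a single transverse homoclinic is established. Hence the entire burden is to show that on an open dense $\cU\subset C^\omega_\kappa$ one can check both transversality of the homoclinic intersection and non-degeneracy of the action field of $\tsig_\vep$, and both will be controlled by a Poincar\'e--Melnikov computation.

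Concretely, let $(p^0_i(s),q^0_i(s))$ denote a separatrix of the $i$-th pendulum and set $\omega(I)=\partial_I h(I)$. The Melnikov potential along the product of separatrices,
\[
\cL(\tau;I,\vph,t)=\int_{-\infty}^{\infty}\!\Bigl[H_1\bigl(p^0(s+\tau),q^0(s+\tau),I,\vph+\omega(I)s,t+s\bigr)-H_1\bigl(0,0,I,\vph+\omega(I)s,t+s\bigr)\Bigr]\,ds,
\]
with $\tau=(\tau_1,\dots,\tau_n)\in\R^n$, controls both items: a non-degenerate critical point $\tau^*(I,\vph,t)$ of $\tau\mapsto\cL$ yields a transverse intersection of $W^{s,u}(\tLa_\vep)$ by the standard Melnikov theorem, and the first-order displacement of $\tsig_\vep$ on the action is $\vep\,\partial_\vph\cL(\tau^*;I,\vph,t)+O(\vep^2)$. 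Thus (i)--(ii) reduce to two explicit conditions on $\cL$: \textbf{(A)} existence of a non-degenerate $\tau$-critical point on an open set meeting $V_{I_0}\times\T^{d+1}$, and \textbf{(B)} non-triviality of $\partial_\vph\cL(\tau^*;\cdot)$ along a recurrent set of the inner dynamics of $\tLa_\vep$ near $I_0$. To verify \textbf{(A)}--\textbf{(B)} generically I would attach to any $H_1\in C^\omega_\kappa$ the finite-parameter real-analytic family
\[
H_1^\lambda=H_1+\sum_{|k|\leq N}\bigl(\lambda^{c}_k\cos(k\cdot x)+\lambda^{s}_k\sin(k\cdot x)\bigr),\qquad x=(q,\vph,t),\ \lambda\in\R^{L},
\]
which sits inside $C^\omega_\kappa$ with $\|\cdot\|_\kappa$-norm bounded by $|\lambda|$. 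The function $\cL$ depends linearly on $\lambda$, with coefficients of the form $\int e^{i(k_2\cdot\omega(I)+k_3)s}P_k\bigl(p^0(s+\tau),q^0(s+\tau)\bigr)\,ds$ which, by analyticity of the pendulum separatrices, are non-zero for infinitely many $k$. Taking $N$ sufficiently large and applying a finite-dimensional parametric transversality (Sard--Smale) argument to the jet of $\cL$ and its critical-point structure, one finds that the set of $\lambda$ for which either \textbf{(A)} or \textbf{(B)} fails is contained in a proper real-analytic subvariety of the parameter ball; letting $|\lambda|\to0$ along a good direction yields the density. Openness is immediate since \textbf{(A)}--\textbf{(B)} are explicit $C^3$-open conditions on the rapidly convergent integral $\cL$, and the inclusion $C^\omega_\kappa\hookrightarrow C^3$ is continuous by Cauchy estimates.

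The hard part is executing the density step strictly within the analytic class: the standard transversality argument would localize the perturbation by a bump function, which is unavailable here. One must therefore rely exclusively on globally analytic trigonometric perturbations and extract the required finite-codimension transversality from the non-vanishing of rapidly decaying Fourier-type integrals of the pendulum separatrices. Particular care is needed for condition \textbf{(B)}, which is a \emph{global} non-vanishing requirement along orbits of the (for $\vep\neq0$, no longer purely rotational) inner dynamics on $\tLa_\vep$, and for the uniformity of all transversality bounds as $\vep\to0$, without which the threshold $\vep_0(H_1)$ and displacement $\rho(H_1)$ cannot be extracted. Verifying quantitatively that the chosen finite set of trigonometric modes produces Melnikov coefficients simultaneously non-vanishing on the relevant resonance set in $(I,\vph,t)$-space, and pushing those bounds uniformly through the NHIM perturbation theory for small $\vep$, is where I expect the main technical difficulty to lie.
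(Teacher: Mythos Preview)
Your overall reduction is correct and matches the paper's: diffusion follows from Theorem~\ref{existence_orbits} once one verifies that the Melnikov potential $L$ satisfies the two conditions you call \textbf{(A)} and \textbf{(B)} (the paper's \textbf{(H3a)} and \textbf{(H3b)}), and openness in $C^3$ (hence in $C^\omega_\kappa$) is immediate. The execution for density, however, differs. The paper does \emph{not} run a Sard--Smale argument over a large trigonometric family; it gives a fully explicit, low-dimensional construction. For \textbf{(A)}: add $\delta_2\sum_i f_i(q_i)\cos(t+b_i)$, compute that this contributes $\delta_2\sum_i C_i\cos(s-\tau_i+b_i+\alpha_i)$ to $L$, use the boundedness of $L$ to show $\inf_\tau\|\partial_\tau L\|=0$ (Lemma~\ref{Lem_inf_0}), and choose the phases $b_i$ to create a critical point; then add $\delta_3\sum_i g_i(q_i)\cos(t+c_i)$ with phases tuned so the added Hessian is diagonal with nonzero entries, making $\det\partial_\tau^2 L$ a nontrivial polynomial in $\delta_3$. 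For \textbf{(B)}: add $\delta\,F(q_1)\sum_j\cos(\vph_j+c_j)$, expand $\tau^{*,\delta}=\tau^*+O(\delta)$ to get $(\cL^\delta)^*=\cL^*+\delta\,\widetilde L(\tau^*,I,\theta,0)+O(\delta^2)$, and choose the phases $c_j$ to make $\partial_\theta\widetilde L(\tau^*,\hat I,\hat\theta,0)\neq 0$. The only analytic input is Lemma~\ref{lem_nonzero}: for each $a\neq 0$ some $f\in C^\omega_\kappa(\T)$ has $\int_{-\infty}^\infty[f(q_l^0(t))-f(0)]e^{iat}\,dt\neq 0$, proved by testing against a $C^1$ bump on a short arc of $\T$ (a bump is harmless here since it is a test function on $\T$, not a perturbation of $H_1$). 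Your abstract scheme could work, but the paper's route replaces the submersion verification in Sard--Smale by elementary hand calculations on perturbations depending on only one or two variables at a time.

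Two points where your proposal overstates the difficulty. First, \textbf{(B)} is \emph{pointwise}, not a global non-vanishing condition along inner-dynamics orbits: one needs only $\partial_\theta\cL^*(I_0,\theta_0)\neq 0$ at a \emph{single} point $(I_0,\theta_0)$; the mechanism of \cite{Gidea_Delallave_seara2020} then needs nothing from the inner map beyond Poincar\'e recurrence, which is automatic (or else the NHIM itself already carries diffusing orbits). Second, there is no $\vep$-uniformity issue to push through: $L$ is computed from the \emph{unperturbed} separatrices and does not involve $\vep$, so \textbf{(A)}--\textbf{(B)} are $\vep$-independent, and once they hold for a given $H_1$ the constants $\vep_0(H_1)$ and $\rho(H_1)$ drop out of Theorem~\ref{existence_orbits} directly.
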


\begin{Rem}
The above result states that we can find  one diffusing orbit whose 
initial condition $I(0)$ in the action space is just some point in the neighborhood $V_{I_0}$. This initial condition $I(0)$  in general may not be
  $I_0$.
To construct diffusion orbits one needs to pose some hypotheses on $H_1$, see \textbf{(H3a)-(H3b)} in Section \ref{section_mechanism}. 
Moreover,
	as we will show in Section \ref{section_proof}, the set of $H_1$ satisfying \textbf{(H3a)-(H3b)}  is indeed  $ C^\omega_\kappa$ dense and $C^3$ open.  These conditions of genericity in $H_1$ are rather explicit. 
They are given by conditions (which only require finite precision calculation) on a rapidly convergent integral. Hence, they can be verified in concrete models (e.g. in celestial mechanics). 
	Also, we point out that $\rho$ does not depend on $\varepsilon$, so Theorem \ref{Main_Thm1}  implies that for generic systems, 
	through an arbitrarily small neighborhood of a given point there passes a  trajectory whose action coordinates  go away from the initial values by  $O(1)$ with respect to the size of the perturbation. 
\end{Rem}
\begin{Rem}
$\kappa$ stands for the size of analytic extension.
It is worth noting that our result of analytic genericity holds for any $\kappa>0$.
\end{Rem}
\begin{Rem}\label{rem_smallV}
As we will see from the proof in the following sections,  our method also 
allows that $V_i$ are weakly hyperbolic, that is, $V_i$ can be replaced by $\delta V_i$ for a small $\delta>0$, and the perturbation parameter $\vep\ll \delta$. This is similar to Arnold's example \cite{Ar1964}.
Of course, in that case the threshold value $\vep_0(H_1)\ll \delta$
shrinks to zero as $\delta$ tends to zero.    This is, very typical of
the approches to diffusions near integrable systems. Of course one
expects that making $\vep$ larger will generate more diffusion. 
\end{Rem}

We can also interpret the  genericity result  stated above for system $H_0+H_1$ without using the parameter $\vep$. More precisely, let  $\mathfrak{S}$ be the unit sphere in the space $(C^\omega_\kappa, \|\cdot\|_\kappa)$ with $\kappa>0$. Then there exists a non-negative function $\epsilon_0: \mathfrak{S}\to [0,+\infty)$ taking positive values on an open-dense subset of $\mathfrak{S}$, such that for each $H_1$ in the $\epsilon_0$-ball
\[\mathfrak{B}=\Big\{\lambda P ~\Big|~ P\in \mathfrak{S},~\lambda\in \big(0,\epsilon_0(P)\big) \Big\},\]
the Hamiltonian $H_0+H_1$ admits Arnold diffusion.

We mention that our genericity result can  extend to the Hamiltonians of the form   $H_0(p,q,I)$ $+$ $\vep H_1(p,q,I,\vph,t;\vep)$ where $H_1(p,q,I,\vph,t;\vep)$ also depends analytically on the parameter $\vep$. 
In fact, our geometric method uses mainly the first-order analysis. The conditions \textbf{(H3a)-(H3b)}, see Section \ref{section_mechanism}, imposed on $H_1$  only involve the properties of $H_1(p,q,I,\vph,t;0)$.  Similar  discussions can also be found in works such as \cite{Gidea_Delallave_seara2020, Gidea_Delallave_seara2020_skip}. 

Let $C^\omega:=\bigcup_{\kappa>0}C^\omega_\kappa$. It is exactly the set of all  bounded real analytic functions on $\tcM$. Note that $C^\omega$ is a Fr\'echet space, we then have the following immediate consequence:
\begin{Cor}\label{Main_Cor2}
Given a point $I_0\in\cB$ and a small neighborhood $V_{I_0}$ of $I_0$. Then there exists an open and dense set $\cV\subset C^\omega$, and for each $H_1\in \cV$ we can find $\vep_0=\vep_0(H_1)>0$ and $\rho=\rho(H_1)>0$ satisfying the following property: for each $\vep\in(-\vep_0,\vep_0)\setminus\{0\}$, the Hamiltonian flow of $\HH_\vep=H_0+\vep H_1$ admits a trajectory whose action variables $I(t)$  satisfy
\[\sup_{t>0}\|I(t)-I(0)\|\geq \rho,\]
where the initial condition $I(0)\in V_{I_0}$.
\end{Cor}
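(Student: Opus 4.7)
The plan is to reduce Corollary \ref{Main_Cor2} to Theorem \ref{Main_Thm1} applied at every level of complex extension $\kappa>0$, and then to assemble the resulting family of Banach-level open-dense sets into a single set in $C^\omega$. Concretely, for each $\kappa>0$ let $\cU_\kappa\subset C^\omega_\kappa$ be the open-dense set supplied by Theorem \ref{Main_Thm1}, and define
\[
\cV := \bigcup_{\kappa>0}\cU_\kappa \;\subset\; C^\omega.
\]
It would then remain to verify that this $\cV$ is simultaneously dense and open in the Fr\'echet (inductive-limit) topology on $C^\omega$.

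Density is the easy direction. Given an arbitrary $H_1\in C^\omega$, by the very definition of $C^\omega=\bigcup_{\kappa>0}C^\omega_\kappa$ one has $H_1\in C^\omega_\kappa$ for some particular $\kappa>0$. I would then apply Theorem \ref{Main_Thm1} at that value of $\kappa$ to produce a sequence $H_1^{(n)}\in\cU_\kappa\subset \cV$ with $\|H_1^{(n)}-H_1\|_\kappa\to 0$. Because the inclusion $C^\omega_\kappa\hookrightarrow C^\omega$ is continuous, this is already convergence in $C^\omega$, so $\cV$ is dense.

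Openness is the step where I expect the only genuine subtlety. Here I would lean on the strengthened form of Theorem \ref{Main_Thm1} recorded in the first Remark following it: each $\cU_\kappa$ is in fact \emph{$C^3$-open}, not merely open in the analytic norm. Since $\tcM$ is compact and Cauchy estimates on the complex extension yield $\|f\|_{C^3(\tcM)}\lesssim \kappa^{-3}\|f\|_\kappa$, the inclusion $C^\omega_\kappa\hookrightarrow C^3(\tcM)$ is continuous for every $\kappa$; therefore $\cV\cap C^\omega_\kappa$ is open in $C^\omega_\kappa$ for every $\kappa$, and the universal property of the inductive-limit topology promotes this to openness of $\cV$ in $C^\omega$.

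The main obstacle, to my mind, is almost entirely a bookkeeping one: one must first pin down precisely which topology is being put on the non-Banach space $C^\omega$, since without fixing that the words ``open'' and ``dense'' are ambiguous. Once this choice is made explicit, no fresh dynamical input beyond Theorem \ref{Main_Thm1} is needed, and the corollary follows formally from the two topological reductions above together with standard Cauchy estimates; the numbers $\vep_0(H_1)$ and $\rho(H_1)$ are simply inherited from the layer $\cU_\kappa$ in which $H_1$ lies.
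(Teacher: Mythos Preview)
Your proposal is correct and follows essentially the same route as the paper: the paper's proof is the single sentence that $C^\omega=\bigcup_{m\in\Z^+}C^\omega_{1/m}$ and that the result then follows from Theorem~\ref{Main_Thm1} ``by the definition of Fr\'echet topology,'' whereas you have simply unpacked that sentence into the standard density-and-openness argument for an inductive limit, invoking the $C^3$-openness from the Remark after Theorem~\ref{Main_Thm1} to handle the open part. No additional dynamical input is used in either version.
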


As we will show in  Section \ref{section_proof}, the genericity 
is verified by taking advantage of perturbation functions depending only on $(q,\varphi, t)$. Thanks to the work of \cite{Gidea_Delallave_seara2020}, we
will see that two hypotheses formulated below as {\bf (H3a), (H3b)}
for a specific integral, imply diffusion. Hence, for us,
it suffices to show  that  {\bf (H3a), (H3b)} are generic. 
Therefore, we can even establish the genericity 
\emph{in the sense of Ma\~n\'e} \cite{Mane1996}, namely, the diffusive phenomenon occurs under generic periodic potential perturbations. More precisely, we denote by $C_\kappa^\omega(\T^{n+d+1})$ the set of all real analytic functions  which can extend analytically to the complex neighborhood 
$\{(q,\vph,t)\in\CC^{n+d+1}/(2\pi\Z)^{n+d+1} : |\textup{Im}~\iota|<\kappa, \iota=q_i,\vph_i,t\}.$ 
Then we have

\begin{The}\label{Main_Thm2}
Given $\kappa>0$, $I_0\in\cB$ and a small neighborhood $V_{I_0}$ of $I_0$. Then there exists an open and dense set $\cW\subset C^\omega_\kappa(\T^{n+d+1})$,  and for each  $P\in \cW$ we can find $\vep_0=\vep_0(P)>0$ and $\rho=\rho(P)>0$  satisfying the following property: for any $\vep\in(-\vep_0,\vep_0)\setminus\{0\}$, the Hamiltonian flow of $\HH_\vep=H_0+\vep P$  admits a trajectory  whose action variables $I(t)$  satisfy
\[\sup_{t>0}\|I(t)-I(0)\|\geq \rho,\]
where the initial condition $I(0)\in V_{I_0}$.
\end{The}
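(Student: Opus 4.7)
The plan is to reduce Theorem~\ref{Main_Thm2} to verifying that the two Melnikov-type transversality conditions labelled \textbf{(H3a)} and \textbf{(H3b)} (following the convention announced just above the statement) are generic within the restricted class of potential perturbations $P(q,\vph,t)\in C^\omega_\kappa(\T^{n+d+1})$. Once these are established on an open and dense subset of that space, the geometric mechanism of \cite{Gidea_Delallave_seara2020} produces the diffusing orbit with the claimed estimate $\sup_{t>0}\|I(t)-I(0)\|\geq\rho$. The key simplification afforded by restricting to potentials is that the resulting Melnikov integral depends \emph{linearly} on $P$ and admits a rapidly convergent Fourier expansion, which turns the transversality check into an essentially finite-dimensional problem.

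The first step is to write the reduced Poincar\'e--Melnikov potential explicitly. Let $q^h(s)=(q^h_1(s),\dots,q^h_n(s))$ be a product homoclinic orbit of the unperturbed pendulums to their hyperbolic equilibrium at $q=0$, which exists by \textbf{(H2)}. Since $P$ does not depend on $p$ or $I$, a computation along the unperturbed flow on the NHIM $\{p=0,\,q=0\}$ gives
\[
\cL(I,\vph,t)=\int_{-\infty}^{\infty}\bigl[P\bigl(q^h(s),\vph+\omega(I)s,\,t+s\bigr)-P\bigl(0,\vph+\omega(I)s,\,t+s\bigr)\bigr]\,ds,
\]
with $\omega(I)=\partial h/\partial I$. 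The integrand decays exponentially because $q^h(s)\to 0$ exponentially, so $\cL$ inherits analyticity from $P$ on a complex neighborhood of $\cB\times\T^d\times\T$. Inserting the Fourier series of $P$ on $\T^{n+d+1}$ and integrating termwise produces a convergent expansion in which each Fourier mode of $P$ contributes an independent elementary term to $\cL$; the exponential decay of $q^h$ controls the convergence uniformly on the analytic strip of width~$\kappa$.

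The genericity of \textbf{(H3a)}, \textbf{(H3b)} is then a standard transversality scheme, carried out in the analytic category. Openness in the $C^3$ topology is immediate: the hypotheses ask for finitely many non-degenerate critical points of $\cL$ together with non-vanishing of certain explicit determinants, and these depend continuously on a fixed finite number of derivatives of $P$. For density, given an arbitrary $P_0\in C^\omega_\kappa(\T^{n+d+1})$, I would add a trigonometric polynomial
\[
\delta P(q,\vph,t)=\sum_{|k|+|\ell|+|m|\leq N} c_{k,\ell,m}\,e^{\,i(k\cdot q+\ell\cdot\vph+m\,t)}
\]
with a sufficiently rich collection of modes and very small real coefficients $c_{k,\ell,m}$ (paired to keep $\delta P$ real-valued). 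Because $\cL$ depends linearly on these coefficients, a Sard-type argument on the finite-dimensional parameter space $\{c_{k,\ell,m}\}$ shows that the bad locus (where either \textbf{(H3a)} or \textbf{(H3b)} fails) has zero measure there. Since trigonometric polynomials are dense in $(C^\omega_\kappa(\T^{n+d+1}),\|\cdot\|_\kappa)$, arbitrarily small analytic perturbations suffice, and then $\vep_0(P)$ and $\rho(P)$ are obtained from the quantitative version of \cite{Gidea_Delallave_seara2020}.

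The main obstacle will be the last step: one must select the finite family of Fourier modes so that the map from coefficients $\{c_{k,\ell,m}\}$ to the relevant jet of $\cL$ at each candidate critical point is submersive. This amounts to ruling out simultaneous vanishing of the Melnikov coefficients (oscillatory integrals of $q^h$-dependent kernels against trigonometric functions) across the chosen modes, which in turn relies on the Morse non-degeneracy \textbf{(H2)} of the $V_i$ to guarantee that these integrals do not all degenerate. Paley--Wiener-type bounds, combined with the exponential convergence of the homoclinic, ensure that the admissible perturbations remain in $C^\omega_\kappa$ with controlled norm, so the construction survives the transition from $C^r$ to $C^\omega$; this is the crucial point where the analytic result improves on the purely $C^r$-differentiable arguments that appear in the literature. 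The resulting failure set is of infinite codimension in $C^\omega_\kappa(\T^{n+d+1})$, in the sense of Remark~\ref{rem:codimension}.
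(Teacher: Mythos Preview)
Your high-level reduction is correct and matches the paper: Theorem~\ref{Main_Thm2} follows from Theorem~\ref{existence_orbits} once \textbf{(H3a)} and \textbf{(H3b)} are shown to be $C^\omega_\kappa$-dense and $C^3$-open within the potential class $C^\omega_\kappa(\T^{n+d+1})$. However, the paper's proof is far shorter than what you outline. It does not re-run any transversality or Sard-type argument in the potential space at all; instead it simply observes that the explicit perturbations already constructed in the proofs of Theorem~\ref{H3a_genericity} and Theorem~\ref{H3b_genericity} (namely $\sum_i f_i(q_i)\cos(t+b_i)$ and $F(q_1)\sum_i\cos(\vph_i+c_i)$) are functions of $(q,t)$ and $(q,\vph)$ only, hence already lie in $C^\omega_\kappa(\T^{n+d+1})$. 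So density in the smaller space is inherited for free from the constructions done for the full space, and the remaining argument is identical to that of Theorem~\ref{Main_Thm1}.

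Your proposed route via a generic trigonometric polynomial and a Sard argument is plausible in spirit but has a genuine gap precisely at the point you flag as ``the main obstacle'': you never verify the submersivity of the map from Fourier coefficients to the relevant jet of the Melnikov potential. That step is not automatic, and in the paper it is the content of Lemma~\ref{lem_nonzero}, which shows (by a short contradiction argument using a bump function supported on a small arc of the homoclinic) that for each nonzero frequency one can choose an analytic $f\in C^\omega_\kappa(\T)$ with $\int_{-\infty}^\infty [f(q_i^0(t))-f(0)]e^{iat}\,dt\neq 0$. Without this, your Sard scheme cannot get off the ground. Two further issues: your displayed formula for $\cL$ omits the time-shift parameter $\tau=(\tau_1,\dots,\tau_n)\in\R^n$ along the product homoclinic, which is exactly the variable in which \textbf{(H3a)} asks for a non-degenerate critical point; and the Paley--Wiener remark is unnecessary, since trigonometric polynomials trivially lie in every $C^\omega_\kappa$.
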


We end this section by giving a concluding  remark on our result and approach. 
\begin{enumerate} 
\item Our perturbative technique for the genericity is valid in both the $C^r$-differentiable $(3\leq r\leq \infty)$ and the $C^\omega$ topologies. Also, it  applies to the genericity in the sense of Ma\~n\'e.
	\item   The unperturbed part $H_0$ is only needed to be $C^r$ smooth with $r\geq 3$.  
	  We do not require the inner dynamics to satisfy a twist condition, and the diffusion mechanism used in the present paper only relies on the outer dynamics since invariant objects (e.g. primary KAM tori,  Aubry-Mather sets) of the inner map are not used at all.
	\item Both the phase space of the rotator and the phase space of the pendulums can be of arbitrary dimensions. 
	 \end{enumerate}

\subsection{Organization of the paper}
In Section \ref{section_mechanism}, we first review the
results we use on the normally hyperbolic invariant manifolds
and the scattering maps for the \textit{a priori} unstable
system \eqref{main_H_system}.
Then, we review the geometric
program established in \cite{Gidea_Delallave_seara2020}. It
allows us to obtain Arnold diffusion for the original
dynamics by shadowing the pseudo-orbits of the scattering
map. We provide more details for this geometric mechanism in
Appendix \ref{Appendix_section} for the reader's convenience.
Section \ref{section_proof} is devoted to the proofs of our
results on analytic genericity.  Appendix
\ref{appendix_Nor_hyp_theory} and Appendix
\ref{appendix_scattering_map} give general introductions to
the theory of NHIMs and the theory of scattering maps.
The perturbative argument to break the possible degeneracies of
the conditions in \cite{Gidea_Delallave_seara2020} is 
described in Section~\ref{section_proof}.

\section{Scattering maps and geometric mechanism of Arnold diffusion}\label{section_mechanism}
The main characteristic of an \textit{a priori} unstable Hamiltonian system is that there exists  a normally hyperbolic invariant manifold (NHIM)  with unstable and stable invariant manifolds.
The presence of these invariant objects plays an important role in the Arnold diffusion problem.  The scattering map of the NHIM is an effective tool to quantify the homoclinic excursions. This map associates the orbit asymptotic in the past to the orbit asymptotic in the future. 
Using the perturbation theory and the Melnikov method one can estimate  the effect of the perturbation on all the variables of the scattering map.
See Appendix \ref{appendix_Nor_hyp_theory} and Appendix 
\ref{appendix_scattering_map} for general introductions.

In this section, we first give some important results on the NHIM and the scattering map for our \textit{a priori} unstable system $\HH_\vep=H_0+\vep H_1$. Then, we review a recent geometric mechanism of Arnold diffusion established in \cite{Gidea_Delallave_seara2020}. 

Recall that $\HH_\vep$ satisfies conditions \textbf{(H1)--(H2)}.  
From now on, it is convenient to fix two closed balls (suitably large) $\cD_*\subset \cD$ and $\cB_*\subset \cB$, and  study the dynamics  on the following domain
\[(p, q, I, \vph)\in \cD_*\times\T^n\times\cB_*\times\T^d.\]

\subsection{Normal hyperbolicity of the unperturbed system}
As the unperturbed system $H_0$ is given by
\begin{equation*}
H_0(p,q,I)=h(I)+\sum_{i=1}^n \pm\Big[\frac{1}{2}p_i^2+V_i(q_i)\Big],
\end{equation*}
we use $\Phi_{t,0}$ to denote the corresponding autonomous $C^{r-1}$ Hamiltonian flow on $\cM=\cD\times\T^n\times\cB\times\T^d$. Here, the subscript ``0" represents $\vep=0$. 

Condition \textbf{(H2)} implies that each pendulum $\frac{1}{2}p_i^2+V_i(q_i)$   has two homoclinic orbits. For each $i$,  we choose and fix one homoclinic orbit  $(p_i^0(t)$, $q_i^0(t))$. It converges exponentially to the hyperbolic equilibrium $(0,0)$ with characteristic exponent 
\begin{equation}\label{char_exp}
	\lambda_i:=\sqrt{-V_i''(0)}>0.
\end{equation}
This is equivalent to saying 
\[\text{dist}\Big(\big(p_i^0(t), q_i^0(t)\big), \big(0,0\big)\Big)\leq C e^{-\lambda_i|t|},\quad\text{as}~ t\longrightarrow\pm \infty.\]

The autonomous flow $\Phi_{t,0}$  has a $2d$-dimensional invariant manifold $\La_0$ with boundary,
\begin{align*}
	\La_0=\{(0,0,I,\vph)~:~ (I,\vph)\in\cB_*\times\T^d\}.
\end{align*}
$\La_0$ is foliated completely by invariant tori, and hence the dynamics restricted on $\La_0$ is integrable. 
Sometimes, we need to work in the extended space $\tcM=\cM\times\T$, which yields a $(2d+1)$-dimensional manifold  
\[\tLa_0=\{(0,0,I,\vph, t)~:~ (I,\vph,t)\in\cB_*\times\T^d\times\T\} \subset \tcM.\]
$\tLa_0$ is invariant under the extended flow $\tPh_{t,0}$.  

$\tLa_0$ is a normally hyperbolic invariant manifold (see Appendix \ref{appendix_Nor_hyp_theory}). To verify it, we use \eqref{char_exp} and take the normal exponents  
\begin{align}\label{exponents_H_0}
	\lambda_s=-\max_{i=1,\cdots,n}\lambda_i, \quad \mu_s=-\min_{i=1,\cdots,n}\lambda_i;\quad \lambda_u=\min_{i=1,\cdots,n}\lambda_i, \quad \mu_u=\max_{i=1,\cdots,n}\lambda_i,
\end{align}
For the central exponents, we can take $-\lambda_c=\mu_c$ with the positive exponent $\mu_c$ as close as desired to the value $0$ since the dynamics on $\tLa_0$ is completely integrable.   Consequently, for every $\tilde{x}\in\tLa_0$ we have the invariant splitting of the tangent bundle
$T_{\tilde{x}}\tcM=T_{\tilde{x}}\tLa_0\oplus E_{\tilde{x}}^s\oplus E_{\tilde{x}}^u$, and
\begin{align}\label{expansion_contraction_rate}
    v\in E_{\tilde{x}}^s  & \Longleftrightarrow C^{-1}e^{t\lambda_s }\|v\|\leq\|D\tPh_{t,0}(\tilde{x})v\|\leq Ce^{t\mu_s }\|v\|,  \quad t\geq 0,\nonumber \\
    v\in E_{\tilde{x}}^u & \Longleftrightarrow  C^{-1}e^{t\mu_u }\|v\|\leq  \|D\tPh_{t,0}(\tilde{x})v\|\leq Ce^{t\lambda_u }\|v\|,  \quad t\leq 0,\\ 
    v\in T_{\tilde{x}}\tLa_0 & \Longleftrightarrow C^{-1}e^{|t|\lambda_c }\|v\| \leq \|D\tPh_{t,0}(\tilde{x})v\|\leq Ce^{|t|\mu_c }\|v\| ,  \quad t\in\R, \nonumber
\end{align}
where  the constant $C>1$. The stable (resp. unstable) space $E_{\tilde{x}}^{s}$  (resp. $E_{\tilde{x}}^{u}$) is just the direct sum of the stable (resp. unstable) spaces at the hyperbolic equilibrium of each pendulum. 
In particular,  $\La_0$ is also a NHIM of the flow $\Phi_{t,0}$, with the same exponents $\lambda_s\leq\mu_s<$ $\lambda_c<0<\mu_c$ $<\lambda_u\leq \mu_u$.

On the other hand,  there is also a family of homoclinic orbits parameterized by
\begin{equation}\label{homo_orbits}
	\begin{split}
		p^0(\tau+t\bar{1})=\big(p_1^0(\tau_1+t),\cdots,p_n^0(\tau_n+t)\big),\qquad
q^0(\tau+t\bar{1})=\big(q_1^0(\tau_1+t),\cdots,q_n^0(\tau_n+t)\big),
	\end{split}
\end{equation}
where $\tau=(\tau_1,\cdots,\tau_n)\in\R^n$ and $\bar{1}=(1,\dots,1)\in\R^n$. Each parameter $\tau_i$, $i\in\{1,\cdots, n\}$, represents the time shift for the $i$-th homoclinic orbit.  $\big(p^0(\tau+t\bar{1}),q^0(\tau+t\bar{1})\big)$ is asymptotic to  $(0, 0)$ in the future with an exponential rate at least $\mu_s$, and in the past with an exponential rate  at least $\lambda_\mu$. Moreover, these homoclinic orbits form the stable  manifold $W^s_{\tLa_0}$ and the unstable manifold $W^u_{\tLa_0}$ of the NHIM $\tLa_0$. In particular,  the unstable and stable manifolds coincide, that is $W^s_{\tLa_0}=W^u_{\tLa_0}$.

\subsection{Persistence of normally hyperbolic invariant manifolds} 
In the theory of normally hyperbolic invariant manifolds,  it is well known that the NHIM
  along with its stable and unstable manifolds persist under small perturbations  \cite{Fen1971,Fen1979,HPS1977}. In general, the NHIM will only be finitely differentiable. The optimal regularity depends on the ratio of the normal exponents and the central exponents.
  
 For the perturbed system $\HH_\vep=H_0+\vep H_1$ with $\vep\neq 0$, we have the non-autonomous Hamilton's equations \eqref{Haml_eq}.  By supplementing equations \eqref{Haml_eq} with the equation $\dot{s}=1$, 
we can  consider the extended flow, denoted as $\tPh_{t,\vep}$, associated with the Hamiltonian $\HH_\vep(p,q,I, \vph, s)$. Then, $\tPh_{t,\vep}$ becomes a $C^{r-1}$ autonomous flow on $\tcM$.
  
  Following Appendix \ref{appendix_subsub_smooth}, we set
  \begin{equation}\label{regularity_index}
	 \ell=\min\{\ell_u,\ell_s\}
\end{equation}
where
\[\ell_u=\max\left\{k=1,\cdots,r-1~:~k<\frac{\mu_s}{\lambda_c}\right\}   \quad\text{and}\quad
	\ell_s=\max\left\{k=1,\cdots,r-1~:~k<\frac{\lambda_u}{\mu_c}\right\}
\]
 and the exponents $\lambda_s\leq\mu_s<$ $\lambda_c<0<\mu_c$ $<\lambda_u\leq \mu_u$ are given in \eqref{expansion_contraction_rate}.

\begin{Rem} 
Note that  $\ell_s$ and $\ell_u$  are  only finite even when $r=\infty$
 or $\omega$. Taking the central  exponents $\lambda_c$ and $\mu_c$ sufficiently small if necessary, we can always let the indices $\ell_s\geq 2, \ell_u\geq 2$, and hence \[\ell\geq 2.\]
 In particular, in the case of $r\in[3,\infty)$, we can have $\ell=\ell_s=\ell_u=r-1$ for $\vep$ sufficiently small since $\lambda_c$ and $\mu_c$ can be chosen as close as desired to $0$.

 The argument of \cite{Gidea_Delallave_seara2020}, is a transversality
 argument that only requires a few derivatives of the invariant manifolds
 and the perturbations involved. 
 
 \end{Rem}
  
\begin{Pro}\label{Pro_NHIM_H_ep}
Let $\HH_\vep$ satisfy  conditions {\bf(H1)--(H2)}. Then there exists $\vep_0>0$ sufficiently small such that for each $\vep\in(-\vep_0,\vep_0)$, the flow $\tPh_{t,\vep}$ has a normally hyperbolic locally invariant manifold $\tLa_\vep$ with the associated stable manifold $W^{s}_{\tLa_\vep}$ and unstable manifold $W^{u}_{\tLa_\vep}$. 
Moreover, the manifolds $\tLa_\vep$ and $W^{u,s}_{\Lambda_\vep}$ are  $C^{\ell}$ differentiable with the index $\ell$ given in \eqref{regularity_index}, and $\tLa_\vep$ is diffeomorphic to $\tLa_0$.
\end{Pro}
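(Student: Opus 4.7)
The plan is to deduce the proposition from the standard persistence theorem for normally hyperbolic locally invariant manifolds (Fenichel \cite{Fen1971,Fen1979}, Hirsch--Pugh--Shub \cite{HPS1977}) reviewed in Appendix \ref{appendix_Nor_hyp_theory}, applied to the autonomous extended flow $\tPh_{t,\vep}$ viewed as a small perturbation of $\tPh_{t,0}$.

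First I would check that the hypotheses of the persistence theorem are satisfied for $\vep=0$. The invariant splitting $T_{\tilde x}\tcM=T_{\tilde x}\tLa_0\oplus E^s_{\tilde x}\oplus E^u_{\tilde x}$ is already exhibited explicitly in \eqref{expansion_contraction_rate}: the central subbundle is tangent to the integrable torus directions $(I,\vph,s)$, while $E^s_{\tilde x}$ and $E^u_{\tilde x}$ are the direct sums of the stable/unstable subspaces at the hyperbolic saddles $(p_i,q_i)=(0,0)$ of the individual pendulums. Since the unperturbed inner dynamics on $\tLa_0$ is completely integrable, the central exponents $\lambda_c,\mu_c$ can be chosen as close to $0$ as desired, so the spectral gap $\mu_s<\lambda_c<0<\mu_c<\lambda_u$ and the rate conditions needed to define $\ell$ in \eqref{regularity_index} are comfortably satisfied; in particular one may arrange $\ell\geq 2$, and for $r$ finite even $\ell=r-1$. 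Because $\cB_*$ is a closed ball, $\tLa_0$ has boundary, so one should work in the locally invariant category from the outset.

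Next I would observe that, since $H_1\in C^r(\tcM)$ with $r\geq 3$, the Hamiltonian vector field of $\HH_\vep$ differs from that of $H_0$ by $O(\vep)$ in the $C^{r-1}$ topology on the compact set $\cD_*\times\T^n\times\cB_*\times\T^d\times\T$. Hence $\tPh_{t,\vep}$ is a $C^{r-1}$-small perturbation of $\tPh_{t,0}$ on any bounded time interval, which is the input required by the persistence theorem. Applying the theorem then yields, for all $|\vep|<\vep_0$, a locally invariant manifold $\tLa_\vep$ that is $C^\ell$-close and $C^\ell$-diffeomorphic to $\tLa_0$, together with $C^\ell$ local stable and unstable manifolds $W^{s,u}_{\tLa_\vep}$ that are $C^\ell$-close to $W^{s,u}_{\tLa_0}$. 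The regularity index $\ell$ is precisely the one given in \eqref{regularity_index}, since in Fenichel/HPS theory the smoothness of the invariant manifold and of its (un)stable foliation is limited exactly by the ratios $\mu_s/\lambda_c$ and $\lambda_u/\mu_c$ between normal and central exponents.

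The main obstacle I anticipate is purely bookkeeping rather than conceptual: one must handle the boundary of $\tLa_0$ carefully (only \emph{local} invariance survives, since orbits can in principle exit through $\partial(\cB_*\times\T^d\times\T)$), and one must make the choice of the central exponents $\lambda_c,\mu_c$ consistently small so that the ratios $\mu_s/\lambda_c$ and $\lambda_u/\mu_c$ are both strictly greater than $2$, guaranteeing $\ell\geq 2$ uniformly in $\vep$. Once these choices are fixed, the conclusion of the proposition is the direct output of the classical persistence theorem.
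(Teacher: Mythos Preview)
Your proposal is correct and follows essentially the same route as the paper's own argument: verify normal hyperbolicity of $\tLa_0$ for the unperturbed flow, then invoke the Fenichel/HPS persistence theorem, with the regularity governed by the ratios in \eqref{regularity_index}. The only place where the paper is more explicit is the boundary handling you flag as ``bookkeeping'': rather than invoking a locally invariant version of the persistence theorem directly, the paper carries out the standard bump-function extension---multiplying $H_1$ by a cutoff $\rho(p,I)$ so that the modified Hamiltonian $G_\vep=H_0+\vep\rho H_1$ has a boundaryless NHIM $\tLa_{G_0}=\{(0,0,I,\vph,s):(I,\vph,s)\in\R^d\times\T^d\times\T\}$, applies Theorem~\ref{appendix_persistence} to $G_\vep$, and then restricts to the region where $G_\vep=\HH_\vep$ to obtain the locally invariant $\tLa_\vep$.
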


We give a sketch of the proof of Proposition \ref{Pro_NHIM_H_ep} for the reader's convenience. We also refer to  \cite{DLS2006, DLS2016, Gidea_delaLlave2018} for more details.

To prove Proposition \ref{Pro_NHIM_H_ep}, we first point out that the NHIM $\tLa_0$ of the unperturbed equations has non-empty boundary on which the flow $\tPh_{t,0}$ is invariant, but the invariance on the boundary will be destroyed under perturbations in general. Then, just as pointed out in \cite{Fen1971,Fen1977}, a standard treatment is to  consider a slightly modified Hamiltonian.  
More precisely, we take two open domains $U_1$ and $U_2$ close enough to $\cD$ and $\cB$ respectively, and
 $\cD_*\subset U_1\subset\cD$ and $\cB_*\subset U_2\subset\cB$. Let
 $\rho(p, I): \R^n\times\R^d\longrightarrow [0,1]$ be a $C^\infty$ smooth bump function such that $\rho|_{U_1\times U_2}\equiv 1$, and $\rho(p, I)=0$ for those points $(p, I)$ outside $\cD\times\cB$. 
 Then we define the modified Hamiltonian $G_\vep$ as follows
 \begin{align*}
 	G_\vep:= H_0+\vep \rho H_1.
 \end{align*}
 Clearly,  $G_\vep=H_0$ for $(p, I)\notin\cD\times\cB $,  
 and $G_\vep=\HH_\vep$ on $U_1\times\T^n\times U_2\times\T^d\times\T$. 
We use $\tPh_{t, G_\vep}$ to denote the associated Hamiltonian flow.
 Then, for $\vep=0$  the   manifold
$\tLa_{G_0}=\{(0,0,I,\vph, s)~:~ (I,\vph,s)\in\R^d\times\T^d\times\T\}$
is normally hyperbolic and invariant under the flow $\tPh_{t,G_0}$. Note that $\tLa_{G_0}$ has no boundary. Thus we can apply Theorem \ref{appendix_persistence} to the perturbed system $G_\vep$ to obtain a unique NHIM $\tLa_{G_\vep}$ of the flow $\tPh_{t, G_\vep}$.

In general, the manifold $\tLa_{G_\vep}$ constructed in this way  depends on the nature of the modification on the boundary. Anyway, orbits that never pass through the modified region behave identically to those of the unmodified Hamiltonian $\HH_\vep$.  
Since $\tPh_{t, G_\vep}$ agrees with the original flow $\tPh_{t, \vep}$ on the domain $U_1\times\T^n\times U_2\times\T^d\times\T$, we therefore obtain a normally hyperbolic manifold $\tLa_{\vep}$ that is locally invariant under $\tPh_{t, \vep}$. Note that $\tLa_{\vep}$  is in general not unique, as its construction depends on the modified Hamiltonian $G_\vep$. Nevertheless, any one of them can be used to prove our following results because the conditions {\bf(H1)--(H2)} are only on the unperturbed part $H_0$.   The choice only affects the smallness of $\vep_0$. See also \cite{DLS2008} for more discussion. 
 
The next step is to check  the smoothness of $\tLa_\vep$. Observe that
 $\tLa_\vep$ is normally hyperbolic with slight changes on the normal and central exponents given in \eqref{expansion_contraction_rate}. We denote the perturbed exponents by
 \begin{equation}\label{per_exponents}
 	\lambda_{s,\vep}\leq\mu_{s,\vep}<\lambda_{c,\vep}<0<\mu_{c,\vep}<\lambda_{u,\vep}\leq \mu_{u,\vep}.
 \end{equation}
 They are $O(\vep)$-close to those in \eqref{expansion_contraction_rate}. This implies that the index $\ell$ defined in \eqref{regularity_index} would remain unchanged as long as $\vep$ is small enough.  Hence, $\tLa_\vep$ is  $C^{\ell}$ smooth.

Finally, with the persistent manifold $\tLa_\vep$,  we obtain the local stable manifold $W^{s,loc}_{\tLa_\vep}$ and the local unstable manifold $W^{u,loc}_{\tLa_\vep}$, which can be prolonged to  $W^{s}_{\tLa_\vep}$ and  $W^{u}_{\tLa_\vep}$, respectively. This finishes the proof of Proposition \ref{Pro_NHIM_H_ep}.

\subsection{Transversal intersections and scattering maps}
The theory of scattering maps is developed to quantify the homoclinic excursions. To define the scattering map,  some transversal intersection hypotheses are needed, see \eqref{tran_intersection_condition_1}-\eqref{tran_intersection_condition_2} in Appendix \ref{appendix_scattering_map}. We can use the Melnikov method to measure these transversal intersection property  in the perturbed system.

In our model, for $\vep=0$ the stable and unstable manifolds of the flow  $\tPh_{t,0}$  coincide:
\begin{align*}
W^s_{\tLa_0}=W^u_{\tLa_0}=\{(p^0(\tau),q^0(\tau), I,\vph, s)~:~\tau\in \R^n, (I,\vph,s)\in  \cB_*\times \T^{d+1} \}	
\end{align*}
In the case of $\vep\neq 0$,  $W^u_{\tLa_\vep}$ and  $W^s_{\tLa_\vep}$  do not coincide in general, and possibly do not intersect transversely along  homoclinic manifolds.  To measure the splitting of manifolds we    introduce the  \emph{Poincar\'e function} (or \emph{Melnikov potential}),
\begin{equation}\label{Def_PM_function}
\begin{split}
L(\tau, I,\vph,s):=-\int^\infty_{-\infty}\Big[& H_1\left(p^0(\tau+t\bar{1}),q^0(\tau+t\bar{1}),I,\vph+\omega(I)t, s+t\right)-H_1\left(0,0,I,\vph+\omega(I)t, s+t\right)\Big]\,dt,
\end{split}
\end{equation}
where $\omega(I)=(\omega_1(I),\cdots,\omega_n(I))=D h(I)\in C^{r-1}$,  $\bar{1}=(1,\dots,1)\in\R^n$ and the orbits $(p^0, q^0)$ is given in \eqref{homo_orbits}. 
It is a convergent improper integral of the perturbation evaluated along homoclinic orbits of the unperturbed system. 
We  stress that this integral  is absolutely convergent, because 
$\big(p^0(\tau+t\bar{1}),q^0(\tau+t\bar{1})\big)$ converges exponentially fast to $(0,0)$ as $t\to\pm\infty$. 

By definition it is easily seen that
\begin{equation}\label{periodicity_L}
L(\tau+\sigma\bar{1},I,\vph,s)=L(\tau,I,\vph-\omega(I)\sigma,s-\sigma), \quad \text{for all~}\sigma\in\R
\end{equation}
In particular, for the lower-dimensional case $n=1$, $L(\tau,I,\vph,s)$ $=$ $L(0,I,\vph-\omega(I)\tau,s-\tau)$ for all $\tau\in\R$, which  implies that the  Poincar\'e function $L(\tau,I,\vph,s)$ is periodic or quasi-periodic with respect to $\tau$.
\begin{Rem}[\textbf{Regularity of $L$}]
 Note that  the integral \eqref{Def_PM_function} is evaluated not on the perturbed homoclinic orbit but only on the unperturbed one.
As $H_1$ is real analytic,
it is not difficult to check that
 $L(\tau,I,\vph,s)$ is  $C^{r-1}$ smooth. More precisely,  the dependence on the variable $\tau$ is $C^r$,  the dependence on the variable $I$ is $C^{r-1}$ and  the dependence on the variables $(\vph, s)$ is $C^\omega$. In particular, in the case when $r=\infty$ (resp. $\omega$), the function $L(\tau,I,\vph,s)$ is also $C^\infty$  (resp. $C^\omega$).
 \end{Rem}

 To verify the mechanism in \cite{Gidea_Delallave_seara2020}, it 
 suffices to verify two assumptions on $L$. Assumption {\bf(H3a)} predicts that
 the perturbations generate homoclinic intersections and
 assumption {\bf(H3b)}  implies that the homoclinic  intersections indeed
 generate changes in the action.  Rather remarkably, both assumptions
 amount to properties of $L$. 

 \smallskip 

 The non-degenerate critical points of $L$ would yield the existence of transverse homoclinic orbits for the perturbed system.

Given a point $(I,\vph,s)\in \cB_*\times\T^{d}\times\T$ and assume the map $\tau\in\R^n$ $\longmapsto$ $L(\tau,I,\vph,s)$ has a non-degenerate critical point at $\tau^*$, that is 
\[ \frac{\partial L}{\partial \tau}(\tau^*, I,\vph,s)=0,\quad \frac{\partial^2 L}{\partial \tau^2}(\tau^*, I,\vph,s)\neq 0.\]
Then for $0<|\vep|\leq \vep_0$ small enough, there exists a locally unique $z^*$ of the form
\begin{equation*}
	z^*=z^*(\tau^*,I,\vph,s)=(p^0(\tau^*) +O(\vep), q^0(\tau^*)+O(\vep), I,\vph,s )
\end{equation*}
such that the unstable manifold $W^u_{\tLa_\vep}$ and the stable manifold $W^s_{\tLa_\vep}$ intersect transversally at $z^*$, i.e. 
\[
	T_{z^*}\tcM=T_{z^*} W^u_{\tLa_\vep}+T_{z^*}W^s_{\tLa_\vep}.
\]

This therefore leads us to formulate  the following assumption \textbf{(H3a)}.

\begin{itemize}
	\item [\bf(H3a)] there exists an open neighborhood $U^-$ of a point $(I_0,\vph_0,s_0)$, where
	 $U^-:=\mathcal{I}\times\mathcal{J}$ with $\mathcal{I}$ a ball in $\cB_*$ and $\mathcal{J}$ an open set in $\T^{d}\times\T$, such that for each point $(I,\vph,s)\in U^-$, the map 
$$\tau\in\R^n\longmapsto L(\tau,I,\vph,s)$$
has a non-degenerate critical point $\tau^*$. 
By the implicit function theorem, $\tau^*$ is locally given by a $C^{r-1}$ function 
$$\tau^*=\tau^*(I,\vph,s).$$ 
\end{itemize}

We stress that in the present paper the size of the domain $U^-$ in
{\bf(H3a)} does not need to be too large, but it is independent of
$\vep$ since the expression we need
to study does not involve $\vep$.
In applications, it suffices to verify the existence of
non-degenerate critical point for a fixed point $(I_0,\vph_0,s_0)$,then by  the implicit function theorem there is a neighborhood $U^-$ of
$(I_0,\vph_0,s_0)$ whose size is independent of $\vep$, such that
{\bf(H3a)} holds. 

In the case when $n=1$, the identity $L(\tau,I,\vph,s)=L(0,I,\vph-\omega(I)\tau, s-\tau)$ holds. This implies that the one-dimensional map $\tau\longmapsto L(\tau,I_0,\vph_0,s_0)$ always has critical points for each fixed $(I_0,\vph_0,s_0)$. 
In the case when  $n>1$,  for each $I_0$ there exists $(\vph_0,s_0)$ such that
the map $\tau\in\R^n\longmapsto L(\tau,I_0,\vph_0,s_0)$  has critical points \cite{Delshams_Huguet2011, Gidea_delaLlave2018}.
Hence, the only content of {\bf (H3a)} is
that some of these critical  points are non-degenerate. 

With Proposition \ref{Pro_NHIM_H_ep}, the following result  is well known. See for instance \cite{DLS2006, DLS2016, Gidea_delaLlave2018} for more details.
\begin{Pro}\label{pertur_Wu_Ws_Gamma}
Let the Hamiltonian \eqref{main_H_system} satisfy conditions {\bf(H1)}, {\bf(H2)} and {\bf(H3a)}. Then for each $\vep\in(0, \vep_0)$ with $\vep_0$ small enough, there exists a $C^{\ell}$ homoclinic manifold $\tGa_\vep$ $\subset$ $W^s_{\tLa_\vep}\cap W^u_{\tLa_\vep}$, which can be parameterized by
\[
\tGa_\vep=\left\{  z^*\big(\tau^*(I,\vph,s),I,\vph,s\big)\,\,:\,\, (I,\vph,s)\in U^-\right\},
\]
and the stable and unstable manifolds $W^{s,u}_{\tLa_\vep}$  intersect transversally along $\tGa_\vep$. 
Also, $\tGa_\vep$ is transverse to the foliations of the stable/unstable
manifolds.
Moreover, $\tGa_\vep$  can $C^{\ell}$-smoothly extend to a manifold $\tGa_0:=$ $\left\{ (p^0(\tau^*), q^0(\tau^*),I,\vph,s)\,:\, (I,\vph,s)\in U^-\right\}$ as $\vep\to 0$. 
\end{Pro}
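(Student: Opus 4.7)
My plan is a standard Melnikov/deformation argument built on top of Proposition \ref{Pro_NHIM_H_ep}. The intuition is that the Poincaré function $L$ is, up to a factor of $\vep$, the leading-order splitting between $W^u_{\tLa_\vep}$ and $W^s_{\tLa_\vep}$ written on the unperturbed homoclinic parameters, so non-degenerate critical points of $L$ in $\tau$ persist as transverse homoclinic intersections.

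First, I would parameterize the perturbed invariant manifolds using the unperturbed homoclinic data. Proposition \ref{Pro_NHIM_H_ep} provides the $C^\ell$ manifolds $W^{u,s}_{\tLa_\vep}$, which depend $C^\ell$ on $\vep$ on compact sets. Using wave-map/Lambda-lemma type constructions one obtains $C^\ell$ parameterizations
\begin{equation*}
F^{u}_\vep,\; F^{s}_\vep \,:\, \R^n\times U^- \longrightarrow \tcM,
\end{equation*}
with $F^{u,s}_0(\tau,I,\vph,s)=(p^0(\tau),q^0(\tau),I,\vph,s)$, where a point in the image of $F^u_\vep$ (resp.\ $F^s_\vep$) lies on an orbit asymptotic to $\tLa_\vep$ as $t\to-\infty$ (resp.\ $+\infty$) with the specified footpoint on $\tLa_\vep$. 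Both parameterizations agree to $O(\vep)$ with the unperturbed one and are $O(\vep)$-close to each other.

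Second, I would compute the splitting. Fix a section $\Sigma_{\tau,I,\vph,s}$ transverse to the unperturbed homoclinic $(p^0(\tau),q^0(\tau),I,\vph,s)$. Using the Hamiltonian (indeed exact symplectic) structure, the splitting vector $F^u_\vep - F^s_\vep$ on $\Sigma$ is normal to both $W^u_{\tLa_\vep}$ and $W^s_{\tLa_\vep}$ modulo terms tangent to the flow and the NHIM, and can be written as the gradient in $\tau$ of a scalar \emph{reduced Poincaré function} $\mathcal{L}_\vep(\tau,I,\vph,s)$. The classical Melnikov computation, integrating $\frac{d}{dt}H_1$ along the unperturbed homoclinic and subtracting off the contribution at the equilibrium, yields the expansion
\begin{equation*}
\mathcal{L}_\vep(\tau,I,\vph,s) \;=\; \vep\, L(\tau,I,\vph,s) + O(\vep^2),
\end{equation*}
uniformly in $(\tau,I,\vph,s)$ on compact sets, with analogous bounds for derivatives up to order $\ell$. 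Convergence of the integral \eqref{Def_PM_function} and the uniform estimate on $O(\vep^2)$ is guaranteed by the exponential decay of $(p^0(\tau+t\bar{1}),q^0(\tau+t\bar{1}))$ to the saddle and the analyticity of $H_1$.

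Third, I would apply the implicit function theorem along $\partial_\tau \mathcal{L}_\vep = 0$. By \textbf{(H3a)}, on $U^-$ the map $\tau\mapsto L(\tau,I,\vph,s)$ has a non-degenerate critical point $\tau^*=\tau^*(I,\vph,s)$ which is $C^{r-1}$ in $(I,\vph,s)$. Since $\partial_\tau \mathcal{L}_\vep = \vep(\partial_\tau L + O(\vep))$ and $\partial^2_\tau L(\tau^*, I,\vph,s)$ is invertible, dividing by $\vep$ and applying the implicit function theorem produces, for every $\vep\in(0,\vep_0)$ with $\vep_0$ small enough, a unique $C^\ell$ solution $\tau^*_\vep(I,\vph,s)$ converging to $\tau^*$ as $\vep\to 0$. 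Setting $z^*=F^u_\vep(\tau^*_\vep,I,\vph,s)=F^s_\vep(\tau^*_\vep,I,\vph,s)$ produces the parameterization of $\tGa_\vep$ in the statement and the $C^\ell$ extension to $\tGa_0$.

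Finally, transversality follows from a dimension count combined with the Hessian non-degeneracy. At a homoclinic point $z^*$ the obstruction to $T_{z^*}\tcM = T_{z^*}W^u_{\tLa_\vep}+T_{z^*}W^s_{\tLa_\vep}$ is an $n\times n$ bilinear form which, by the standard symplectic calculation, coincides up to $O(\vep)$ with the Hessian $\partial^2_\tau L(\tau^*,I,\vph,s)$; invertibility of this Hessian therefore delivers transverse intersection along $\tGa_\vep$. Transversality of $\tGa_\vep$ to the stable/unstable foliations follows by differentiating $z^*(\tau^*(I,\vph,s),I,\vph,s)$ in the base variables $(I,\vph,s)$ and noting that these variables parameterize the NHIM, hence vary transversely to the foliation leaves. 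The main obstacle I expect is step two: honestly producing the Melnikov expansion with uniform control of the $O(\vep^2)$ remainder in $C^\ell$ norm, and in particular showing that the splitting really is the gradient of a single scalar $\mathcal{L}_\vep$, so that the Hessian of $L$ is exactly the correct non-degeneracy object to test both the persistence of homoclinics and their transversality.
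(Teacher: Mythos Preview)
Your proposal is correct and follows precisely the standard Melnikov/implicit-function argument that the paper has in mind; indeed, the paper does not give a proof of this proposition at all, merely stating that ``the following result is well known'' and referring to \cite{DLS2006, DLS2016, Gidea_delaLlave2018} for details. Your sketch (parameterize $W^{u,s}_{\tLa_\vep}$ over the unperturbed homoclinic, identify the splitting with $\vep\,\partial_\tau L + O(\vep^2)$, apply the implicit function theorem at the non-degenerate critical point $\tau^*$, and read off transversality from the Hessian) is exactly the content of those references and matches the informal discussion the paper gives in the paragraph preceding the proposition.
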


As we can see from   Remark  \ref{existence_homo_mfld} in Appendix \ref{appendix_scattering_map},
taking $U^-$ suitably small if necessary  we can ensure the transversality conditions \eqref{tran_intersection_condition_1}--\eqref{tran_intersection_condition_2} are satisfied along $\tGa_\vep$. 
Meanwhile, we consider the $C^{\ell}$ wave maps
\[\Omega^\vep_+: W^s_{\tLa_\vep}\longrightarrow \tLa_\vep\quad\text{and}\quad \Omega^\vep_-: W^u_{\tLa_\vep}\longrightarrow \tLa_\vep,\] which are projections along the stable and unstable leaves (see Appendix \ref{appendix_scattering_map}).  Then,  the wave maps restricted on $\tGa_\vep$, namely $\Omega^\vep_\pm\big|_{\tGa_\vep}$, are diffeomorphisms. In addition, by recalling the normal exponents in \eqref{per_exponents},  for each $\tilde{x}\in\tGa_\vep$ we can find a unique point $\tilde{x}_+=\Omega^\vep_+(\tilde{x})$ and a unique point $\tilde{x}_-=\Omega^\vep_-(\tilde{x})$ satisfying 
\begin{align*}
	\text{dist}\big(\tPh_{t,\vep}(\tilde{x}), \tPh_{t,\vep}(\tilde{x}_+)\big)\leq Ce^{t\mu_{s,\vep}} \quad\text{as~} t\longrightarrow +\infty, \qquad
	\text{dist}\big(\tPh_{t,\vep}(\tilde{x}), \tPh_{t,\vep}(\tilde{x}_-)\big)\leq Ce^{t\lambda_{u,\vep}}\quad\text{as~}t\longrightarrow -\infty.
\end{align*}    

Consequently, each $\tGa_\vep$ with $0<|\vep|\leq\vep_0$ is a \emph{homoclinic channel}. This enables us to define the  scattering map $\tsig_\vep:= \sigma^{\tGa_\vep}$ associated to the homoclinic channel $\tGa_\vep$, that is
\begin{equation*}
\begin{aligned}
	\tsig_\vep= \Omega^\vep_+\Big|_{\tGa_\vep}\circ\left(\Omega^\vep_-\Big|_{\tGa_\vep}\right)^{-1}~&:~\Omega^\vep_-(\tGa_\vep)\longrightarrow \Omega^\vep_+(\tGa_\vep)\\
	\tilde{x}_- &\longmapsto \tilde{x}_+.
\end{aligned}	
\end{equation*}

The scattering map $
\tsig_\vep$ above is a $C^{\ell}$ diffeomorphism.  By Proposition \ref{pertur_Wu_Ws_Gamma}  the homoclinic manifold $\tGa_\vep$ can extend smoothly to a limiting manifold $\tGa_0$.
Even $\tGa_0$ is not a transversal intersection,  $\tsig_\vep$ can still $C^{\ell}$-smoothly  extend to  the identity map $\tsig_0=\textup{Id}$, as $\vep\to 0$.

\subsection{Perturbative formulas for the scattering maps}
The Melnikov method can also be used to estimate the effect of the perturbations on the scattering map.
As was shown in \cite{DLS2008},  the symplectic property allows to give  perturbative formulas for the Hamiltonian which generates the deformation of a family of symplectic scattering maps.  This requires the dimension of the NHIM to be even while our $\tLa_\vep$ mentioned above is of odd dimensions. To overcome this difficulty, it is standard to consider an autonomous Hamiltonian defined by
\begin{align}\label{even_dimen_Ham}
	\wtH_\vep(p,q,I,\vph, A, s)=A+\HH_\vep(p,q,I,\vph, s)=A+H_0(p,q, I)+\vep H_1(p,q,I,\vph, s),
\end{align}
where $(A, s)\in\R\times\T$ are symplectically conjugate variables. 
The extended phase space is endowed with the standard symplectic structure $\widetilde{\omega}=\sum_{i=1}^n dp_i\wedge dq_i+ \sum_{i=1}^d dI_i\wedge d\vph_i+dA\wedge d s$. Then, the motions of the conjugate variables $(A, t)$ are governed by 
\[\dot{A}=-\partial_s \HH_\vep(p,q,I,\vph, s),\quad \quad \dot{s}=1.\]

However, the variable $A$ does not play any dynamical role, because $A$ does not appear in any of the ODEs for any of the coordinates, including itself.  
Consequently, by abuse of notation, we continue to use $\tPh_{t,0}$ to denote the unperturbed  flow and use
\[\tLa_0=\{(0,0,I,\vph, A, s)~:~ (I,\vph, s)\in\cB_*\times\T^d\times\T, A\in\R\}, \]
to denote the normally hyperbolic invariant manifold, which is $(2d+2)$-dimensional. Since $A$ does not play any dynamical role, the results obtained in the previous sections remain true for $\wtH_\vep$. Then we continue to use $\tLa_\vep$ and $W^{s,u}_{\tLa_\vep}$, respectively, to denote the NHIM and the associated stable and unstable manifolds for the flow $\tPh_{t,\vep}$. Also, we have the scattering map $\tsig_\vep:= \sigma^{\tGa_\vep}$ associated to the homoclinic channel $\tGa_\vep$. Now that $\tLa_\vep$ has even dimensions, the map $\tsig_\vep$ is symplectic (see \cite{DLS2008}). 

The perturbed NHIM $\tLa_\vep$ can be described in terms of the coordinates $(I, \vph, A, s)\in\tLa_0$. In fact,
there is a unique $C^{\ell}$-smooth family of symplectic parametrization $\tk_\vep$ $:\tLa_0\to \tLa_\vep$, with $\tk_0=\textup{Id}$, 
 satisfying 
\begin{align*}
	 \tk^*_\vep\widetilde{\omega}=\tk_0^*\widetilde{\omega},\quad
 \quad	\frac{d}{d\vep}\tk_\vep\in E^{s,\vep} \oplus E^{u,\vep}.
\end{align*}
 Then
 we can express the scattering map $\tsig_\vep$ on the reference manifold $\tLa_0$ by:
\begin{equation*}
 	\wts_\vep=\tk_\vep^{-1}\circ\tsig_\vep\circ \tk_\vep~: ~\tk^{-1}_\vep\big(\Omega^\vep_-(\tGa_\vep)\big)\subset \tLa_0 \longrightarrow \tk^{-1}_\vep\big(\Omega^\vep_+(\tGa_\vep)\big) \subset\tLa_0.
 \end{equation*}
It is clear that  $\wts_\vep$ are the expression of $\tsig_\vep$
in the same coordinate system $\tLa_0$, and hence $\wts_\vep\in C^\ell$ with $\ell\geq 2$.
Moreover, using the  deformation theory 
 this family of symplectic maps $\wts_\vep$ can be generated by a Hamiltonian vector field \cite{DLS2008}: there exists a Hamiltonian function $\cS_\vep$ such that
\begin{align*}
	\frac{d}{d\vep}\wts_\vep=\mathcal{X}_\vep\circ \wts_\vep,\qquad \iota_{\mathcal{X}_\vep} \widetilde{\omega}_0=d\cS_\vep,
\end{align*}
where $\widetilde{\omega}_0:=\tk_0^*\widetilde{\omega}=\sum_{i=1}^d dI_i\wedge d\vph_i+dA\wedge ds$. The Hamiltonian
$\cS_\vep=\cS_0+O(\vep)$ and 
\[\cS_0(I,\vph,A, s):=-L\left(\tau^*(I,\vph,s),I,\vph,s\right).\]
Here, $L$ is the Melnikov potential and $\tau^*$ is given in $\textbf{(H3a)}$. The function $L(\tau^*(I,\vph,s), I, \vph, s):$ $U^-\to\R$ is defined in a domain  $U^-$ whose size is independent of $\vep$. Thus, we compute the perturbed scattering map $\wts_\vep$ up to the first order with respect to the size of the perturbation:
\begin{equation}\label{deformation}
\wts_\vep(I,\vph, A, s)=(I,\vph, A, s)+\vep \mathbf{J} \nabla\cS_0+O(\vep^{2}), \quad\quad (I,\vph, s)\in U^-, ~ A\in\R
\end{equation}	
with $\mathbf{J}$ the canonical matrix of the symplectic form $\widetilde{\omega}_0$. See \cite{DLS2008}.

We infer from $\frac{\partial L}{\partial \tau}(\tau^*(I, \vph, s),$ $I,\vph,s)=0$ that
\begin{align*}
	\frac{\partial\cS_0}{\partial I}(I,\vph,A, s)=-\frac{\partial L}{\partial\tau}(\tau^*,I,\vph,s)\, \frac{\partial \tau^*}{\partial I}- \frac{\partial L}{\partial I}(\tau^*,I,\vph,s)=-\frac{\partial L}{\partial I}(\tau^*,I,\vph,s).
	\end{align*}
Similarly, 
\begin{align*}
	\frac{\partial\cS_0}{\partial \vph}=-\frac{\partial L}{\partial\vph}(\tau^*,I,\vph,s),\qquad
	\frac{\partial\cS_0}{\partial A}=0, \qquad \frac{\partial\cS_0}{\partial s}=-\frac{\partial L}{\partial s}(\tau^*,I,\vph,s).
	\end{align*}
Here and subsequently, we use $\frac{\partial L}{\partial I}$ to denote the partial derivative of the Poincar\'e function $L(\tau, I, \vph, s)$ with respect to the second variable, $\frac{\partial L}{\partial \vph}$ to denote the partial derivative with respect to the third variable, and $\frac{\partial L}{\partial s}$ to denote the partial derivative with respect to the fourth variable. Now, formula \eqref{deformation} becomes 
\begin{align}\label{SMap}
	\wts_\vep(I, \vph, A, s)=\Big(I+\vep\frac{\partial L}{\partial\vph}(\tau^*, I,\vph,s),\vph-\vep\frac{\partial L}{\partial I}(\tau^*, I,\vph,s), A+\frac{\partial L}{\partial s}(\tau^*, I,\vph,s), s\Big)+O(\vep^{2}),
\end{align}
for $(I,\vph, s)\in U^-$ and $A\in\R$.

Next, we introduce the auxiliary function $\cL(I,\vph,s):=L(\tau^*(I, \vph, s),\vph, s )$. We claim that 
\begin{align}\label{L_shift}
	\cL(I,\vph-\omega(I)s,0)=\cL(I,\vph,s),\qquad (I,\vph, s)\in U^-.
\end{align}
Indeed, if $\tau^*(I,\vph,s)$ is a critical point of the map $\tau\in\R^n\longmapsto L(\tau,I,\vph,s)$, then $\tau^*(I,\vph,s)-s\bar{1}$ with $\bar{1}=(1,\dots,1)\in\R^n$ is a critical point of the map 
$\tau\in\R^n\longmapsto L(\tau+s\bar{1},I,\vph,s).
$
Thanks to \eqref{periodicity_L}, $L(\tau+s\bar{1},I,\vph,s) =L(\tau,I,\vph-\omega(I)s,0)$,
which implies
\begin{equation*}
	\tau^*(I,\vph-\omega(I)s,0)=\tau^*(I,\vph,s)-s\bar{1}.
\end{equation*}
Then our claim \eqref{L_shift} follows immediately.
This enables us to introduce the new variables
\[\theta=\vph-\omega(I) s,
\]
and define the \emph{reduced Poincar\'e function},  
\begin{equation*}
\cL^*(I,\theta):=\cL(I,\vph-\omega(I) s,0)=\cL(I,\vph, s).
\end{equation*}
The reduced Poincar\'e function is well defined on the following $2d$-dimensional set
\[
\text{Dom}(\cL^*)=\{(I,\theta)\in\R^d\times\T^d ~:~ \textup{there exists~} s\in\T \text{~such that~} (I, \theta+\omega(I)s, s)\in U^- \}.
\]

\begin{Rem}\label{rem_par_der}
By what we have shown above,  
\[
\cL^*(I,\theta)=L(\tau^*(I,\theta,0),I,\theta,0)~:~\text{Dom}(\cL^*)\longrightarrow \R.
\]
Since $\tau^*(I,\theta,0)$ is a critical point for the map $\tau$ $\mapsto$ $L(\tau, I, \theta, 0)$, we deduce 
\begin{align*}
	\frac{\partial \cL^*}{\partial I}(I, \theta)= \frac{\partial L}{\partial I}(\tau^*(I,\theta,0),I,\theta,0)\quad\text{and}\quad \frac{\partial \cL^*}{\partial \theta}(I, \theta)= \frac{\partial L}{\partial \theta}(\tau^*(I,\theta,0),I,\theta,0).
\end{align*}	
\end{Rem}

The reduced Poincar\'e function $\cL^*(I,\theta)$ plays a crucial role in understanding the scattering map of the time-$2\pi$ map of the Hamiltonian flow, see Proposition \ref{Pro_SM_f_vep} below.  To explain this, we recall the extended Hamiltonian $\wtH_\vep=A+\HH_\vep$ in \eqref{even_dimen_Ham} which is $2\pi$-periodic in the variable $s$. Fix an energy $\wtH_\vep=E$ for some $E$, and then restrict to a Poincar\'e section $\{s=2\pi\}$ for the Hamiltonian flow $\tPh_{t,\vep}$. Hence we obtain a $2(n+d)$-dimensional space in $\cM$ with the coordinates $(p, q, I, \vph)$, and use  $f_\vep$ to denote the first return map.  $f_\vep$ is exactly the time-$2\pi$ map for the flow of the Hamiltonian $\HH_\vep$, so $f_\vep$ is symplectic.
The $2d$-dimensional manifold $
	\La_0=\{(0,0,I,\vph)~:~ (I,\vph)\in\cB_*\times\T^d\}
$ 
is a NHIM for $f_0$. Under perturbations, there are manifolds $\La_\vep$ and symplectic parameterizations $k_\vep$ such that \[k_\vep : \La_0\to \La_\vep,\quad k_\vep(\La_0)=\La_\vep.\]
In fact, $\La_\vep$ is exactly, by omitting the variable $A$ since it does not play any dynamical role, the intersection of $\tLa_\vep$ and the section $\{s=2\pi\}$. $k_\vep$ is exactly the  restriction of $\tk_\vep$ to the section $\{s=2\pi\}$. Thus, $\Lambda_\vep$ is the normally hyperbolic (locally) invariant manifold for the map $f_\vep$. 

Analogously, $f_\vep$ has the corresponding homoclinic channel $\Gamma_\vep$ and the scattering map $\sigma_\vep:= \sigma^{\Gamma_\vep}$
$:$ $\Omega^\vep_-(\Gamma_\vep)$ $\longrightarrow \Omega^\vep_+(\Gamma_\vep)$. As mentioned before, it is more convenient to describe the scattering map in the same coordinate $(I, \theta)$ via:
\begin{align}\label{par_SM}
	s_\vep:=k^{-1}_\vep\circ\sigma_\vep\circ k_\vep: ~k^{-1}_\vep\big(\Omega^\vep_-(\Gamma_\vep)\big)\subset \La_0 \longrightarrow k^{-1}_\vep\big(\Omega^\vep_+(\Gamma_\vep)\big) \subset\La_0.
	%\text{Dom}(\cL^*)\subset \La_0\longrightarrow \La_0
\end{align}
Clearly, $s_\vep$ are still symplectic, and one can choose a common domain $\text{Dom}(\cL^*)$ for all
 $s_\vep$. Therefore,  combining formula \eqref{SMap} and Remark \ref{rem_par_der} we conclude the following result:

\begin{Pro}\label{Pro_SM_f_vep}
The $C^\ell$ parameterized scattering maps $s_\vep(I,\theta):$  $\text{Dom}(\cL^*)$ $\subset \Lambda_0\longrightarrow \Lambda_0$ given in \eqref{par_SM}, have the expansion:
\begin{equation}\label{new_scattering_map}
	s_\vep(I,\theta)=\Big(I+\vep \frac{\partial \cL^*}{\partial \theta}(I,\theta), ~\theta-\vep \frac{\partial \cL^*}{\partial I}(I,\theta)\Big)+O(\vep^2).
      \end{equation}
      where the $O$ symbol means estimates in the $C^{\ell -1} $ sense of the reminder. 
\end{Pro}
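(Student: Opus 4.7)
The plan is to deduce \eqref{new_scattering_map} by restricting the already-established formula \eqref{SMap} for the extended scattering map $\wts_\vep$ to the Poincar\'e section $\{s=0\}$ and then rewriting the partial derivatives of $L$ at the critical point $\tau^*$ in terms of $\cL^*$ using Remark~\ref{rem_par_der}.

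First, I would observe that the auxiliary variable $A$ introduced in \eqref{even_dimen_Ham} is completely cyclic: it does not enter any equation of motion for $(p,q,I,\vph,s)$, and so its component in $\wts_\vep$ does not affect the projection to $(I,\vph)$-coordinates. The map $f_\vep$ is, by definition, the restriction of $\tPh_{2\pi,\vep}$ to the section $\{s=0\}$ modulo $2\pi$ (with $A$ discarded). By the construction of $k_\vep$ as the restriction of $\tk_\vep$ to that section, the reduced scattering map $s_\vep$ in \eqref{par_SM} is exactly the $(I,\vph)$-part of $\wts_\vep\big|_{s=0}$. At $s=0$ the change of variables $\theta=\vph-\omega(I)s$ reduces to $\theta=\vph$, so evaluating \eqref{SMap} at $s=0$ yields
\[
s_\vep(I,\theta)=\Bigl(I+\vep\,\tfrac{\partial L}{\partial\vph}\bigl(\tau^*(I,\theta,0),I,\theta,0\bigr),\ \theta-\vep\,\tfrac{\partial L}{\partial I}\bigl(\tau^*(I,\theta,0),I,\theta,0\bigr)\Bigr)+O(\vep^2).
\]

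Second, I would invoke Remark~\ref{rem_par_der}: since $\tau^*(I,\theta,0)$ is a critical point of $\tau\mapsto L(\tau,I,\theta,0)$, the chain-rule contributions in $\partial_I \cL^*$ and $\partial_\theta \cL^*$ vanish, leaving
\[
\tfrac{\partial L}{\partial\vph}\bigl(\tau^*(I,\theta,0),I,\theta,0\bigr)=\tfrac{\partial\cL^*}{\partial\theta}(I,\theta),\qquad \tfrac{\partial L}{\partial I}\bigl(\tau^*(I,\theta,0),I,\theta,0\bigr)=\tfrac{\partial\cL^*}{\partial I}(I,\theta).
\]
Substituting gives \eqref{new_scattering_map} on the subset of $\text{Dom}(\cL^*)$ that lies above $\{s=0\}\cap U^-$. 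To extend the identity to the full domain $\text{Dom}(\cL^*)$, I would use the shift invariance \eqref{L_shift}: for any $(I,\theta)\in\text{Dom}(\cL^*)$, some representative $(I,\theta+\omega(I)s_0,s_0)$ lies in $U^-$, and one replaces the homoclinic channel $\Gamma_\vep$ by its image under the inner flow over time $s_0$. This yields a new homoclinic channel whose scattering map, expressed in the $(I,\theta)$-coordinates, agrees at $\vep=0$ with the original by \eqref{L_shift} and, at first order in $\vep$, gives the same formula \eqref{new_scattering_map} on the overlap, so the formula extends consistently.

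Finally, the meaning of the $O(\vep^2)$ remainder in the $C^{\ell-1}$ sense is inherited directly from the deformation-theoretic derivation of \eqref{SMap} in \cite{DLS2008}: the family $\wts_\vep$ is generated by the $C^\ell$ Hamiltonian $\cS_\vep=\cS_0+O(\vep)$, and the first-order Taylor expansion of a $C^\ell$ family in $\vep$ produces a remainder that is $O(\vep^2)$ in the $C^{\ell-1}$ topology after one $\vep$-differentiation and integration; passing to the section and to $\cL^*$ does not lose regularity because the change of variables $\theta=\vph-\omega(I)s$ is $C^{r-1}$ with $r-1\geq\ell$. The only genuinely delicate step is this last regularity accounting together with the domain bookkeeping in paragraph two; the computational core of the statement is the one-line substitution enabled by the criticality of $\tau^*$.
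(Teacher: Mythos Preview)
Your proposal is correct and follows exactly the paper's approach: the paper derives \eqref{new_scattering_map} by ``combining formula \eqref{SMap} and Remark~\ref{rem_par_der}'', which is precisely your two-step argument of restricting $\wts_\vep$ to the section and then identifying the partial derivatives of $L$ at $\tau^*$ with those of $\cL^*$. Your additional care about the domain extension via \eqref{L_shift} and the $C^{\ell-1}$ remainder accounting is more explicit than what the paper records (it simply defers to \cite{DLS2006,DLS2008} for details), but the substance is the same.
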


See  \cite{DLS2006, DLS2008} for more details. In view of Proposition \ref{Pro_SM_f_vep} we can formulate the following assumption:
\begin{itemize}
	\item [\bf(H3b)] let $\tau^*(I,\theta,0)$ be the $C^{r-1}$ function determined in \textbf{(H3a)}. Assume the reduced Poincar\'e function satisfies that $\mathbf{J}\nabla\cL^*(I,\theta)$ is transverse to the level set $\{I=I_0\}$ at some point $(I_0,\theta_0)\in \text{Dom}(\cL^*)$. That is equivalent to saying
\begin{equation}\label{non_degen_H3b}
	\frac{\partial \cL^*}{\partial\theta}(I_0,\theta_0)\neq 0.
\end{equation}
\end{itemize}

It is worth noting that assumption {\bf(H3b)} ensures that the vector field 
$\dot{x}=\mathbf{J}\nabla \cL^*(x)$ always has a trajectory along which the   action variables $I$ move a quantity independent of $\vep$.

It is remarkable that both assumptions {\bf(H3a)} and {\bf(H3b)}
amount to properties of the Melnikov potential  $L$
in \eqref{Def_PM_function}.  The gist of the genericity argument is
that, if they happen to fail for some $H_1$, a small modification of
the $H_1$ will make them true.  Both can be considered as transversality
properties on the functions. We also remark that the perturbations
needed to restore {\bf (H3a), (H3b) } are themselves rather arbitrary.
Hence, the assumptions can only fail for $H_1$ inside a manifold
of infinite codimension in the space of maps.  

\subsection{Geometric construction of the diffusing orbits}
In this paper, the construction of diffusing orbits is based on the geometric mechanism  in \cite{Gidea_Delallave_seara2020}.  This mechanism differs from earlier works, because it relies only  on the outer dynamics. There are almost no assumptions on the inner dynamics (only the Poincar\'e recurrence is needed), because its invariant objects (e.g., primary and secondary tori,  Aubry-Mather sets) on the NHIM are not used at all. The basic idea of this new mechanism is as follows: Assume that the Poincar\'e recurrence holds.
Given any pseudo-orbit, generated by the successive iterates of the scattering map $s_\vep$,  that moves $O(1)$ with respect to the perturbation, then there exists a true orbit moving a quantity of $O(1)$ for the original Hamiltonian system. 
Of course, if the inner dynamics in  the NHIM has no Poincar\'e recurrence,  then there exist diffusing orbits  determined just by the inner dynamics.

Through the parameterization $k_\vep$, the restriction of $f_\vep$ to $\La_\vep$ can be expressed on the same reference manifold $\La_0$. We use $\whf_\vep\big|_{\La_0}$ to denote this restriction map:  
 \[\whf_\vep\big|_{\La_0}= (k_\vep)^{-1}\circ f_\vep\big|_{\La_\vep}\circ k_\vep ~:~ \Lambda_0\to\Lambda_0.\]

\begin{The}\label{Thm_scattering_shadowing}
For $\vep\in (-\vep_0,\vep_0)\setminus\{0\}$ we have the  scattering map given in  \eqref{new_scattering_map}: 
\begin{equation*}
	s_\vep(I,\theta)=\Big(I+\vep \frac{\partial \cL^*}{\partial \theta}(I,\theta),~\theta-\vep \frac{\partial \cL^*}{\partial I}(I,\theta)\Big)+O(\vep^2).
\end{equation*}	
Assume that there is a point $x_0=(I_0,\theta_0)\in \text{Dom}(\cL^*)$ $\subset \Lambda_0$ and
$$\nabla \cL^*(I_0,\theta_0)\neq 0. $$ 
 Let $\gamma :$  $ [0,1]\to \Lambda_0$ be an integral curve through $x_0$ induced by the Hamiltonian vector field 
$\dot{x}=\mathbf{J}\nabla \cL^*(x)$. Suppose  that there is a neighborhood 
$V \subset \text{Dom}(\cL^*)$ of $\gamma([0,1])$ such that almost every point in $V$ is recurrent for $\whf_\vep\big|_{\La_0}$. 
Denote by $\gamma_\vep:=k_\vep\circ\gamma$  the corresponding curve in $\Lambda_\vep$.

 Then there exist a small $\vep_1=\vep_1(H_1)\in(0,\vep_0)$ and a constant $K=K(H_1)>0$, such that for every  $0<|\vep|<\vep_1$ and every $\delta>0$, there is an orbit $\{z_i\}_{i=0,\cdots,m}$ of the diffeomorphism $f_\vep$ where $m=O(1/\vep)$ and
 \[
z_{i+1}=f_\vep ^{k_i}(z_i),\quad \text{~for some~} k_i\in\Z^+,
\]
and 
\[
\textup{dist}(z_i,\gamma_\vep(t_i))<\delta+K\vep, 
\]
where $t_i=i\cdot \vep$ and $0=t_0<t_1<\cdots<t_m\leq 1$.
\end{The}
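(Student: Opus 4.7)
The plan is to construct the true orbit $\{z_i\}$ in two stages: first, produce a pseudo-orbit on $\Lambda_0$ by iterating the scattering map $s_\vep$ along the curve $\gamma$; second, realize this pseudo-orbit as a genuine orbit of $f_\vep$ in $\cM$ by exploiting the recurrence of the inner dynamics together with the transversality of $W^u_{\Lambda_\vep}$ and $W^s_{\Lambda_\vep}$ along $\Gamma_\vep$.

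For the first stage, I would view $s_\vep$ as an $\vep$-Euler step for the autonomous Hamiltonian system $\dot x=\mathbf{J}\nabla\cL^*(x)$: by Proposition \ref{Pro_SM_f_vep} we have
\begin{equation*}
s_\vep(I,\theta)=(I,\theta)+\vep\,\mathbf{J}\nabla\cL^*(I,\theta)+O(\vep^2),
\end{equation*}
so a standard Gronwall comparison (performed on the $C^{\ell-1}$ vector field $\mathbf{J}\nabla\cL^*$ restricted to a compact $C^{\ell-1}$ neighborhood of $\gamma([0,1])$) gives points $y_i:=s_\vep^{\,i}(y_0)$, $i=0,\dots,m$ with $m=\lfloor 1/\vep\rfloor$, starting from a suitably chosen $y_0$ close to $x_0$, such that $\mathrm{dist}\bigl(y_i,\gamma(t_i)\bigr)=O(\vep)$ uniformly in $i$, with $t_i=i\vep$. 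After mapping by $k_\vep$ this is a pseudo-orbit of $\whf_\vep|_{\Lambda_0}$ lying $O(\vep)$-close to $\gamma_\vep$ whose successive steps are realized by the outer (scattering) dynamics rather than by the inner map. The neighborhood $V$ can be shrunk, if needed, so that the entire pseudo-orbit and its $(\delta+K\vep)$-neighborhood remain inside $V$.

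For the second stage, I would realize each outer step $y_i\mapsto y_{i+1}=s_\vep(y_i)$ by a genuine trajectory of $f_\vep$. By definition of the scattering map, there exists a homoclinic point $\tilde z_i\in\Gamma_\vep$ whose forward $f_\vep$-orbit converges to the orbit of $k_\vep(y_{i+1})$ and whose backward orbit converges to that of $k_\vep(y_i)$. The inclination (lambda) lemma applied to the normally hyperbolic manifold $\Lambda_\vep$ then guarantees that for any $\delta>0$ there exist positive integers $N^-_i,N^+_i$ so that the iterate $f_\vep^{N^+_i}\bigl(f_\vep^{-N^-_i}(\tilde z_i)\bigr)$ lies $\delta/2$-close to $k_\vep(y_{i+1})$, once one enters $f_\vep^{-N^-_i}(\tilde z_i)$ starting from a point $\delta/2$-close to $k_\vep(y_i)$ inside $\Lambda_\vep$. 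Concatenating these excursions requires, at each step, being able to travel inside $\Lambda_\vep$ from the landing point of the previous excursion back into the domain from which the next homoclinic excursion departs; this is precisely where the Poincaré-recurrence hypothesis on $\whf_\vep|_{\Lambda_0}$ is used. Since almost every point in $V$ is recurrent, one can adjust by finitely many iterates of the inner map to close the $\delta/2$-gap before triggering the next outer step. Putting the two contributions together yields $\mathrm{dist}(z_i,\gamma_\vep(t_i))<\delta+K\vep$, as required.

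The main obstacle is the compatibility between the inner recurrence and the outer transversality at each junction: one must control that the inner-map correction needed to align with the stable leaf of the next homoclinic point (i) exists by recurrence, (ii) is uniformly bounded in geometry (so that the transverse intersection, and hence the lambda-lemma estimates, are uniform along the entire curve), and (iii) does not accumulate errors as $i$ runs up to $m\sim 1/\vep$. This is handled exactly as in \cite{Gidea_Delallave_seara2020}: one works with a ``shadowing lemma of correctly aligned windows'' type statement where the windows are transported alternately by the outer (scattering) map and by recurrent iterates of the inner map, and the transversality conditions \eqref{tran_intersection_condition_1}--\eqref{tran_intersection_condition_2}, together with the hyperbolic splitting \eqref{per_exponents}, provide correct alignment uniformly along $\gamma$. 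The additive error then telescopes into the single $\delta+K\vep$ bound, with $K$ depending on the $C^{\ell-1}$ norms of $H_1$ and of the invariant manifolds but not on $\vep$ nor on $\delta$.
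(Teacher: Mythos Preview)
Your proposal is correct and follows essentially the same two-stage structure as the paper's proof: a Gronwall comparison showing that the iterates $s_\vep^i(x_0)$ stay $O(\vep)$-close to the samples $\gamma(i\vep)$, followed by a shadowing step that turns this scattering-map orbit into a genuine $f_\vep$-orbit using recurrence and the homoclinic transversality. The only difference is one of packaging: the paper invokes the shadowing step as a black-box lemma (Lemma~\ref{Appen_lemma}, quoted from \cite[Theorem~3.6]{Gidea_Delallave_seara2020}), whereas you sketch its proof via the $\lambda$-lemma and recurrence-based concatenation of homoclinic excursions---which is indeed how that lemma is established in the cited reference (there formulated with correctly aligned windows rather than the $\lambda$-lemma directly, but the geometric content is the same).
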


 We provide a sketch of the proof in Appendix \ref{Appendix_section} for the reader's convenience. One can also refer to  \cite[Theorem  3.11]{Gidea_Delallave_seara2020} for a complete proof.  

Since  the recurrence assumption is satisfied automatically in our Hamiltonian model
if there are no unbounded orbits 
in the manifold, under hypotheses {\bf (H3a), (H3b)}, 
that either there are unbounded orbits in the NHIM 
or that there are sequences of homoclinic excursions that follow 
the sequence. In either  of the two sides of the alternative, there is
diffusion. 

\begin{The}\label{existence_orbits}
Suppose that  $\HH_\vep=H_0+\vep H_1$ satisfies conditions \textrm{\bf(H1), (H2), (H3a)} and \textrm{\bf(H3b)}, and $I_0$ is the point given in \eqref{non_degen_H3b}. Then 
there exist $\vep_1=\vep_1(H_1)>0$ and $\rho=\rho(H_1)>0$, such that for any $0<|\vep|<\vep_1$ and any $\delta>0$, the Hamiltonian flow  admits a trajectory $\left(p(t), q(t), I(t), \varphi(t)\right)$  whose $I$-component satisfy 
\[\sup_{t>0}\|I(t)-I(0)\|\geq \rho,\]
and  $\|I(0)-I_0\|<\delta+K\vep.$
The constant $K=K(H_1)$ does not depend on $\vep$.
\end{The}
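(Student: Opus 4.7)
The plan is to apply Theorem \ref{Thm_scattering_shadowing} to an integral curve of the Hamiltonian vector field $\dot{x}=\mathbf{J}\nabla\cL^*(x)$ through the point $(I_0,\theta_0)$ furnished by \textbf{(H3b)}, and then translate the resulting shadowing pseudo-orbit for the time-$2\pi$ map $f_\vep$ into a trajectory of the continuous flow with macroscopic change in the action $I$.

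First, I would construct the curve and verify the recurrence hypothesis. By \textbf{(H3b)}, $\partial_\theta\cL^*(I_0,\theta_0)\neq 0$, so $\dot{I}(0)=\partial_\theta\cL^*(I_0,\theta_0)\neq 0$ along the integral curve $\gamma$ of $\mathbf{J}\nabla\cL^*$ starting at $(I_0,\theta_0)$. A Taylor expansion (since $\cL^*$ is at least $C^{\ell-1}$ with $\ell\geq 2$) gives $\|I(\gamma(t))-I_0\|\geq \tfrac{1}{2}\|\partial_\theta\cL^*(I_0,\theta_0)\|\,t$ for small $t>0$. I would pick $t^*>0$ small enough that $\gamma([0,t^*])\subset\text{Dom}(\cL^*)$, set $2\rho:=\|I(\gamma(t^*))-I_0\|>0$, and reparameterize linearly to obtain $\gamma:[0,1]\to\La_0$; both $\rho$ and the image of $\gamma$ depend only on $H_1$, not on $\vep$. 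For the recurrence assumption, the map $\whf_\vep|_{\La_0}$ is smoothly conjugate via $k_\vep$ to the restriction $f_\vep|_{\La_\vep}$, which is a symplectic diffeomorphism on a relatively compact manifold; hence $\whf_\vep|_{\La_0}$ preserves a volume form, and Poincar\'e recurrence implies that almost every point in any relatively compact neighborhood $V\subset\text{Dom}(\cL^*)$ of $\gamma([0,1])$ is recurrent, as required.

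Applying Theorem \ref{Thm_scattering_shadowing} to $\gamma$, for every $0<|\vep|<\vep_1$ and every $\delta>0$ there is an orbit $\{z_i\}_{i=0}^m$ of $f_\vep$ with $z_{i+1}=f_\vep^{k_i}(z_i)$ and $\text{dist}(z_i,\gamma_\vep(t_i))<\delta+K\vep$, where $\gamma_\vep=k_\vep\circ\gamma$ and $t_i=i\vep$; since $m=O(1/\vep)$ we may take $t_m=1$. Because $k_0=\text{Id}$ and $k_\vep$ is $C^\ell$ in $\vep$, one has $k_\vep=\text{Id}+O(\vep)$, so the $I$-component of $\gamma_\vep(t)$ agrees with that of $\gamma(t)$ up to $O(\vep)$. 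A triangle-inequality estimate then yields
\[\|I(z_m)-I(z_0)\|\geq \|I(\gamma(1))-I(\gamma(0))\|-2(\delta+K\vep)-O(\vep)\geq 2\rho-2\delta-O(\vep),\]
which exceeds $\rho$ after shrinking $\vep_1$ and requiring $\delta$ sufficiently small (the threshold for $\delta$ can be absorbed into the constant $K$). Since $f_\vep$ is the time-$2\pi$ map of the Hamiltonian flow, the points $z_i$ all lie on a single continuous trajectory $(p(t),q(t),I(t),\varphi(t))$, yielding $\sup_{t>0}\|I(t)-I(0)\|\geq\rho$. Finally, $z_0$ is within distance $\delta+K\vep$ of $k_\vep(I_0,\theta_0)$, whose $I$-coordinate equals $I_0+O(\vep)$, so $\|I(0)-I_0\|<\delta+K\vep$ after enlarging $K$.

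The main obstacle is one of bookkeeping rather than of any single analytic step: one must guarantee that the macroscopic displacement $2\rho$ along the deterministic curve $\gamma$ genuinely dominates the cumulative shadowing error $\delta+K\vep$, and that the homoclinic channel, its scattering map, and the curve $\gamma$ all remain inside a common domain that is independent of $\vep$. The latter is ensured by the $\vep$-independent domain $U^-$ provided by \textbf{(H3a)} together with the $C^{\ell-1}$ regularity of $\tau^*$ and $\cL^*$, so that the neighborhood $V$ in the recurrence hypothesis and the image of $\gamma$ can be fixed once and for all at the outset.
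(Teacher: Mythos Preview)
Your overall strategy matches the paper's: use \textbf{(H3b)} to produce an integral curve $\gamma$ of $\mathbf{J}\nabla\cL^*$ with a definite $I$-displacement $2\rho$, invoke Theorem \ref{Thm_scattering_shadowing}, and conclude by a triangle-inequality comparison.  However, there is a genuine gap in your verification of the recurrence hypothesis.

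You assert that $\whf_\vep|_{\La_0}$ preserves a volume form on a relatively compact manifold and hence Poincar\'e recurrence gives that almost every point of $V$ is recurrent.  The problem is that $\La_\vep$ is only \emph{locally} invariant: it is a manifold with boundary (coming from $\partial\cB_*$), and orbits of $f_\vep$ may exit through that boundary.  Volume preservation by itself does not yield recurrence unless the dynamics is confined to a set of finite invariant measure; an orbit that leaves $\La_\vep$ is simply no longer governed by $\whf_\vep|_{\La_0}$, and Poincar\'e's theorem says nothing about it.  So the recurrence clause of Theorem \ref{Thm_scattering_shadowing} is not automatically satisfied.

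The paper closes this gap by a dichotomy that you are missing.  One fixes a small product neighborhood $D_I\times D_\theta$ of $(I_0,\theta_0)$ on which the transversality of \textbf{(H3b)} persists, and sets $V=\bigcup_{k\ge 0}\whf_\vep^{\,k}(D_I\times D_\theta)$.  If $V$ has finite measure, it is forward-invariant with finite volume, so Poincar\'e recurrence applies and Theorem \ref{Thm_scattering_shadowing} runs exactly as you outlined.  If $V$ has infinite measure, then already the inner dynamics on $\La_\vep$ produces orbits whose $I$-coordinate drifts by more than $\rho$ (since the $\theta$-directions are compact, unbounded measure forces escape in $I$), and one obtains diffusion without invoking the scattering map at all.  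Your argument should incorporate this alternative; once it does, the remaining estimates you wrote are essentially the same as the paper's.
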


The proof is postponed to Appendix \ref{Appendix_section}, which comes mainly from  \cite{Gidea_Delallave_seara2020}. 
Finally, we also refer to \cite{Gidea_Delallave_seara2020_skip} for the result using accessibility and several scattering maps.

\section{Proof of main result}\label{section_proof}
This section aims to prove our main theorems given in Section \ref{sec_mainresult}. Thanks to Theorem \ref{existence_orbits}, it is sufficient to show that the non-degeneracy assumptions \textbf{(H3a)}--\textbf{(H3b)}  are  generic properties in the analytic category. Note  that  assumption \textbf{(H3b)}  depends on the choice of the function $\tau^*$ given in assumption \textbf{(H3a)}.

Now, we first  verify that \textbf{(H3a)}  is a generic property in the analytic category. 
According to \eqref{Def_PM_function}, the Melnikov potential is 
\begin{equation}\label{intL0}
\begin{split}
L(\tau, I,\vph,s)=-\int^\infty_{-\infty}\Big[ H_1(p^0(\tau+t\bar{1}),q^0(\tau+t\bar{1}),I,\vph+\omega(I)t, s+t) -H_1(0,0,I,\vph+\omega(I)t, s+t)\Big]\,dt.
\end{split}
\end{equation}
where $p^0(\tau+t\bar{1})=\big(p_1^0(\tau_1+t),\cdots,p_n^0(\tau_n+t)\big)$ and $
q^0(\tau+t\bar{1})=\big(q_1^0(\tau_1+t),\cdots,q_n^0(\tau_n+t)\big)$.
\begin{The}\label{H3a_genericity}
Let $\kappa>0$.
For each point $(I_0,\vph_0,s_0)\in \cB_*\times\T^{d}\times\T$, the set of perturbations $H_1$  satisfying the following non-degeneracy property  \textrm{\bf{(R)}}, is  $C^\omega_\kappa$ dense and $C^3$ open (which also implies $C^\omega_\kappa$-openness). 
\begin{itemize}[\bf(R)]
	\item there exists an open set $U^-:=\mathcal{I}\times\mathcal{J}$ containing the point $(I_0,\vph_0,s_0)$ with $\mathcal{I}$ a ball in $\cB_*$ and $\mathcal{J}$ an open set in $\T^{d}\times\T$, such that for each point $(I,\vph,s)\in U^-$ the map
$$\tau\in\R^n\longmapsto L(\tau,I,\vph,s)$$
 has a non-degenerate critical point $\tau^*$, i.e., 
$$\frac{\partial L}{\partial \tau}(\tau^*,I,\vph,s)=0, \quad \textup{det}\left[\frac{\partial^2 L}{\partial \tau_i\tau_j}(\tau^*,I,\vph,s)\right]_{1\leq i,j\leq n}\neq 0.$$
By the implicit function theorem, $\tau^*$ is locally given by   
$$\tau^*=\tau^*(I,\vph,s).$$ 
\end{itemize}  
\end{The}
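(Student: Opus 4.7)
The plan is to handle $C^3$-openness by a direct implicit function theorem argument using the exponential decay of the unperturbed homoclinic profiles, and $C^\omega_\kappa$-density by a parametric Sard--Smale transversality argument applied to an explicit finite-parameter family of analytic perturbations $\delta H_1(q, \vph, t)$ depending only on the angle variables.

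For openness, the key observation is that since $(p^0(\tau+t\bar{1}), q^0(\tau+t\bar{1})) \to 0$ exponentially as $|t| \to \infty$, the integrand in \eqref{intL0} decays exponentially in $|t|$ and one can differentiate twice under the integral sign. This gives the local estimate $\|\partial^{\alpha}_\tau L\| \leq C\|H_1\|_{C^2}$ for $|\alpha|\leq 2$, uniformly on compact subsets of $(\tau, I, \vph, s)$. Hence if $H_1$ satisfies \textbf{(R)}, so that $L$ has a non-degenerate critical point $\tau^*$ at $(I_0, \vph_0, s_0)$, then the implicit function theorem applied to $\partial_\tau L = 0$ produces a family of non-degenerate critical points $\tau^*(I, \vph, s)$ on a neighborhood $U^-$ of $(I_0, \vph_0, s_0)$, which persists for any $H_1'$ with $\|H_1' - H_1\|_{C^3}$ sufficiently small. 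Since $C^\omega_\kappa$-convergence implies $C^3$-convergence on $\tcM$, $C^3$-openness automatically gives $C^\omega_\kappa$-openness.

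For density, fix $H_1 \in C^\omega_\kappa$ and set $L_0(\tau) := L(\tau, I_0, \vph_0, s_0)$. The plan is to consider the $N$-parameter family of analytic perturbations
\[
\delta H_1(q, \vph, t; a) = \sum_{i=1}^N a_i\, g_i(q_i)\, \cos(\ell_i \cdot \vph + m_i t + \beta_i),
\]
with $g_i$ analytic on $\T$, integer vectors $(\ell_i, m_i) \in \Z^d \times \Z$ chosen so that the resonance $\Omega_i := \ell_i \cdot \omega(I_0) + m_i \neq 0$, and phases $\beta_i \in \R$. A change of variable $u = t + \tau_i$ in the Melnikov integral \eqref{intL0} yields an explicit expression
\[
\delta L_0(\tau; a) = \sum_{i=1}^N a_i\, A_i\, \cos(\Omega_i \tau_i + \psi_i),
\]
where the amplitudes $A_i$ are essentially Fourier transforms $\int_\R [g_i(q_i^0(u)) - g_i(0)]\, e^{\pm i \Omega_i u}\,du$ of $g_i \circ q_i^0 - g_i(0)$ at $\Omega_i$. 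For generic $g_i$ and $\Omega_i$ these amplitudes are non-zero. Taking $N \geq n$ terms (one per index $i = 1,\ldots,n$, with appropriate $\ell_i, m_i, \beta_i$), the gradients of the individual contributions span $\R^n$ at a suitable reference point $\tau^*$. Applying the parametric Sard theorem to the map $(a, \tau) \mapsto \nabla_\tau\bigl(L_0 + \delta L_0(\cdot; a)\bigr)(\tau) : \R^N \times \R^n \to \R^n$, one concludes that for Lebesgue-almost-every small $a \in \R^N$, every critical point of $L_0 + \delta L_0(\cdot; a)$ is non-degenerate. Existence of at least one critical point in the chosen region follows from the quasi-periodicity \eqref{periodicity_L} of $L_0$ in the direction $\bar{1}$, or by enlarging $N$ so that the trigonometric terms create one. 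Since $\|\delta H_1(\cdot; a)\|_{C^\omega_\kappa} \to 0$ as $a \to 0$, this produces $H_1 + \delta H_1$ arbitrarily close to $H_1$ in $C^\omega_\kappa$ satisfying \textbf{(R)}.

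The main obstacle is verifying the submersion condition required by the parametric Sard step: one must check that the explicit family $\{\Phi_i\}$ generates enough directions in the variation of $\nabla_\tau L_0$ at the chosen $\tau^*$. This reduces to the non-vanishing of the Fourier-type integrals $\int_\R [g_i(q_i^0(u)) - g_i(0)]\, e^{\pm i \Omega_i u}\,du$ for generic analytic $g_i$ and frequencies $\Omega_i$, which can be verified by explicit analytic estimates using that $q_i^0(u)$ is analytic in $u$ with exponential decay at $\pm\infty$. A secondary subtlety is guaranteeing existence (not merely non-degeneracy) of critical points after perturbation, handled via the quasi-periodicity of $L_0$ in the $\bar{1}$-direction or by adding further explicit trigonometric modes to the family.
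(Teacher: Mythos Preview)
Your openness argument is fine and matches the paper's. The density argument via parametric Sard, however, has two real gaps. First, the submersion hypothesis must hold at \emph{every} zero of $F(a,\tau)=\nabla_\tau(L_0+\delta L_0)$, not just at one chosen reference $\tau^*$; with only $N=n$ modes, the $i$-th column of $D_aF$ is $-A_i\Omega_i\sin(\Omega_i\tau_i+\psi_i)\,e_i$, which vanishes on a codimension-one set of $\tau$'s, so your reduction to ``$A_i\neq 0$'' is incomplete (this is patchable by doubling the modes with a $\pi/2$ phase shift, but you did not do so). Second, and more seriously, existence of a critical point does \emph{not} follow from the relation \eqref{periodicity_L} when $n>1$: that identity only says $L_0$ factors through a compact quotient in the $\bar{1}$-direction, hence is bounded, and bounded smooth functions on $\R^n$ with $n\ge 2$ need not have critical points. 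Even granting Sard, you would only learn that for a.e.\ small $a$ the critical set is either empty or entirely non-degenerate, and nothing you wrote rules out the former; the fallback ``enlarging $N$ to create one'' is precisely the missing construction.

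The paper's argument is instead fully constructive, in two steps. Step~1 invokes Lemma~\ref{Lem_inf_0} (a short gradient-flow argument showing $\inf_\tau\|\partial_\tau L\|=0$ from boundedness of $L$) to locate an \emph{approximate} critical point $\tau^*$, then adds an explicit perturbation $\delta_2\sum_i f_i(q_i)\cos(t+b_i)$ with phases $b_i$ chosen so that $\tau^*$ becomes an \emph{exact} critical point of the new Melnikov potential $L^{\delta_2}$. Step~2 adds a second layer $\delta_3\sum_i g_i(q_i)\cos(t+c_i)$ with $c_i$ chosen so that the gradient of this term \emph{vanishes} at $\tau^*$ --- hence the critical point does not move --- while the Hessian picks up the diagonal increment $\delta_3\,\mathrm{diag}(\lambda_1,\dots,\lambda_n)$ with all $\lambda_i\neq 0$. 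The determinant $v(\delta_3)=\det\bigl(\partial^2_{\tau\tau}L^{\delta_2}(\tau^*)+\delta_3\,\mathrm{diag}(\lambda_i)\bigr)$ is then a degree-$n$ polynomial in $\delta_3$ with leading coefficient $\prod_i\lambda_i\neq 0$, so it is nonzero for all but finitely many small $\delta_3$. The non-vanishing of the Fourier-type integrals you identified is exactly Lemma~\ref{lem_nonzero}, and is what makes the $A_i$ and $\lambda_i$ nonzero. This two-step trick (first pin down $\tau^*$, then perturb the Hessian without moving $\tau^*$) sidesteps both gaps above and replaces the Sard machinery by elementary algebra.
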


Before proving it, we need some lemmas.

\begin{Lem}\label{lem_nonzero}
	For each $\kappa>0$ and each $a\in\R\setminus\{0\}$, there is a sequence of real analytic functions $f_l\in C^\omega_\kappa(\T)$, $l=1,\cdots,n$, satisfying 
	\begin{equation*}
		\int_{-\infty}^\infty\big[ f_l(q_l^0(t))-f_l(0)\big]\cdot e^{iat}\,dt\neq 0,
	\end{equation*}
where $e^{iat}=\cos at+i\sin at$, and $q_l^0(t)$ is the $q_l$-coordinate of the unperturbed homoclinic orbit $(p^0, q^0)$. 
\end{Lem}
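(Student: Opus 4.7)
\emph{The plan} is to argue by contradiction. Fix $l\in\{1,\dots,n\}$ and $a\neq 0$, and define the $\CC$-linear functional
\[
\Lambda(f):=\int_{-\infty}^\infty [f(q_l^0(t))-f(0)]\,e^{iat}\,dt.
\]
The integrand decays at rate $\lambda_l$ as $|t|\to\infty$ (since $q_l^0(t)\to 0$ on $\T$ exponentially), so $\Lambda$ is absolutely convergent and bounded on $C^1(\T)$. Because $C^\omega_\kappa(\T)$ contains every trigonometric polynomial, if the lemma failed for this pair $(l,a)$ then the complex-linear extension of $\Lambda$ would have to annihilate $e^{ikq}$ for every $k\in\Z$.

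The key computational step I would take is an integration by parts in $t$ applied to $\Lambda(e^{ikq})$, $k\neq 0$. Since $\dot q_l^0(t)=p_l^0(t)$ never vanishes except at $t=\pm\infty$ (the separatrix does not turn around), $q_l^0$ lifts to a real-analytic diffeomorphism from $\R$ onto an interval of length $2\pi$, which we may take to be $(0,2\pi)$; denote its inverse by $t_l:(0,2\pi)\to\R$. The exponential decay of $e^{ikq_l^0(t)}-1$ at $t=\pm\infty$ makes the boundary terms in the integration by parts vanish, and changing variables $q=q_l^0(t)$ then collapses the integral to
\[
\Lambda(e^{ikq})=-\frac{k}{a}\int_0^{2\pi}e^{ikq}\,e^{iat_l(q)}\,dq.
\]
Consequently the contradiction hypothesis is equivalent to the vanishing of every nonzero Fourier coefficient of the bounded continuous function $q\mapsto e^{iat_l(q)}$ on $(0,2\pi)$.

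This would force $e^{iat_l(q)}$ to be a.e.\ constant, and by continuity truly constant on $(0,2\pi)$. But $t_l$ is a homeomorphism from $(0,2\pi)$ onto $\R$, so for $a\neq 0$ the function $q\mapsto at_l(q)$ is unbounded and $e^{iat_l(q)}$ is plainly non-constant---the required contradiction. Some $\Lambda(e^{ikq})$ must therefore be nonzero, and taking $f_l$ to be whichever of $\cos(kq)$, $\sin(kq)$ makes $\Lambda$ non-vanishing produces the desired real-analytic function in $C^\omega_\kappa(\T)$; the argument runs independently for each $l\in\{1,\dots,n\}$, yielding the full sequence. The main obstacle, such as it is, lies in \emph{seeing} the reduction: once one realizes that integration by parts combined with the change of variables $q=q_l^0(t)$ recasts the statement as a Fourier-completeness question on $(0,2\pi)$, no analytic subtlety remains, and in particular the width $\kappa$ of the strip of analyticity plays no role in the proof.
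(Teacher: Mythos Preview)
Your proof is correct and takes a genuinely different route from the paper's. The paper argues that if $\Lambda$ killed every trigonometric monomial it would kill every $C^1$ function on $\T$, then manufactures a $C^1$ bump $h$ supported on a short arc $J\subset\T\setminus\{0\}$ that the homoclinic crosses exactly once during a time window $[\bar t-\sigma,\bar t+\sigma]$ with $\sigma<\pi/(4|a|)$; positivity of $h(q_l^0(t))\cos(at)$ on that window forces $\Lambda(h)\neq 0$. You instead stay with the exponentials: integrating by parts and using the monotone reparametrization $q=q_l^0(t)$ you reduce $\Lambda(e^{ikq})=0$ for all $k\neq 0$ to the vanishing of every nonzero Fourier coefficient of the bounded function $q\mapsto e^{iat_l(q)}$ on $(0,2\pi)$, which is impossible since $t_l$ is unbounded. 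Your argument is tighter---no passage to $C^1$, no bump function, no ``small window'' estimate---and it makes the role of the monotonicity of $q_l^0$ (i.e.\ $p_l^0\neq 0$ for finite $t$, which follows from the uniqueness of the maximum of $V_l$) completely explicit; the paper uses the same geometric fact tacitly when it asserts that the orbit crosses $J$ only once. The paper's approach, on the other hand, would adapt more readily to situations where the separatrix is not globally monotone but still has an arc it visits exactly once, so it is slightly more robust in principle even if both proofs are equivalent in the setting at hand.
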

\begin{proof}
	For simplicity we only verify it for $l=1$ and the others are similar.  
It is sufficient to prove that there exists a  function  
$f_1$ in the set $X:=$ $\{\cos kx, \sin kx ~:~ k\in\Z\}$, such that 
	\begin{equation*}
		\int_{-\infty}^\infty\big[ f_1(q_1^0(t))-f_1(0)\big]\cdot e^{iat}\,dt\neq 0.
	\end{equation*}

Suppose that for all $f\in X$,  
$\int_{-\infty}^\infty[ f(q_1^0(t))-f(0)]\cdot e^{iat}\,dt$ $= 0,$ then using  the theory of Fourier analysis it is not difficult to prove that \begin{equation}\label{zero_forall_f}
		\int_{-\infty}^\infty\big[ \phi(q_1^0(t))-\phi(0)\big]\cdot e^{iat}\,dt= 0,\quad \text{~for each~} \phi\in C^1(\T).
	\end{equation}

On the other hand, recalling that $q_1^0(t)$ converges exponentially to $0$ as $t$ tends to $\pm\infty$, we can find a small closed interval  $J\subset \T\setminus\{0\}$, such that the orbit $q^0_1(t)$ passes through $J$ \emph{when and only when}  $t\in [\overline{t}-\sigma, \overline{t}+\sigma]$ for some $\overline{t}\in\R,  \sigma>0$. 
By narrowing the interval $J$ if necessary, we can let $\sigma <\frac{\pi}{4a}$. 

Let us pick a non-negative  function $h\in C^1(\T)$ satisfying  $h(0)=0$ and the support $\text{supp} h=J$, then  
\begin{equation*}
		\int_{-\infty}^\infty\big[ h(q_1^0(t))-h(0)\big]\cdot e^{iat}\,dt=\int_{\overline{t}-\sigma}^{\overline{t}+\sigma} h(q_1^0(t))\cdot e^{iat}\,dt=e^{ia\overline{t}}\int_{-\sigma}^{\sigma} h(q_1^0(t+\overline{t}))\cdot e^{iat}\,dt\neq 0,
	\end{equation*}
since $ h(q_1^0(t+\overline{t}))$ and $\cos at$ are both positive when $t\in(-\sigma,\sigma)$. This  contradicts  \eqref{zero_forall_f}. Now the lemma follows immediately from what we have proved. 
\end{proof}

\begin{Lem}\label{Lem_inf_0}
	For every point $(I,\vph,s)\in \cB_*\times\T^d\times\T$, the Melnikov potential $L(\tau,I,\vph,s)$ satisfies
	\begin{equation}\label{inf_0}
	\inf\limits_{\tau\in\R^n}\left\|\frac{\partial L}{\partial\tau}(\tau,I,\vph,s)\right\|=0,
\end{equation}
where the norm $\left\|\frac{\partial L}{\partial\tau}\right\|:=\sum_{i=1}^n \left|\frac{\partial L}{\partial\tau_i}\right|$.
\end{Lem}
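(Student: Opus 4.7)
The plan is a ``boundedness plus gradient flow'' argument. First I would establish that for each fixed $(I,\vph,s)$, the function $\tau \mapsto L(\tau,I,\vph,s)$ is uniformly bounded on $\R^n$. By condition \textbf{(H2)} and \eqref{char_exp}, each coordinate of the unperturbed homoclinic orbit decays exponentially: $\mathrm{dist}\bigl((p_i^0(\sigma), q_i^0(\sigma)), (0,0)\bigr) \leq C_i\, e^{-\lambda_i |\sigma|}$. Since $H_1 \in C^\omega_\kappa$, Cauchy estimates on the complex strip $\tcM_\kappa$ yield a uniform Lipschitz constant $M$ for $H_1$ in its $(p,q)$ arguments. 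Hence the integrand in \eqref{intL0} is dominated by $M \sum_i C_i\, e^{-\lambda_i |\tau_i + t|}$; integrating in $t$ gives $|L(\tau, I, \vph, s)| \leq 2M \sum_i C_i/\lambda_i$, uniformly in $\tau$ (and, incidentally, in $(\vph, s)$).

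Next, I would suppose for contradiction that $\inf_{\tau \in \R^n} \|\partial_\tau L(\tau,I,\vph,s)\| = \delta > 0$, where $\|\cdot\|$ is the $\ell^1$ norm from the statement. Equivalence of norms gives $\|\nabla_\tau L\|_{\ell^2} \geq \delta/\sqrt n$ everywhere. By the regularity remark, $L$ is at least $C^3$ in $\tau$, so the normalized vector field $V(\tau) := \nabla_\tau L(\tau,I,\vph,s)/\|\nabla_\tau L(\tau,I,\vph,s)\|_{\ell^2}$ is continuous on $\R^n$ with $\|V(\tau)\|_{\ell^2} = 1$. Standard ODE theory then provides a global solution $\tau : \R \to \R^n$ of $\dot\tau = V(\tau)$ (solutions cannot escape in finite time because $V$ is bounded). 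Along this solution,
$$\frac{d}{dt} L(\tau(t), I, \vph, s) \;=\; \nabla_\tau L \cdot V \;=\; \|\nabla_\tau L\|_{\ell^2} \;\geq\; \delta/\sqrt{n},$$
which forces $L(\tau(t), I, \vph, s) \to +\infty$ as $t \to +\infty$, contradicting the uniform boundedness established in the first step. Hence the infimum must vanish.

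I do not anticipate any real obstacle. All substantive work lies in the first step, which is a direct Lipschitz-times-exponential-decay estimate for the absolutely convergent Melnikov integral. The gradient-flow step is the classical observation that a bounded $C^1$ function on $\R^n$ cannot have gradient uniformly bounded away from zero; its role here is just to convert the analytic input (boundedness of $L$) into the desired analytic conclusion (vanishing infimum of $\|\partial_\tau L\|$).
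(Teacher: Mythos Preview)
Your proposal is correct and follows essentially the same approach as the paper: bound $L$ uniformly in $\tau$ via a Lipschitz-times-exponential-decay estimate on the Melnikov integral, then run a gradient flow to contradict boundedness under the hypothesis $\inf\|\partial_\tau L\|>0$. The only cosmetic difference is that the paper flows along the unnormalized gradient $\dot x=\partial_\tau L$ (and therefore also records the bound $\|\partial_\tau L\|\le C'M(\hat I)$ to ensure completeness), whereas you normalize $V=\nabla_\tau L/\|\nabla_\tau L\|_{\ell^2}$ so that completeness is automatic; either way the derivative of $L$ along the trajectory is bounded below by a positive constant, forcing the same contradiction.
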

\begin{proof}
Fix a point $(\hat{I},\hat{\vph},\hat{s})\in \cB_*\times\T^d\times\T$. If $\frac{\partial L}{\partial \tau}(\tau,\hat{I},\hat{\vph},\hat{s})=0$ is attained at some point $\tau\in\R^n$, then we have finished the proof.

For the case where $\frac{\partial L}{\partial \tau}(\tau,\hat{I},\hat{\vph},\hat{s})$ is always non-zero,   we assume by contradiction that there is  $\delta>0$ such that 
\begin{equation}\label{contradiction_delta}
	\left\|\frac{\partial L}{\partial\tau}(\tau,\hat{I},\hat{\vph},\hat{s})\right\|\geq \delta,\quad \text{for all~}\tau\in\R^n.
\end{equation}
Observe that the homoclinic orbit $(p^0,q^0)$ is contained in a compact and convex set $D\subset\R^n\times\T^n$, so we can define 
\[M(\hat{I}):= \max_{i=1,\cdots,n }\left\{\sup_{\substack{ (p,q)\in D,~ (\vph,s)\in\T^{d}\times\T}}\left| \frac{\partial H_1}{\partial p_i}(p,q,\hat{I},\vph,s)  \right|+\left| \frac{\partial H_1}{\partial q_i}(p,q,\hat{I},\vph,s)  \right|\right\},
\]
which is finite and bounded. Using the mean value theorem to \eqref{intL0}, 	for every $\tau\in\R^n$ we have
\begin{equation}\label{L_bounded}
	\begin{split}	
\big|L(\tau, \hat{I},\hat{\vph},\hat{s})\big|\leq & \sum_{i=1}^n \int^{\infty}_{-\infty}M(\hat{I})\cdot
\big|p^0_i(t+\tau_i)\big|+M(\hat{I})\cdot
\big|q^0_i(t+\tau_i)\big|\,dt\\
= &  M(\hat{I})\cdot \sum_{i=1}^n \int^{\infty}_{-\infty}
\big|p^0_i(t)\big|+
\big|q^0_i(t)\big|\,dt\leq  CM(\hat{I}),
	\end{split}
\end{equation}
where $C$ is a constant, and the last inequality is a consequence of the fact that $(p_i^0(t),q_i^0(t))$  converges exponentially to $(0,0)$ as $t\to \pm\infty$. Similarly, as the derivative $(\dot{p}_i^0(t),\dot{q}_i^0(t))$ also converges exponentially to $(0,0)$ as $t\to \pm\infty$,	
we deduce that
\begin{equation}\label{partial_L_bounded}
	\begin{split}
\left\|\frac{\partial L}{\partial\tau}(\tau,\hat{I},\hat{\vph},\hat{s})\right\|=&
\sum_{i=1}^n\left|\frac{\partial L}{\partial \tau_i}(\tau, \hat{I},\hat{\vph},\hat{s})\right|\\\leq & M(\hat{I})\cdot\sum_{i=1}^n \int^{\infty}_{-\infty} |\dot{p}^0_i(t+\tau_i)|+|\dot{q}^0_i(t+\tau_i)|\,dt\\
= & M(\hat{I})\cdot\sum_{i=1}^n \int^{\infty}_{-\infty} |\dot{p}^0_i(t)|+|\dot{q}^0_i(t)|\,dt\leq  C'M(\hat{I}),	
	\end{split}
\end{equation}
where $C'>0$ is a constant.

Recall that $(\hat{I},\hat{\vph},\hat{s})$ is fixed, then we consider an auxiliary differential equation:
\[
\dot{x}= \frac{\partial L}{\partial\tau}(x,\hat{I},\hat{\vph},\hat{s}), \quad x\in\R^n.
\]
From \eqref{partial_L_bounded} we see that the vector field above is bounded, which implies the flow is complete, that is all solutions are well defined for $t\in\R$. We pick one solution $x(t):\R\to\R^n$ and consider the one-dimensional function $t\longmapsto L(x(t),\hat{I},\hat{\vph},\hat{s})$. Then, for any $T>0$ we obtain 
\begin{align}\label{LT_est}
\begin{aligned}
	L(x(T),\hat{I},\hat{\vph},\hat{s})-L(x(0),\hat{I},\hat{\vph},\hat{s})=&\int_{0}^T\sum_{i=1}^n\left|\frac{\partial L}{\partial \tau_i}(x(t), \hat{I},\hat{\vph},\hat{s})\right|^2\, dt\\
	\geq & \frac{1}{n}\int_{0}^T \left\|\frac{\partial L}{\partial\tau}(x(t),\hat{I},\hat{\vph},\hat{s})\right\|^2\,dt\\
	\geq & \frac{\delta^2}{n}T,
\end{aligned}
\end{align}
Here, we have used \eqref{contradiction_delta} in the last inequality.
As $T$ can be arbitrarily large, the estimate \eqref{LT_est} yields 
$\sup_{\tau\in\R^n}|L(\tau,\hat{I},\hat{\vph},\hat{s})|=+\infty$.
This contradicts the boundedness estimate \eqref{L_bounded}. Therefore, we finish the proof of \eqref{inf_0}.
\end{proof}

\begin{Rem}
The estimates \eqref{L_bounded}--\eqref{partial_L_bounded} also imply that the Melnikov potential  $L$ and its partial derivative  $\partial L/\partial\tau$ are uniformly bounded in $(\tau,\vph,s)$, that is for each $I$ there is a constant $C(I)>0$ such that
\[ |L(\tau,I,\vph,s)|\leq C(I),\quad \left\|\frac{\partial L}{\partial\tau}(\tau,I,\vph,s)\right\|\leq C(I),\quad \text{for all~} (\tau,\vph,s)\in\R^n\times\T^{d}\times\T.\]  	
\end{Rem}
\begin{Rem}[\textbf{Existence of critical points}]
	We stress that in the case when $n=1$, the identity $L(\tau,I,\vph,s)=L(0,I,\vph-\omega(I)\tau, s-\tau)$ holds, so the one-dimensional map $\tau\longmapsto L(\tau,I_0,\vph_0,s_0)$ always has critical points for each fixed $(I_0,\vph_0,s_0)$. 
	See \cite{DLS2006}. 
	
In the case when  $n>1$,  for each $I_0$ there exists $(\vph_0,s_0)$ such that
 the map $\tau\in\R^n\longmapsto L(\tau,I_0,\vph_0,s_0)$  has critical points. See \cite{Gidea_delaLlave2018}.
\end{Rem}

Now, we proceed to prove Theorem \ref{H3a_genericity}. 

\begin{proof}[Proof of Theorem \ref{H3a_genericity}]
The openness is evident. In fact, a non-degenerate
critical point for the map $\tau\mapsto L(\tau,I,\varphi,s)$ remains non-degenerate after a sufficiently small $C^2$ perturbation, which therefore gives $C^3$-openness and also $C^\omega_\kappa$-openness. Here, $C^3$-smoothness is necessary because $H_1$ needs to satisfy the lowest regularity (i.e. $C^3$) of the unperturbed system $H_0$  so that all the results obtained in Section \ref{section_mechanism} are still valid.

Then it remains to show that the existence of non-degenerate critical points is a dense property in the $C^\omega_\kappa$ topology. 
The proof splits into two steps.

\textbf{Step 1:} Fix the point $(I_0,\vph_0,s_0)$. If the map $\tau\longmapsto L(\tau,I_0,\vph_0,s_0)$ has no critical points, 
 we will show that there is an arbitrarily  small perturbation to $H_1$ to create critical points. For this purpose, we take a small number $\delta>0$ and add a small perturbation  $\delta_2 H_2$ to $H_1$ with $\delta_2\in(0,\delta/2)$, and $H_2$ is of the form
$$H_2(q,t)=\sum_{i=1}^n f_i(q_i)\cdot\cos (t+b_i)\in C^\omega_\kappa
,$$ 
the   coefficients $\{b_i\}_{i=1}^n$ and the analytic functions $\{f_i\in C^\omega_\kappa(\T)\}_{i=1}^n$ will be determined later. By multiplying a constant if necessary, we can always let $\|H_2\|_\kappa<1$.
Hence the new Melnikov potential, denoted by $L^{\delta_2}$, associated to the  Hamiltonian $H_0+\vep (H_1+ \delta_2 H_2)$ is
\begin{align}\label{Ldelta2}
\begin{aligned}
	L^{\delta_2}(\tau,I,\vph,s)&=L(\tau,I,\vph,s)-\delta_2 \sum_{i=1}^n\int^{\infty}_{-\infty}\Big(f_i(q^0_i(t+\tau_i))-f_i(0)\Big)\cdot\cos(s+t+b_i)\, dt\\
	&=L(\tau,I,\vph,s)-\delta_2 \sum_{i=1}^n\int^{\infty}_{-\infty}\Big(f_i(q^0_i(t))-f_i(0)\Big)\cdot\cos(s-\tau_i+b_i+t)\, dt\\
	&=L(\tau,I,\vph,s)+\delta_2 \sum_{i=1}^n\Big[A_{i,1}\cdot\cos(s-\tau_i+b_i)- A_{i,2}\cdot\sin (s-\tau_i+b_i)\Big]\\
	&=L(\tau,I,\vph,s)+\delta_2\sum_{i=1}^n\sqrt{A^2_{i,1}+A_{i,2}^2}\cdot\cos(s-\tau_i+b_i+\alpha_i),	
\end{aligned}
\end{align}
where for each $i=1,\cdots, n$, the constants $A_{i,1},A_{i,2}$ are given by
\[A_{i,1}=-\int^\infty_{-\infty}\Big(f_i(q_i^0(t))-f_i(0)\Big)\cos t \,dt,\quad A_{i,2}=-\int^\infty_{-\infty}\Big(f_i(q_i^0(t))-f_i(0)\Big)\sin t \,dt
\]
and the angle $\alpha_i=\arccos(A_{i,1}/\sqrt{A^2_{i,1}+A_{i,2}^2})$. We can ensure each $A^2_{i,1}+A_{i,2}^2\neq 0$ by suitably choosing $f_i$ (see Lemma \ref{lem_nonzero}). This gives rise to
\begin{equation*}\label{par_der_1}
\frac{\partial L^{\delta_2}}{\partial \tau}(\tau,I,\vph,s)=(x_1,\cdots,x_n)^\intercal
\end{equation*}
with 
\begin{equation*}\label{underterm_b}
x_i=x_i(\tau,I,\vph,s)=
\frac{\partial L}{\partial \tau_i}(\tau,I,\vph,s)+\delta_2\sqrt{A^2_{i,1}+A_{i,2}^2}\cdot\sin(s-\tau_i+b_i+\alpha_i).	
\end{equation*}
In particular, for the point $(I_0,\vph_0,s_0)$ we can invoke Lemma \ref{Lem_inf_0} to find a point $\tau^*$ satisfying
\begin{align*}
	\left\|\frac{\partial L}{\partial\tau}(\tau^*,I_0,\vph_0,s_0)\right\|<\min_{1\leq i\leq n}~\delta_2\sqrt{A^2_{i,1}+A_{i,2}^2}.
\end{align*}
Then for each $i$ we can find $b_i\in[0,2\pi]$ such that $x_i(\tau^*,I_0,\vph_0,s_0)=0$, which therefore yields
\[
\frac{\partial L^{\delta_2}}{\partial \tau}(\tau^*,I_0,\vph_0,s_0)=0.
\]

\textbf{Step 2:} We have already shown the existence of critical points is a dense property. In the above argument, we may pick  $H_2=0$ whenever the map $\tau\to L(\tau,I_0,\vph_0,s_0)$ already has critical points.

Now, let us turn to check the non-degeneracy of the critical points.
If $(\tau^*,I_0,\vph_0,s_0)$ is a degenerate critical point of $L^{\delta_2}$, i.e.,
\begin{align}\label{L_degeneracy}
\frac{\partial  L^{\delta_2}}{\partial \tau}(\tau^*,I_0,\vph_0,s_0)=0, \quad \text{det}\left[\frac{\partial^2  L^{\delta_2}}{\partial \tau_i\tau_j}(\tau^*,I_0,\vph_0,s_0)\right]_{1\leq i,j\leq n}= 0.
\end{align}
then we have to  continue to add a small perturbation to create non-degeneracy.
Indeed, we may pick a perturbation $\delta_3 H_3$ to $H_1+\delta_2H_2$ where  $\delta_3\in(0,\delta/2)$ and
\[
\delta_3 H_3(q,t)=\delta_3 \sum_{i=1}^n   g_i( q_i) \cdot\cos (t+c_i).
\]
 The value $\delta_3$, the coefficients $\{ c_i\in\R\}_{i=1}^n$ and the analytic functions $\{g_i\in C^\omega_\kappa(\T)\}_{i=1}^n$ will be determined later. Without loss of generality we let $\|H_3\|_\kappa <1$. 
Using arguments analogous to \eqref{Ldelta2},
 the new Melnikov potential, denoted by $L^{\delta_2,\delta_3}$, associated to the  Hamiltonian $H_0+\vep (H_1+ \delta_2 H_2+\delta_3 H_3)$ is
\begin{equation}
\begin{split}
L^{\delta_2,\delta_3}(\tau,I,\vph,s)&=L^{\delta_2}(\tau,I,\vph,s)+\delta_3\sum_{i=1}^n \Big[ B_{i,1} \cdot\cos(s-\tau_i+c_i)-B_{i,2}\cdot\sin(s-\tau_i+c_i)\Big],\\
&=L^{\delta_2}(\tau,I,\vph,s)+\delta_3 \sum_{i=1}^n\sqrt{B_{i,1}^2+B_{i,2}^2}\cdot\cos(s-\tau_i+c_i+\beta_i),
\end{split}
\end{equation}
where the constants $B_{i,1}, B_{i,2}$ are
\[B_{i,1}=-\int^\infty_{-\infty}\Big(g_i(q_i^0(t))-g_i(0)\Big)\cos t \,dt,\quad 
B_{i,2}=-\int^\infty_{-\infty}\Big(g_i(q_i^0(t))-g_i(0)\Big)\sin t \,dt
\]
and $\beta_i=\arccos(B_{i,1}/\sqrt{B_{i,1}^2+B_{i,2}^2})$. Here, we can ensure each $B^2_{i,1}+B^2_{i,2}\neq 0$ by suitably choosing $g_i$, see Lemma \ref{lem_nonzero}.

For each $i=1,\cdots,n$  we  take
\begin{equation}\label{cci}
	c_i:=-s_0+\tau^*_i-\beta_i,
\end{equation}
then  the Hessian matrix of the map $\tau\longmapsto L^{\delta_2,\delta_3}(\tau,I_0,\vph_0,s_0)$  is
\[
\left[
  \frac{\partial^2 L^{\delta_2,\delta_3}}{\partial \tau_i\tau_j}(\tau^*,I_0,\vph_0,s_0)  
\right]_{1\leq i,j\leq n}
= \left[\frac{\partial^2 L^{\delta_2}}{\partial \tau_i\tau_j}(\tau^*,I_0,\vph_0,s_0)\right]_{1\leq i,j\leq n}+  
\left[
\begin{array}{cccc}
\delta_3 \lambda_1 &     &       &    \\
  & \delta_3\lambda_2  &       &     \\
  &     &\ddots &    \\
  &     &       & \delta_3 \lambda_n  
\end{array}\right]
\]
where the second term on the right-hand side is a diagonal matrix, and as a result of  \eqref{cci}, 
\begin{equation}\label{yppt}
	\lambda_i=-\sqrt{B_{i,1}^2+B_{i,2}^2}\cdot\cos(s_0-\tau^*_i+c_i+\beta_i)=- \sqrt{B_{i,1}^2+B_{i,2}^2}\neq 0.
\end{equation}

Denoting 
\[
v(\delta_3):=\text{det}\left[\frac{\partial^2 L^{\delta_2,\delta_3}}{\partial \tau_i\tau_j}(\tau^*,I_0,\vph_0,s_0)\right]_{1\leq i,j\leq n}
\]
In particular,  $v(0)=0$ as a consequence of \eqref{L_degeneracy}. It is not difficult to check that the one-dimensional function $v:\delta_3\to\R$ is a polynomial function of degree $n$,  and the leading term of $v$ is
$\bigg(\prod_{i=1}^n\lambda_i \bigg)\delta_3^n.$ Thanks to \eqref{yppt}, the leading coefficient is  non-zero, which
 implies that the polynomial  $v$  has at most $n$ zeros. Consequently, we can choose arbitrarily small $\delta_3>0$ such that 
 \[v(\delta_3)\neq 0.\]

In conclusion, we have constructed a perturbation $\delta_2H_2+\delta_3 H_3 $ to $H_1$ where
 \[\|\delta_2H_2+\delta_3 H_3\|_\kappa\leq \delta_2\|H_2\|_\kappa+\delta_3 \|H_3\|_\kappa\leq\delta_2+\delta_3<\delta.\] 
As $\delta>0$ can be arbitrarily small,  the existence of non-degenerate critical points for $\tau\mapsto L(\tau,I_0,\vph_0,s_0)$ is a dense property in the $C^\omega_\kappa$ topology. Finally,  using the implicit function theorem we can obtain  an open neighborhood  $U^-=\mathcal{I}\times\mathcal{J}\subset\cB_*\times\T^{d+1}$ of the point $(I_0,\vph_0,s_0)$, such that for each $(I,\vph,s)\in U^-$ the map 
$$\tau\in\R^n\longmapsto L(\tau,I,\vph,s)$$
has a non-degenerate critical point $\tau^*=\tau^*(I,\vph,s).$ This finishes our proof.
\end{proof}

We have provided a constructive   proof for Theorem \ref{H3a_genericity}.  
Next, we proceed to show the genericity of assumption {\bf(H3b)}.

\begin{The}\label{H3b_genericity}
Given $\kappa>0$.
	The set of perturbations $H_1$ satisfying the non-degeneracy assumption {\bf(H3b)} is $C^\omega_\kappa$ dense and $C^3$ open (which therefore implies $C^\omega_\kappa$-openness).
\end{The}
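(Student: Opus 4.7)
The plan is as follows. Openness is immediate: if $\partial_\theta\cL^*(I_0,\theta_0)\neq 0$ for some $H_1$ satisfying \textbf{(H3a)}, then under any sufficiently small $C^3$ perturbation the non-degenerate critical point $\tau^*$ moves only slightly (by the implicit function theorem applied to the equation $\partial_\tau L=0$), and the Melnikov potential $L$ and its first partials depend continuously on $H_1$ in the $C^2$ norm along the homoclinic. Hence the strict inequality is preserved, which gives $C^3$-openness and therefore also $C^\omega_\kappa$-openness.

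For density, first apply Theorem \ref{H3a_genericity} to reduce to the case where $H_1$ already satisfies \textbf{(H3a)} near the reference point $(I_0,\vph_0,0)$, so that $\tau^*(I,\vph,s)$ is well-defined and $C^{r-1}$. The envelope identity in Remark \ref{rem_par_der} gives $\partial_\theta\cL^*(I_0,\theta_0)=\partial_\vph L(\tau^*,I_0,\theta_0,0)$, so the goal is to make this partial derivative non-zero by an arbitrarily small analytic perturbation. Writing $H_1\to H_1+\delta H_4$, letting $\tau^*_\delta$ denote the perturbed critical point, and setting $M:=[\partial^2_\tau L(\tau^*)]^{-1}$, the implicit function theorem gives $\partial_\delta\tau^*_\delta|_{\delta=0}=-M\,\partial_\tau L_{H_4}(\tau^*)$; the chain rule combined with the envelope identity then yields
\[
\frac{d}{d\delta}\bigg|_{\delta=0}\frac{\partial\cL^{*,\delta}}{\partial\theta}(I_0,\theta_0)
=\frac{\partial L_{H_4}}{\partial\vph}(\tau^*)-\frac{\partial^2 L}{\partial\vph\,\partial\tau}(\tau^*)\,M\,\frac{\partial L_{H_4}}{\partial\tau}(\tau^*)
=:B(H_4)\in\R^d.
\]
If some $H_4\in C^\omega_\kappa$ can be found with $B(H_4)\neq 0$, then $\partial_\theta\cL^{*,\delta}(I_0,\theta_0)=\delta\,B(H_4)+O(\delta^2)\neq 0$ for all sufficiently small $\delta\neq 0$, and density is established.

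To construct such an $H_4$, the plan is to use the single-mode ansatz $H_4(q,\vph,t)=f(q_1)\cos(\vph_j+m t+\beta)$, with $f\in C^\omega_\kappa(\T)$, integer $m$, phase $\beta$, and coordinate index $j\in\{1,\dots,d\}$ to be chosen. A direct integration in the same spirit as the computation of the constants $A_{i,1},A_{i,2}$ in the proof of Theorem \ref{H3a_genericity} shows that, at the base point,
\[
\frac{\partial L_{H_4}}{\partial\tau}(\tau^*)=-\Omega_j\,C\,e_1,\qquad \frac{\partial L_{H_4}}{\partial\vph}(\tau^*)=C\,e^{(j)},
\]
where $\Omega_j:=\omega_j(I_0)+m$, $e_1\in\R^n$ and $e^{(j)}\in\R^d$ are coordinate vectors, and $C=C(f,\beta,m)$ is a trigonometric combination of the Fourier integrals $\int[f(q_1^0(u))-f(0)]\,e^{i\Omega_j u}\,du$. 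Substituting into $B(H_4)$, its $j$-th component reduces to
\[
B_j(H_4)=C\Big[1+\Omega_j\sum_{i=1}^n \partial^2_{\vph_j\tau_i}L(\tau^*)\,M_{i1}\Big].
\]
The bracket is affine in $\Omega_j$ with constant term $1$, so it vanishes for at most one value of $\Omega_j$. Varying $m\in\Z$ shifts $\Omega_j$ through the arithmetic progression $\omega_j(I_0)+\Z$, so I can pick $m$ with both $\Omega_j\neq 0$ and the bracket non-zero; Lemma \ref{lem_nonzero} then supplies $f$ for which the Fourier integral at frequency $\Omega_j$ is non-zero, and a generic choice of $\beta$ gives $C\neq 0$, so $B_j(H_4)\neq 0$.

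The step I anticipate as the main obstacle is precisely the internal cancellation in $B(H_4)$ between the naive term $\partial_\vph L_{H_4}(\tau^*)$ and the correction $\partial_\vph\partial_\tau L\cdot M\cdot\partial_\tau L_{H_4}$ induced by the displacement of the critical point: for a single harmonic the two contributions are proportional, and the proportionality constant depends on the unknown structure of $L$ at $\tau^*$. The crucial freedom that defeats this degeneracy is the integer parameter $m$, which introduces infinitely many admissible values of $\Omega_j$ and hence unambiguously sidesteps the unique value at which the bracket could vanish.
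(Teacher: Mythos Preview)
Your argument is correct and takes a route that is close to the paper's in spirit but genuinely more careful at the decisive step. Both proofs reduce density to showing that the first $\delta$-variation of $\partial_\theta\cL^*$ under a family $H_1+\delta H_2$ can be made nonzero with a trigonometric test perturbation of the form $F(q_1)\cdot(\text{cosine in the angles})$. The paper differentiates the envelope identity $(\cL^\delta)^*=\cL^*+\delta\,\widetilde L(\tau^*,I,\theta,0)+O(\delta^2)$ in $\theta$ and retains only the slot-partial $\partial_\theta\widetilde L(\tau^*,\cdot)$, tacitly dropping the chain-rule contribution through $\tau^*=\tau^*(I,\theta,0)$; you instead compute the full linearization
\[
B(H_4)=\partial_\vph L_{H_4}(\tau^*)-\partial^2_{\vph\tau}L(\tau^*)\,M\,\partial_\tau L_{H_4}(\tau^*),
\]
which is the correct first-order coefficient. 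For a $t$-independent test function (the paper's $H_2=F(q_1)\sum_i\cos(\vph_i+c_i)$) that extra cross term is proportional to the naive one, with proportionality constant $\omega_j(\hat I)\sum_i\partial^2_{\vph_j\tau_i}L(\tau^*)M_{i1}$, so an accidental cancellation $1+\omega_j(\hat I)\sum_i\partial^2_{\vph_j\tau_i}L(\tau^*)M_{i1}=0$ could in principle annihilate $B$ and the argument would stall. Your device of inserting the integer time-harmonic $m$ in $H_4=f(q_1)\cos(\vph_j+mt+\beta)$ replaces $\omega_j(\hat I)$ by $\Omega_j=\omega_j(\hat I)+m$, turning the bracket into an affine function of $\Omega_j$ that vanishes for at most one value; since $m$ ranges over $\Z$ you can always avoid both that value and $\Omega_j=0$, after which Lemma~\ref{lem_nonzero} and a phase choice finish the job. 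Thus the two proofs share the same skeleton, but your extra parameter $m$ is exactly what buys robustness against the hidden cancellation and makes the density argument unconditional.
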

\begin{proof}
	Since  assumption {\bf(H3a)} has already been proved to be open and dense, in the following proof we restrict our discussions to the case where  assumption {\bf(H3a)} always holds. As mentioned previously, $\tau^*(I,\theta,0)\in\R^n$ denotes the non-degenerate critical point for the map $\tau\longmapsto L(\tau, I,\theta, 0)$. Just like  assumption {\bf(H3a)}, it is easy to see that the non-degeneracy assumption {\bf(H3b)} is open in the $C^3$ topology, which directly gives $C^\omega_\kappa$-openness. Thus, the only thing left is to verify the density of {\bf(H3b)}.
	
In what follows, we fix a point $(\hat{I},\hat{\theta})\in \text{Dom}(\cL^*)$.
If $\partial\cL^*/\partial\theta(\hat{I},\hat{\theta})\neq 0$, then we have finished.
 
 Suppose now
\begin{equation}\label{contr_L_theta}
	\frac{\partial \cL^*}{\partial\theta}(\hat{I},\hat{\theta})=0,
\end{equation} 
we will add a small perturbation to create non-degeneracy. More precisely, 
we add a perturbation $\delta H_2$ to $H_1$ where  the number $\delta>0$ is small enough and the analytic function $H_2$ will be determined later. Then the new Melnikov potential is
\[L^\delta=L+\delta \widetilde{L}\] where
\begin{equation}\label{L_H2}
\begin{split}
\widetilde{L}(\tau, I,\vph,s)=-\int^\infty_{-\infty}\Big[& H_2(p^0(\tau+t\bar{1}),q^0(\tau+t\bar{1}),I,\vph+\omega(I)t, s+t)-H_2(0,0,I,\vph+\omega(I)t, s+t)\Big]\,dt.	
\end{split}	
\end{equation}
As {\bf(H3a)} holds for $L$, it follows from the implicit function theorem that near $\tau^*$ there is a unique and non-degenerate critical point $\tau^{*,\delta}=\tau^{*,\delta}(I,\theta,0)$  for the 
map $\tau\longmapsto$ $ L^\delta(\tau,I,\theta,0)$. This critical point has an expansion in $\delta$ as follows,
$$\tau^{*,\delta}(I,\theta,0)=\tau^*(I,\theta,0)+\delta\,\widetilde{\tau}^*(I,\theta,0)+O(\delta^2).$$
Since $\tau^{*,\delta}(I,\theta,0)$ solves the equation $\partial L^\delta/\partial\tau(\tau^{*,\delta},I,\theta,0)=0$,  we obtain
\[
\frac{\partial L}{\partial\tau}(\tau^{*},I,\theta,0)+\delta\Big[\frac{\partial^2L}{\partial \tau^2} (\tau^*,I,\theta,0)\widetilde{\tau}^*+\frac{\partial \widetilde{L}}{\partial\tau}(\tau^*,I,\theta,0)\Big]+O(\delta^2)=0
\]
for all small $\delta$. Then,
owing to $\partial L/\partial\tau(\tau^{*},I,\theta,0)=0$ we obtain
\begin{equation*}
	\widetilde{\tau}^*(I,\theta,0)=-\bigg (\frac{\partial^2L}{\partial \tau^2}(\tau^*,I,\theta,0)\bigg)^{-1}\frac{\partial \widetilde{L}}{\partial\tau}(\tau^*,I,\theta,0),
\end{equation*}
where the matrix $\partial^2L/\partial \tau^2 (\tau^*,I,\theta,0)$ is invertible as a result of \textbf{(H3a)}. Hence the new reduced Poincar\'e function $(\cL^\delta)^*$ has an expansion in $\delta$ as follows:
\begin{equation}\label{L_delta_expression}
	\begin{split}
	(\cL^\delta)^*(I,\theta)=&L^{\delta}(\tau^{*,\delta},I,\theta,0)	\\=&L(\tau^*,I,\theta,0)+\delta\Big[\frac{\partial L}{\partial \tau}(\tau^*,I,\theta,0)\,\widetilde{\tau}^*
	+\widetilde{L}(\tau^*,I,\theta,0)
	\Big]+O(\delta^2)	\\
 = &\cL^*(I,\theta)
	+\delta\,\widetilde{L}(\tau^*,I,\theta,0)+O(\delta^2).
	\end{split}
\end{equation}
Here, the last equality follows from the definition of $\tau^*$ at which $\partial L/\partial\tau(\tau^*,I,\theta,0)= 0$.

We want to show that $\frac{\partial (\cL^\delta)^*}{\partial\theta}$ is non-zero at the point $(\hat{I}, \hat{\theta})$. In view of \eqref{contr_L_theta} and \eqref{L_delta_expression}, it suffices to prove $$\frac{\partial \widetilde{L}}{\partial\theta}(\tau^*,\hat{I},\hat{\theta},0)\neq 0.$$ 
To achieve this,  we choose $H_2$  as follows
\begin{equation}\label{H2form}
H_2(q,\vph)=F(q_1)\sum_{i}^d \cos(\vph_i+c_i),	
\end{equation}
where the one-dimensional function $F\in C^\omega_\kappa(\T)$ and the sequence of numbers $\{c_i\}_{i=1}^d$ will be determined later. Then we deduce from \eqref{L_H2} that
\begin{equation*}
\begin{split}
	\widetilde{L}(\tau,I,\vph,s)=\sum_{i=1}^d A_i(I,\tau)\cdot\cos(\vph_i+c_i)-B_i(I,\tau)\cdot\sin(\vph_i+c_i),
\end{split}	
\end{equation*}
where the coefficients 
\[A_i(I,\tau)=-\int^\infty_{-\infty}\left(F(q_1^0(\tau_1+t))-F(0)\right)\cdot\cos \omega_i(I)t \,dt=-\int^\infty_{-\infty}\left(F(q_1^0(t))-F(0)\right)\cdot\cos \big(\omega_i(I)\cdot(t-\tau_1)\big) \,dt,\]
\[B_i(I,\tau)=-\int^\infty_{-\infty}\left(F(q_1^0(\tau_1+t))-F(0)\right)\cdot\sin\omega_i(I)t \,dt=-\int^\infty_{-\infty}\left(F(q_1^0(t))-F(0)\right)\cdot\sin\big(\omega_i(I)\cdot(t-\tau_1)\big)\,dt.\]
Note that $A_i(I,\tau)$ and $B_i(I,\tau)$ do not depend on $\tau_2,\cdots,\tau_n$.
In particular, for  $\tau^*=\tau^*(I,\theta,0)$ we have
\begin{equation*}
\begin{split}
	\widetilde{L}(\tau^*,I,\theta,0)&=\sum_{i=1}^d A_i(I,\tau^*)\cdot\cos(\theta_i+c_i)-B_i(I,\tau^*)\cdot\sin(\theta_i+c_i)\\
	&=\sum_{i=1}^d C_i(I,\tau^*)\cdot\cos\Big(\theta_i+c_i+\alpha_i(I,\tau^*)\Big),
\end{split}	
\end{equation*}
where $C_i(I,\tau^*)=\sqrt{A_i^2(I,\tau^*)+B^2_i(I,\tau^*)}$. The angle $\alpha_i(I,\tau^*)=\arccos \left(A_i(I,\tau^*)/C_i(I,\tau^*)\right)$ if the value  $C_i(I,\tau^*)\neq 0$. We choose $\alpha_i(I,\tau^*)=0$ whenever $C_i(I,\tau^*)=0$.

For the fixed point $(\hat{I},\hat{\theta})$, we can invoke Lemma \ref{lem_nonzero} to  choose a one-dimensional function $F(q_1)$ such that 
\[C_1(\hat{I},\tau^*)=\sqrt{A_1^2(\hat{I},\tau^*)+B^2_1(\hat{I},\tau^*)}\neq 0, \qquad\text{where~}\tau^*=\tau^*(\hat{I},\hat{\theta},0).\]
As we will see below, the values of $C_i(\hat{I},\tau^*)$ for $i=2,3,\cdots,d$ play no role in the following proof.

Observe that 
 \begin{equation*}
\frac{\partial \widetilde{L}}{\partial\theta}(\tau^*,\hat{I},\hat{\theta},0)=(y_1,\cdots,y_d)^\intercal,
\end{equation*}
 where
 \begin{equation*}
y_i=-C_i(\hat{I},\tau^*)\cdot\sin\Big(\hat{\theta}_i+c_i+\alpha_i(\hat{I},\tau^*)\Big),\qquad i=1,\cdots,d.
\end{equation*}
Now we can choose
\[c_1=-\hat{\theta}_1-\alpha_1(\hat{I},\tau^*)+\frac{\pi}{2},\quad\text{and}\quad c_i=-\hat{\theta}_i-\alpha_i(\hat{I},\tau^*)\quad\text{for~} i=2,\cdots,d\]
which therefore gives
\begin{equation}\label{non_degen_direction}
	\frac{\partial \widetilde{L}}{\partial\theta}(\tau^*,\hat{I},\hat{\theta},0)=(-C_1(\hat{I},\tau^*), ~0, \cdots, 0)^\intercal\neq 0.
\end{equation}
Together with \eqref{contr_L_theta} and \eqref{L_delta_expression}, this implies 
\[\frac{\partial (\cL^\delta)^*}{\partial\theta}(\hat{I},\hat{\theta})=\delta\, \frac{\partial \widetilde{L}}{\partial\theta}(\tau^*,\hat{I},\hat{\theta},0)+O(\delta^2)\neq 0\]
 as long as $\delta>0$ is sufficiently small. This completes the proof.
\end{proof}

\begin{Rem}\label{anypoint_open_set2}
In fact, from Theorem \ref{H3a_genericity} and the proof of  Theorem \ref{H3b_genericity} one can find that for any given point $(I_0,\theta_0)$, the set of perturbations $H_1$ satisfying  $(I_0,\theta_0)\in \text{Dom}(\cL^*)$ and
\begin{equation*}
\frac{\partial \cL^*}{\partial\theta}(I_0,\theta_0)\neq 0.
\end{equation*}
 is $C^\omega_\kappa$ open and dense.
\end{Rem}

Therefore, Theorem \ref{H3a_genericity} together with Theorem \ref{H3b_genericity} and Remark \ref{anypoint_open_set2}  leads to the following result.

\begin{The}\label{comb_H3ab}
Let $\kappa>0$ and $I_0\in\cB_*$. Then the set of all perturbations  $H_1$, satisfying  assumption \textrm{\bf{(H3a)}} with $\{I=I_0\}\cap U^-\neq \emptyset$ and  assumption \textrm{\bf{(H3b)}} with 
$\frac{\partial \cL^*}{\partial\theta}(I_0,\theta_0)\neq 0$ for some $\theta_0\in\T^d$, is  dense and open in the $C^\omega_\kappa$ topology. Actually, it is also $C^3$ open.	
\end{The}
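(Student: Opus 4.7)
The plan is to combine Theorem \ref{H3a_genericity} with Remark \ref{anypoint_open_set2} by working at a single, carefully chosen reference point, so that both non-degeneracy conditions can be imposed simultaneously on the same Hamiltonian, and then to intersect the two resulting dense open sets.

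For density, I would proceed in two perturbation stages. Fix once and for all an auxiliary point $(\vph_0,s_0)\in\T^d\times\T$ and set $\theta_0:=\vph_0-\omega(I_0)s_0$. Given $H_1\in C^\omega_\kappa$ and $\delta>0$, first apply Theorem \ref{H3a_genericity} at the point $(I_0,\vph_0,s_0)$ to produce a perturbation $\delta_1 H'$ with $\|\delta_1 H'\|_\kappa<\delta/2$ such that $H_1+\delta_1 H'$ satisfies \textbf{(H3a)} with non-degenerate critical point $\tau^*=\tau^*(I,\vph,s)$ defined on a neighborhood $U^-=\mathcal{I}\times\mathcal{J}$ of $(I_0,\vph_0,s_0)$. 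Since $I_0\in\mathcal{I}$ this already forces $\{I=I_0\}\cap U^-\neq\emptyset$, and the shift identity \eqref{periodicity_L} together with the definition of $\mathrm{Dom}(\cL^*)$ places $(I_0,\theta_0)$ in $\mathrm{Dom}(\cL^*)$ (with witness $s=s_0$). Then apply Remark \ref{anypoint_open_set2} at the fixed point $(I_0,\theta_0)$ to find a second perturbation $\delta_2 H''$ with $\|\delta_2 H''\|_\kappa<\delta/2$ for which $\partial\cL^*/\partial\theta(I_0,\theta_0)\neq 0$. The total perturbation has norm less than $\delta$.

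Openness will follow because both conditions are $C^3$-open: a sufficiently small $C^2$-perturbation of $H_1$ preserves the non-degenerate critical point at the reference $(I_0,\vph_0,s_0)$, so the corresponding neighborhood $U^-$ still contains this point and $\{I=I_0\}\cap U^-\neq\emptyset$ is retained; analogously, $\cL^*$ depends $C^1$-continuously on $H_1$ in the $C^3$ topology (via the Melnikov integral \eqref{Def_PM_function} and the implicit function theorem applied to $\tau^*$), so non-vanishing of $\partial\cL^*/\partial\theta$ at the fixed point $(I_0,\theta_0)$ persists under small $C^3$-perturbations. The intersection of two $C^\omega_\kappa$-dense, $C^3$-open sets is again $C^\omega_\kappa$-dense and $C^3$-open.

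The only substantive point requiring verification is the compatibility of the two perturbative stages: the second perturbation $\delta_2 H''$ must not destroy \textbf{(H3a)} at $(I_0,\vph_0,s_0)$. This is handled by the $C^3$-openness of \textbf{(H3a)} noted above, by shrinking $\delta_2$ further if necessary so that the Hessian $\partial^2 L/\partial\tau^2$ stays invertible at the perturbed critical point. Since Theorems \ref{H3a_genericity} and \ref{H3b_genericity} are themselves proved via explicit, arbitrarily small analytic perturbations, this compatibility is essentially automatic and constitutes the mildest part of the argument rather than a genuine obstacle.
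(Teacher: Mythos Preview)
Your proposal is correct and follows essentially the same approach as the paper, which simply states that Theorem~\ref{comb_H3ab} is a consequence of Theorem~\ref{H3a_genericity}, Theorem~\ref{H3b_genericity}, and Remark~\ref{anypoint_open_set2} without spelling out the details. Your two-stage perturbation argument (first impose \textbf{(H3a)} at a reference point, then impose \textbf{(H3b)} by a further small perturbation that, by $C^3$-openness, does not destroy \textbf{(H3a)}) is exactly the intended unpacking of that sentence; the only minor comment is that your final line about ``the intersection of two $C^\omega_\kappa$-dense, $C^3$-open sets'' is slightly loose, since the \textbf{(H3b)} condition is only meaningful on the set where \textbf{(H3a)} already holds --- but your preceding two-stage argument already handles this correctly, so the remark is redundant rather than wrong.
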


Now, we are ready to prove our main result in Section \ref{sec_mainresult}.

\begin{proof}[Proof of Theorem \ref{Main_Thm1}]
Let $\kappa>0$ and the point $I_0$ be fixed, we can invoke the above Theorem \ref{comb_H3ab} to find a $C^\omega_\kappa$  open and dense set $\cU$, such that for each $H_1\in\cU$ the Hamiltonian $\HH_\vep=H_0+\vep H_1$ satisfies the non-degeneracy assumptions  {\bf (H3a)--\bf(H3b)}. In particular, the reduced Poincar\'e function $\cL^*$ satisfies  $\frac{\partial \cL^*}{\partial\theta}(I_0,\theta_0)\neq 0$ for some $\theta_0\in\T^d$. 

On the other hand, as we have already assumed that $H_0$ satisfies conditions {\bf (H1)--\bf(H2)},  it follows from Theorem \ref{existence_orbits} that there exist $\vep_0=\vep_0(H_1)>0$ and $\rho=\rho(H_1)>0$, such that for every $\vep\in (-\vep_0,\vep_0)\setminus\{0\}$ and every $\delta>0$, the Hamiltonian flow  admits a trajectory  whose action variables $I$  satisfy
\[\sup_{t>0}\|I(t)-I(0)\|\geq \rho,\quad \|I(0)-I_0\|<\delta+K\vep.\]

As for the neighborhood $V_{I_0}$ of $I_0$, 
taking $\delta$ and $\vep$ suitably small we can ensure that the initial condition $I(0)\in V_{I_0}$.
 This finishes the proof.
\end{proof}

Note that the Fr\'echet space $C^\omega=\cup_{\kappa>0}C^\omega_\kappa=\cup_{m\in \Z^+}C^\omega_{_{1/m}}$. Then by the definition of  Fr\'echet topology,
Corollary \ref{Main_Cor2}  follows directly from Theorem \ref{Main_Thm1}. 

\begin{proof}[Proof of Theorem \ref{Main_Thm2}]
	As we can see from the proof of Theorem \ref{H3a_genericity}, the perturbation functions  constructed by us depend only on $(q, t)\in \T^n\times\T$.
Meanwhile, in the proof of Theorem \ref{H3b_genericity}, the perturbation functions constructed by us depend only on $(q, \vph)\in \T^n\times\T^d$.
	This implies that the genericity of  assumptions {\bf(H3a)} and {\bf(H3b)} are established by constructing potential perturbations that are independent of $p$ and $I$. 
	  Therefore, assumptions {\bf(H3a)--(H3b) }are also open-dense in the $C_\kappa^\omega(\T^{n+d+1})$ space.
	 The remaining proof is just the same as that of Theorem \ref{Main_Thm1}.
\end{proof}

\begin{Rem} \label{rem:codimension}
We have verified that, in case that there is some degeneracy, it can 
be removed by adding some $\cos$ functions. 

Verifying that perturbations of this kind remove the degeneracy amounted 
to a determinant being non-zero. Clearly, if we modify the $\cos$ slightly, 
this condition will remain true.   This justifies the observation 
that the degeneracy can only fail of 
$H_1$ in a submanifold of infinite codimension.

We will not pursue this line of reasoning, but it seems that indeed, 
the set of directions transversal  to the manifold containing all the $H_1$ where 
diffusion fails is not only infinite dimensional, but also dense. 
This is indeed a very strong form of genericity.
\end{Rem}

\appendix
\section{Normally hyperbolic invariant manifolds}\label{appendix_Nor_hyp_theory}
Normally hyperbolic invariant manifold (NHIM) can be viewed as a natural generalization of  hyperbolic set.
The NHIM has not only stable and unstable directions, but also central directions (tangent to the manifold itself). The theory of normal hyperbolicity and the theory of partial hyperbolicity are closely related in their results and methods.  We refer the reader to the standard references \cite{Fen1971,Fen1977,HPS1977,Pesin2004}. 
In this appendix, we only review some classical results, including the existence of NHIMs, the existence of the 
stable and unstable manifolds and their invariant foliations, and the smoothness and the persistence of these manifolds. The definition of normal hyperbolicity that we adopt below is based on \cite{HPS1977}.

\subsection{The continuous case} 
Let $M$ be a smooth Riemannian manifold and $\Phi_t$ be an autonomous $C^{r_0}$ $( 1\leq r_0\leq\infty)$ flow defined on $M$. 
A $\Phi_t$-invariant submanifold  (probably with boundary) $N\subset M$ is called a \emph{normally hyperbolic invariant manifold} if for every $x\in N$ there is an invariant splitting 
\[T_xM=T_xN\oplus E_x^s\oplus E_x^u, \qquad\text{~with}\quad  D\Phi_t E_x^s=E^s_{\Phi_t(x)},\quad D\Phi_t E_x^u=E^u_{\Phi_t(x)},\quad \forall~t\in\R,\]
such that
\begin{align}\label{hyp splitting}
    v\in E^s_x  & \Longleftrightarrow C^{-1}e^{t\lambda_s }\|v\|\leq\|D\Phi_t(x)v\|\leq Ce^{t\mu_s }\|v\|,  \quad t\geq 0, \nonumber\\
    v\in E^u_x & \Longleftrightarrow  C^{-1}e^{t\mu_u }\|v\|\leq  \|D\Phi_t(x)v\|\leq Ce^{t\lambda_u }\|v\|,  \quad t\leq 0,\\ 
    v\in T_xN & \Longleftrightarrow C^{-1}e^{|t|\lambda_c }\|v\| \leq \|D\Phi_t(x)v\|\leq Ce^{|t|\mu_c }\|v\| ,  \quad t\in\R,\nonumber
\end{align}
where  the constant $C>1$, and the rates
\begin{equation*}
	\lambda_s\leq \mu_s<\lambda_c<0<\mu_c<\lambda_u\leq \mu_u.
\end{equation*}
The superscripts $c, u$ and $s$ stand for ``center", ``unstable" and ``stable", respectively.

\begin{Rem}
	For the Hamiltonian model considered in this paper, the rates $\lambda_c, \mu_c$ are  close to zero.
\end{Rem}

For a NHIM, the expansion and contraction in the central directions are weaker than those in the normal directions. Then the dynamics on $N$ is approximately neutral and the dynamics in the normal directions is hyperbolic. That is why we call it normally hyperbolic.

We point out that the manifold $M$ is not necessarily compact. As remarked in \cite{HPS1977, Bates_Lu_Zeng2000}, it suffices to assume that $\Phi_t$ is $C^{r_0}$ in a  neighborhood of $N$ with all the derivatives of order up to $r_0$ uniformly continuous and uniformly bounded.

\subsubsection{Stable and unstable manifolds}
Let us consider a small tubular neighborhood $U$ of the NHIM $N$. In both \cite{Fen1971} and \cite{HPS1977}
the existence of local stable and unstable manifolds in $U$, denoted by $W^{s,loc}_N$ and $W^{u,loc}_N$, are obtained by using the method of Hadamards's graph transform. In addition, $W^{s,loc}_N$ (resp. $W^{u,loc}_N$) consists of points for which all forward (resp. backward) iterates lie in $U$ and approach $N$. 
Then the (global) stable and unstable manifolds can be defined by , respectively, 
\[W^s_N=\bigcup_{t\leq 0}\Phi_t(W^{s,loc}_N), \qquad W^u_N=\bigcup_{t\geq 0}\Phi_t(W^{u,loc}_N).\]
The global stable and unstable manifolds can have a topological characterization:
\begin{equation}\label{definition of local stable}
  \begin{split}
     W^s_N =\{y\in M ~|~ \textup{dist}(\Phi_t(y),N)\to 0,\text{~as~} t\to+\infty\},\quad
         W^u_N=\{y\in M ~|~ \textup{dist}(\Phi_t(y),N)\to 0,\text{~as~} t\to-\infty\},
   \end{split}
\end{equation}
where ``dist$(\cdot,\cdot)$" is the distance induced by the Riemannian metric on $M$. In fact, the distance convergence in \eqref{definition of local stable} is exponential.

For each $x\in N$, we can also construct the stable and unstable manifolds with basepoint $x$: 
\begin{equation}\label{def_leave}
  \begin{split}
     W^s_x &=\{y\in M ~|~  \textup{dist}(\Phi_t(x),\Phi_t(y))\leq  \widetilde{C}_y\,e^{(\mu_s+\widetilde{\vep})t}, \text{~for~} t\geq 0\}, \\
     W^u_x &=\{y\in M ~|~  \textup{dist}(\Phi_t(x),\Phi_t(y))\leq  \widetilde{C}_y\,e^{(\lambda_u-\widetilde{\vep})t}, \text{~for~} t\leq 0 \}.
   \end{split}
\end{equation}
where  $\widetilde{C}_y$ is a constant, and $\widetilde{\vep}>0$ is any small number satisfying 
\[\mu_s+\widetilde{\vep}<\lambda_c,\qquad \lambda_u-\widetilde{\vep}>\mu_c.\] 
This tells us that the trajectories starting on $W^{s}_x$ or $W^u_x$ satisfy certain asymptotic growth rate conditions, and the growth rate shall be greater than that of the trajectories on $N$.

 It is important to realize that the manifolds $W^{s,u}_N$ are $\Phi_t$-invariant while  $W^{s,u}_x$ are not. Anyway, $W^{s}_N$ and $W^{u}_N$ can be foliated by the stable and unstable manifolds of points respectively, i.e., 
\begin{align}\label{foliated_by_leaves}
	W^{s}_N=\bigcup\limits_{x\in N} W^{s}_x, \qquad  W^{u}_N=\bigcup\limits_{x\in N} W^{u}_x.
\end{align}
For $x\neq x'$, one has $W^{s}_x\bigcap W^{s}_{x'}=\emptyset$ and $W^{u}_x\bigcap W^{u}_{x'}=\emptyset.$

 \subsubsection{Smoothness}\label{appendix_subsub_smooth}
The manifolds $W^{s,u}_x$ are as smooth as the flow $\Phi_t$, so $W^{s,u}_x$ are  $C^{r_0}$ $( 1\leq r_0\leq\infty)$ with  $T_xW^{s}_x=E_x^s$ and $T_xW^{u}_x=E_x^u$.  Nevertheless, the stable manifold $W^s_N$ and the unstable manifold $W^u_N$ have limited regularity even if the flow is $C^\infty$.
Their regularity is dictated by the ratio of the normal hyperbolicity and the central hyperbolicity. More precisely, we introduce the following integers (see \cite[Chapter 5]{Pesin2004} or \cite{HPS1977}):
\begin{align}\label{regularity_NHIM}
	\ell_u:=\max\left\{k=1,\cdots,r_0~:~k<\frac{\mu_s}{\lambda_c}\right\}, \quad 
	\ell_s:=\max\left\{k=1,\cdots,r_0~:~k<\frac{\lambda_u}{\mu_c}\right\}, \quad \ell:=\min\{\ell_u,\ell_s\}.
\end{align}
Clearly, $\ell_u$, $\ell_s$ and $\ell$ are all finite values even when $r_0=\infty$.

\begin{Pro}\label{pro_smooth_su}\cite{HPS1977}
	 The following properties hold:
 \begin{enumerate}
 	\item [\rm(I)] $W^{s}_N$ and $W^{u}_N$ are at least  $C^\ell$ differentiable. In fact,  $W^{s}_N$ is $C^{\ell_s}$ and $W^{u}_N$ is $C^{\ell_u}$. Then $N=W^s_N\cap W^u_N$ is $C^{\ell}$ differentiable.
 	\item [\rm(II)]  The stable foliation $\{W^s_x : x\in N\}$ is $C^\ell$ in the sense that $\bigcup_{x\in N}T^k_x W^s_x$ is a continuous bundle for each $1\leq k\leq \ell$, where $T^k$ denotes the $k$-th order tangent. Analogous result holds for the unstable foliation.
 \end{enumerate}
\end{Pro}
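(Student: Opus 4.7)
The strategy is to reduce everything to an invariant section theorem for a fiber-contracting bundle map, applied in a tubular neighborhood of $N$. First, I would choose tubular coordinates in which a neighborhood $U$ of $N$ is identified with (an open subset of) the direct sum $E^s\oplus E^u\oplus TN$; after a suitable cutoff outside $U$ (so that the extended flow is globally defined without destroying the properties on $N$), the time-$1$ map $\Phi_1$ becomes an admissible perturbation of a linear cocycle over $\Phi_1|_N$ whose fiber rates are governed by $(\lambda_s,\mu_s)$ and $(\lambda_u,\mu_u)$ in the normal directions and by $(\lambda_c,\mu_c)$ along the base. The local unstable manifold $W^{u,loc}_N$ is then realized as the graph of a section $\sigma^u:N\oplus E^u\to E^s$; this section is produced as the unique fixed point of the graph transform $\mathcal{T}$ induced by $\Phi_1^{-1}$, which is a contraction on the Banach space of bounded Lipschitz sections because the normal expansion in $E^u$ dominates the central expansion. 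An identical argument on the other side produces $W^{s,loc}_N$, and one extends globally by $W^s_N=\bigcup_{t\le 0}\Phi_t(W^{s,loc}_N)$, $W^u_N=\bigcup_{t\ge 0}\Phi_t(W^{u,loc}_N)$.

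For part (I), the regularity of $W^{u}_N$ is obtained by upgrading the fixed-point argument to the space of $C^k$ sections. Formally differentiating the fixed-point equation $k$ times yields an equation for the $k$-jet $J^k\sigma^u$ of the form $J^k\sigma^u = \mathcal{A}_k(J^k\sigma^u)$, where $\mathcal{A}_k$ is a bundle map whose fiber Lipschitz constant is controlled by the ratio $(\mu_c)^k/\lambda_u$ (each order of differentiation along $N$ picks up one factor of $\mu_c$, whereas the normal contraction in the graph transform contributes $1/\lambda_u$). Hence $\mathcal{A}_k$ is still a fiber contraction, and the $C^r$ section theorem (Hirsch–Pugh–Shub, or equivalently the fiber contraction principle) provides a unique continuous invariant section, which must coincide with $J^k\sigma^u$, precisely when $k\mu_c<\lambda_u$, i.e. $k\le\ell_s$. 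Swapping roles of stable and unstable gives $C^{\ell_u}$ regularity for $W^s_N$; the intersection $N=W^s_N\cap W^u_N$ is therefore at least $C^\ell$ with $\ell=\min(\ell_s,\ell_u)$.

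For part (II), I would apply essentially the same fiber-contraction machinery but now to the bundle whose fiber over $x\in N$ is the space of $k$-jets of submanifolds of $W^s_N$ passing through $x$ and tangent to $E^s_x$. The flow induces a map on this bundle; the key estimate is that differentiation of the leaf $W^s_x$ along the base direction costs a factor $\mu_c$ per derivative, while the contraction inside the leaves is at least $\lambda_u$ in backward time — the very same spectral condition $k\mu_c<\lambda_u$ therefore makes the induced map a fiber contraction, and its unique invariant section gives a continuous assignment $x\mapsto T^k_xW^s_x$ for $1\le k\le\ell$. The analogous statement for $\{W^u_x\}$ is symmetric.

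The main obstacle, as always in this circle of ideas, is setting up the functional-analytic framework in a way that the formal ``differentiate the graph equation'' argument becomes a rigorous fixed-point computation in an appropriate Banach space of sections. One must pick function spaces so that (a) the graph transform preserves them, (b) its linearization is genuinely contracting, (c) the uniqueness of invariant sections forces the formal derivatives of $\sigma^u$ to coincide with the true ones, and (d) the estimates are robust under the cutoff modification of $\Phi_t$ outside $U$. Once this technical scaffolding is in place, the two parts of the proposition follow from the same gap condition, and the bookkeeping of exponents reduces to the definitions of $\ell_s$, $\ell_u$, $\ell$ given in \eqref{regularity_NHIM}.
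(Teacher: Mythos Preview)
The paper does not supply a proof of this proposition; it is quoted from \cite{HPS1977}. Your overall strategy---graph transform plus the $C^r$ section theorem (fiber-contraction principle), first for $W^{s,u}_N$ as graphs of sections and then for the bundle of leaf jets---is exactly the Hirsch--Pugh--Shub approach, so the plan is sound.

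There is, however, a concrete error in the exponent bookkeeping which swaps the regularities of $W^u_N$ and $W^s_N$. With $W^u_N$ written as the graph of $\sigma^u:N\oplus E^u\to E^s$, the \emph{fiber} is $E^s$, not $E^u$; the contracting graph transform is therefore the one induced by the forward map $\Phi_1$ (under which $E^s$ contracts by at most $e^{\mu_s}$), not by $\Phi_1^{-1}$ as you write. The norm of the base inverse $(\Phi_1|_{N\oplus E^u})^{-1}$ is dominated by the $N$-factor, namely $e^{-\lambda_c}$, so the $C^k$ fiber-contraction condition reads $e^{\mu_s}e^{-k\lambda_c}<1$, i.e.\ $k<\mu_s/\lambda_c$, which gives $W^u_N\in C^{\ell_u}$. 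Your condition $k\mu_c<\lambda_u$ is instead the one for $W^s_N$ (fiber $E^u$, graph transform via $\Phi_1^{-1}$) and yields $\ell_s$; you have the two assignments reversed. The same slip recurs in part~(II), where for the stable foliation you invoke ``contraction inside the leaves \ldots\ $\lambda_u$ in backward time,'' but stable leaves \emph{expand} in backward time and the relevant fiber rate is $\mu_s$. Because $\ell=\min(\ell_s,\ell_u)$, your bottom-line $C^\ell$ conclusions for $N$ and for the foliations happen to survive, but the sharper assertions $W^s_N\in C^{\ell_s}$ and $W^u_N\in C^{\ell_u}$ come out backwards in your write-up.
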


 The index $\ell$ means that $D\Phi_t$ expands $E^u$ (resp. contracts $E^s$) at rates at least $\ell$ times of its expansion (resp. contraction) rate in $TN$.  Thus,  such a manifold $N$ is also called \emph{$\ell$-normally hyperbolic}.

Property (II) implies that the invariant bundle $x\to E^{s,u}_x$ is $C^{\ell-1}$.
 In general, $W^{s,u}_N$ and $W^{s,u}_x$ are immersed manifolds and may fail to be embedded manifolds.  
 %$N$ and $W^{s,u}_N$  may fail to be $C^\infty$ even if the flow is analytic.

\subsection{The discrete case} 
For the Hamiltonian model considered in this paper, sometimes it is convenient to study the time-$T$ maps of the Hamiltonian flow. Hence, the study of NHIMs for maps is also needed.  
The definition of NHIM for diffeomorphisms is complete  analogous to the definition for flows. In fact, we can replace the continuous variable $t$  by a discrete variable without substantially changing any construction. Here, we only recall the definition. 

Let $f: M\to M$  be a $C^{r_0} ( 1\leq r_0\leq\infty)$ diffeomorphism, and $N\subset M$ be an invariant submanifold (probably with boundary). Assume that all the derivatives of order up to $r$ of $f$ are uniformly continuous and uniformly bounded in a neighborhood of $\Lambda$. 
 Then $N$ is called a \emph{normally hyperbolic invariant manifold} for $f$ if for every $x\in N$ there is an invariant splitting
$T_xM=T_xN\oplus E_x^s\oplus E_x^u,$ with
$Df E_x^s=E^s_{f(x)}$ and $Df E_x^u=E^u_{f(x)}$,
such that
\begin{equation}
\begin{split}
    v\in E^s_x  & \Longleftrightarrow C^{-1}\alpha^k_s \|v\|\leq\|Df^k(x)\,v\|\leq C\beta^k_s \|v\|,  \quad k\geq 0,\\
    v\in E^u_x & \Longleftrightarrow   C^{-1}\beta^k_u \|v\|\leq\|Df^k(x)\,v\|\leq C\alpha^k_u \|v\|, \quad k\leq 0,\\ 
    v\in T_xN & \Longleftrightarrow C^{-1}\alpha^{|k|}_c \|v\|\leq\|Df^k(x)\,v\|\leq C\beta^{|k|}_c \|v\|,  \quad k\in \Z,
\end{split}
\end{equation}
where  the constant $C>1$, and the rates
$
	0<\alpha_s\leq \beta_s<\alpha_c<1<\beta_c<\alpha_u\leq \beta_u.
$

\subsection{Persistence and dependence on parameters}\label{subsubsec_per_NHIM}
For applications, it is important to study the persistence of the NHIM  under perturbations, and if the persistent manifold depends smoothly on the perturbation parameter.

\begin{The} \cite{Fen1971,HPS1977,Bates_Lu_Zeng2000}\label{appendix_persistence}
	Let $N\subset M$ be a submanifold without boundary and  $N$ is a $\ell$-normally hyperbolic invariant manifold ($\ell$ is defined in \eqref{regularity_NHIM}) for the $C^{r_0}$ flow $\Phi_t$ generated by the vector field $X$. Then for the vector field $Y$ which is $C^1$-close to $X$, there exists a unique normally hyperbolic and  $\Phi^Y_t$-invarnat manifold  $N_Y$, which, is $C^\ell$ diffeomorphic and close to $N$. In particular, $N_Y$ is also $\ell$-normally hyperbolic.
	The local stable manifold $W^{s,loc}_{N_Y}$ and local unstable manifold $W^{u, loc}_{N_Y}$ are $C^\ell$ close to those of $N$. 
\end{The}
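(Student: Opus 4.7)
The plan is to follow the classical Hadamard graph transform approach (or equivalently, the Lyapunov--Perron integral equation method), suitably adapted to the normally hyperbolic setting. Since $N$ is boundaryless and $\ell$-normally hyperbolic, one can work on a uniform tubular neighborhood $U$ of $N$ and exploit the invariant splitting $TM|_N = TN \oplus E^s \oplus E^u$.

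First I would set up adapted coordinates. Using the exponential map of an adapted Riemannian metric (chosen so that $E^s, E^u$ are orthogonal to $TN$), I represent points in $U$ in the form $(x,v)$ with $x\in N$ and $v = (v^s,v^u)\in E^s_x\oplus E^u_x$, with $|v|$ small. In these coordinates the unperturbed flow $\Phi_t^X$ restricted to $N$ is just motion on $N$, and the normal component has hyperbolic behaviour with rates as in \eqref{hyp splitting}. For a $C^1$-close vector field $Y$, the corresponding flow $\Phi_t^Y$ is a small perturbation whose normal component still satisfies the dichotomy with rates $O(\varepsilon)$-close to the original ones. In particular the spectral gap between the central rate $\mu_c$ and the unstable rate $\lambda_u$ (and symmetrically $\lambda_c$ vs.\ $\mu_s$) persists, and the integer $\ell$ defined in \eqref{regularity_NHIM} is unchanged.

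Next I would realise $N_Y$ as a section. Candidate invariant manifolds are graphs $\{(x,\sigma(x)) : x\in N\}$ of small $C^0$ sections $\sigma\colon N \to E^s\oplus E^u$ of the normal bundle. The graph transform $\mathcal{T}$ associated to the time-$1$ map of $\Phi^Y$ takes such a graph to its image, re-expressed as a graph over $N$ via the tangential projection; the hyperbolic estimates guarantee that $\mathcal{T}$ is well defined on a closed ball in the Banach space of bounded continuous sections and that it is a contraction, with contraction constant controlled by $\max(e^{\mu_s-\lambda_c},e^{\mu_c-\lambda_u})<1$. The unique fixed point $\sigma_Y$ of $\mathcal{T}$ defines $N_Y$, which is then $\Phi^Y_t$-invariant for all $t$ (by uniqueness for the time-$1$ map combined with commutation with the flow), and $C^0$-close to $N$.

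For smoothness I would apply the $C^k$-section theorem (fiber contraction principle of Hirsch--Pugh--Shub): bootstrapping, one shows that $\mathcal{T}$ acts as a contraction on the space of bounded $C^k$ sections provided $k<\mu_s/\lambda_c$ and $k<\lambda_u/\mu_c$, which is exactly the content of \eqref{regularity_NHIM}; hence $\sigma_Y\in C^\ell$ and the fixed point depends $C^\ell$-continuously on the vector field in the $C^1$-topology. This gives that $N_Y$ is $C^\ell$ and $C^\ell$-close to $N$; the spectral bundles $E^{s,Y},E^{u,Y}$ are obtained as fixed points of the induced graph transform on the Grassmannian bundle, and inherit the analogous rates, so that $N_Y$ is $\ell$-normally hyperbolic. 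The local stable and unstable manifolds $W^{s,loc}_{N_Y}$, $W^{u,loc}_{N_Y}$ are constructed by the same graph-transform machinery applied to the (non-autonomous) dynamics restricted to the strong stable/unstable cones over $N_Y$, with the sharper rates $\mu_s$ vs.\ $\lambda_c$ (resp.\ $\lambda_u$ vs.\ $\mu_c$) giving the $C^\ell$ closeness to $W^{s,loc}_N, W^{u,loc}_N$.

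The main obstacle, technically, is the smoothness step. The mere existence and $C^0$-persistence of $N_Y$ is a fairly direct contraction-mapping argument, but passing to $C^k$ requires a careful implementation of the fiber contraction theorem: one works on the bundle of $k$-jets of sections, checks that the induced transform is again a contraction in the fiber direction, and verifies uniform boundedness so that the invariant section is continuous (and hence the original section is $C^k$). This is where the specific arithmetic constraints $k<\mu_s/\lambda_c$, $k<\lambda_u/\mu_c$ enter and where the sharp regularity index $\ell$ is determined. Everything else -- uniqueness (from contractivity), invariance of $N_Y$ under the full flow, and the analogous treatment for $W^{s,u,loc}_{N_Y}$ -- follows by standard perturbative adjustments of the same scheme; I would refer to \cite{Fen1971,HPS1977,Bates_Lu_Zeng2000} for the details that are, by now, textbook.
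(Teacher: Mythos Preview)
Your outline is essentially the standard Hadamard graph-transform / fiber-contraction argument from the cited references \cite{Fen1971,HPS1977,Bates_Lu_Zeng2000}, and it is correct in spirit. Note, however, that the paper does not supply its own proof of this theorem: it is stated as a classical result with citations, followed only by a remark explaining why $C^1$-closeness suffices (the hyperbolicity rates change by $O(\varepsilon)$, so the index $\ell$ in \eqref{regularity_NHIM} is preserved). Your sketch is thus more detailed than what the paper itself provides, and there is nothing to compare against beyond confirming that your approach is the one the cited sources actually use.
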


\begin{Rem}
The $C^1$-closeness between the vectors is enough because the change in the rates $\lambda_{\iota}, \mu_\iota$, $\iota=s,c,u$, can be controlled by the $C^1$ distance. The persistent manifold $N_Y$ is still  $C^\ell$ for the reason that the exponents in \eqref{hyp splitting}  for $N_Y$ is very close to those of $N$, and hence the index $\ell$ in \eqref{regularity_NHIM} remains unchanged.
\end{Rem}

For the case where the submanifold $N$ has non-empty boundary,  a locally invariant and normally hyperbolic manifold persists. To prove it, one can  construct a slightly modified system for which the normally hyperbolic manifolds are globally invariant under the modified flow. Then, by Theorem \ref{appendix_persistence} there is a unique persistent NHIM. This invariant manifold for the slightly modified system would be a locally invariant manifold for the original system. It also implies the locally invariant manifold is not unique in general. Nevertheless, there are some  cases for which the uniqueness holds. For example, the locally invariant manifold has KAM tori bounding them.

\subsubsection{Smooth parameter dependence}\label{subsubsection_NHIM_Parameter}
The persistent manifold depends smoothly on the perturbation parameter. In fact, this can be achieved by considering a extended system.

More precisely, let $\Phi_{t,\vep}: M\to M$ be a family of $C^{r_0}$ flows depending smoothly on the parameter $\vep\in [0,1]$, and assume $N_0$ is a  NHIM of the flow $\Phi_{t,0}$ satisfying \eqref{regularity_NHIM}, which  means $N_0$ is $C^\ell$ differentiable. Then, we can define an extended space $\widehat{M}=M\times [0,1]$ and an extended manifold $\widehat{N}_0=N_0\times [0,1]$. To define the extended flow, 
we introduce an external scaling parameter $\alpha\geq 0$, and construct a $C^{r_0}$ flow 
$\widehat{\Psi}^{\alpha}_t$ on the manifold $\widehat{M}$ by
\[\widehat{\Psi}^{\alpha}_t\big(x, \vep\big):=\big(\Phi_{t,\alpha\vep}(x),\vep\big)\quad \text{for all~}(x,\vep)\in M\times[0,1]\]
For $\alpha=0$,  $\widehat{N}_0$ is a $C^\ell$ NHIM of the flow $\widehat{\Psi}^0_t=\Phi_{t,0}\times \mathrm{Id}$. 
When $\alpha$ is fixed and sufficiently small, the flow $\widehat{\Psi}^{\alpha}_t$ is close enough to $\Phi_{t,0}$, then the persistence result  implies that the flow $\widehat{\Psi}^{\alpha}_t$ has a  NHIM $\widehat{N}_\alpha\subset\widehat{M}$, and $\widehat{N}_\alpha$ is $C^\ell$ close and diffeomorphic to $\widehat{N}_0$. Note that $\widehat{N}_\alpha$  can be decomposed into
\[\widehat{N}_\alpha=\coprod_{\vep\in[0,1]} N_{\alpha\vep}\times\{\vep\},
\]
where each $N_{\alpha\vep}$ is invariant under $\Phi_{t,\alpha\vep}$ and
 depends $C^\ell$-smoothly on the parameter $\vep\in[0,1]$. Also, it is not difficult to check that each $N_{\alpha\vep}$ is a NHIM.
  Therefore, we conclude that for $\vep\in[0,\alpha]$,  $N_{\vep}$ is a normally hyperbolic and $\Phi_{t,\vep}$-invariant manifold depending $C^\ell$-smoothly on the parameter $\vep$.
  
  %Analogous results hold for diffeomorphisms. 

\section{The scattering map}\label{appendix_scattering_map}
The scattering map is used to  describe homoclinic excursions. This map is introduced explicitly in \cite{DLS2000} and  enjoys remarkable geometric properties \cite{DLS2008}.

Recall that   $W^s_N$ and $W^u_N$ are, respectively, foliated by the stable leaves $W_x^s$ and the unstable leaves $W^u_x$, $x\in N$. For any point $x\in W^s_N$ (resp. $x\in W^u_N$), there is a unique point $x_+\in N$ (resp. $x_-\in N$) such that 
$x\in W^s_{x_+}$ (resp. $x\in W^u_{x_-}$). Then we can define the
 \emph{wave maps} which are projections along the  the leaves: 
\begin{equation}\label{appendix_wavemaps}
\Omega_{+}: W^{s}_N\longrightarrow N, \quad x\longmapsto x_{+};\quad\quad
     \Omega_{-}: W^{u}_N\longrightarrow N, \quad x\longmapsto x_{-}.
\end{equation}
The wave maps are  $C^\ell$ smooth as a result of  the $C^\ell$-foliation property (see Proposition \ref{pro_smooth_su}). 

To define the scattering map, we need the following transversality conditions:
\begin{enumerate}
	\item $W^s_N$ and $W^u_N$ have a transversal intersection along a homoclinic manifold $\Gamma$, i.e., for each $z\in\Gamma$,
\begin{equation}\label{tran_intersection_condition_1}
	T_zM=T_z W^u_N+ T_zW^s_N, \quad 
T_z\Gamma=T_z W^u_N\cap T_zW^s_N.
\end{equation}
\item  $\Gamma$ is transverse to the foliations of the stable/unstable
manifolds at each point $z\in\Gamma$, that is
\begin{equation}\label{tran_intersection_condition_2}
	T_z W^s_N= T_z\Gamma\oplus T_z W^s_{x_+}, \quad T_z W^u_N= T_z\Gamma\oplus T_z W^u_{x_-},
\end{equation}
where $x_\pm$ are the uniquely defined points in $N$ satisfying $z\in W^{s}_{x_+}\cap W^u_{x_-}$.
\end{enumerate} 
%Condition \eqref{tran_intersection_condition_2} can be  implied from $T_z M$ $=$ $T_z\Gamma\oplus T_z W^s_{x_+}\oplus T_z W^u_{x_-}$. 
% 
%This condition is used to to define the scattering map locally, and  this local definition could have some monodromy if extended to domains with non-contractible loops.

 \begin{Rem}\label{existence_homo_mfld}
 Obviously, $\text{dim}(\Gamma)=\text{dim}(N)$.
If \eqref{tran_intersection_condition_1}--\eqref{tran_intersection_condition_2} hold at some point $z^*\in  W^u_N\cap W^s_N$, then by the implicit function theorem the transversality conditions are also satisfied for all $z\in W^u_N\cap W^s_N$ close to $z^*$. Hence we can find a locally unique manifold $\Gamma\ni z^*$  satisfying \eqref{tran_intersection_condition_1}--\eqref{tran_intersection_condition_2}. In addition, $\Gamma$  is $C^\ell$. 
 \end{Rem}

Consider the wave maps restricted to the $C^\ell$ manifold $\Gamma$, denoted by
\begin{align*}
\Omega_{+}|_{_{\Gamma}}: \Gamma\to \Omega_{+}(\Gamma) \quad\text{~and~}\quad \Omega_{-}|_{_{\Gamma}}: \Gamma\to \Omega_{-}(\Gamma).
\end{align*}
They are $C^\ell$ local diffeomorphisms in general. 
Even if  $\Omega_{\pm}|_{_{\Gamma}}$ are locally invertible, they could fail to be invertible in a domain with non-contractible loops, see \cite{DLS2000, DLS2006orbits} for more examples.

Following \cite{DLS2008}, we say $\Gamma\subset  W^u_N\cap W^s_N$  a \emph{homoclinic channel} if it satisfies  \eqref{tran_intersection_condition_1}--\eqref{tran_intersection_condition_2} and $\Omega_{\pm}|_{_{\Gamma}}$ are $C^\ell$ diffeomorphisms. Then, the \emph{scattering map} $\sigma^\Gamma$ associated to the homoclinic channel $\Gamma$ is 
 \begin{equation}
	\sigma^\Gamma=\Omega_+|_{_{\Gamma}}\circ\big(\Omega_-|_{_{\Gamma}}\big)^{-1}~:~ \Omega_{-}(\Gamma)\longrightarrow \Omega_{+}(\Gamma).
 \end{equation}
Note that $\sigma^\Gamma$ is $C^\ell$ smooth. Clearly,
the definition of  scattering map depends on the homoclinic channel. Sometimes we will omit $\Gamma$ from the notation when there is no confusion.
  One can also expect to have infinitely many $\Gamma$, and each of which has a different scattering map. 
\begin{Rem}
 We shall note that there is no actual orbit of the flow $\Phi_t$ starting from $x_-$ to $x_+$. If $x_+=\sigma^\Gamma(x_-)$, then we infer from \eqref{def_leave} that there is a point $z\in \Gamma$ satisfying
\[
\textup{dist}\big(\Phi_t(z),\Phi_t(x_{+})\big) \leq \widetilde{C}e^{(\mu_s+\widetilde{\vep})t}, \quad t\to +\infty; \quad
\textup{dist}\big(\Phi_t(z),\Phi_t(x_{-})\big) \leq \widetilde{C}e^{(\lambda_u-\widetilde{\vep})t}, \quad t\to -\infty.\]
 \end{Rem}

\section{Proofs of Theorem \ref{Thm_scattering_shadowing} and Theorem \ref{existence_orbits}} \label{Appendix_section}

For the reader's convenience we repeat the relevant material from \cite{Gidea_Delallave_seara2020} to
 give a sketch of the proof of Theorems \ref{Thm_scattering_shadowing}--\ref{existence_orbits}. 
 
\begin{proof}[Proof of Theorem \ref{Thm_scattering_shadowing}]
Let $N=[1/\vep]$ be the integer part of  $1/\vep$ and consider the following orbit of $s_\vep$:
 \begin{equation}
	x_i=s_\vep(x_{i-1}),\quad x_0=(I_0,\theta_0),\quad i=1,\cdots,N.
\end{equation}
By assumption we have a curve $\gamma:[0,1]\to\Lambda_0$ with $\gamma(0)=x_0$ which is a solution to 
\begin{align}\label{Appen_HF}
	\dot{x}=\mathbf{J}\nabla \cL^*(x),\quad x\in\Lambda_0,
\end{align} 
and the set $V \subset \text{Dom}(\cL^*)$ is a neighborhood of $\gamma([0,1])$. 

We claim that all points of the sequence $\{x_i\}_{i=0}^N$  lies in $V$ as long as $\vep$ is small enough.
Let us take a sequence $y_{i}:=\gamma(i\vep)$ with $i=0,\cdots,N$. Denoting  by $\phi_t$ the Hamiltonian flow associated to the equation 
\eqref{Appen_HF}, we  invoke the Gronwall inequality to \eqref{Appen_HF} to find a constant $C_1>0$ such that
\begin{equation}\label{Appen_est_1}
	\|\phi_\vep(x)-\phi_\vep(x')\|\leq e^{C_1\vep}\|x-x'\|, \quad x, x'\in\Lambda_0.
\end{equation}
Recalling the scattering map $s_\vep$ is, up to $O(\vep^2)$, the $\vep$-flow of the  Hamiltonian $\cL^*(I,\theta)$, we obtain
\begin{equation}\label{Appen_est_2}
	\|s_\vep(x)-\phi_\vep(x)\|\leq C_2\vep^2,\quad x\in\Lambda_0,
\end{equation}
where $C_2>0$ is a constant. We also remark that $C_1$ and $C_2$ depend on $H_1$.

As $y_{i}=\gamma(i\vep)=\phi_\vep(y_{i-1})$ for each $i=1,\dots, N$,  it follows from  inequalities \eqref{Appen_est_1}--\eqref{Appen_est_2} that
\begin{align*}
\|x_i-y_i\|=\|s_\vep(x_{i-1})-\phi_\vep(y_{i-1})\|&\leq \|s_\vep(x_{i-1})-\phi_\vep(x_{i-1})\|+\|\phi_\vep(x_{i-1})-\phi_\vep(y_{i-1})\|\\
&\leq C_2\vep^2+e^{C_1\vep}\|x_{i-1}-y_{i-1}\|.	
\end{align*}
Denoting  $w_i:=\|x_i-y_i\|+\frac{C_2\vep^2}{e^{C_1\vep}-1}$, the  inequality  above implies that
\[w_i\leq e^{C_1\vep}w_{i-1}, \quad \text{for~} i=1,\cdots, N\]
Since $x_0=y_0$,  we get $w_0=\frac{C_2\vep^2}{e^{C_1\vep}-1}$ and
\[
w_i\leq w_0 \left(e^{C_1\vep }\right)^i.
\]
Consequently, 
\begin{align}\label{xiyi_est}
	\|x_i-y_i\|\leq \frac{C_2\vep^2}{e^{C_1\vep}-1}\bigg[\left(e^{C_1\vep}\right)^i-1\bigg]\leq \frac{C_2\vep }{C_1}e^{C_1}, \quad  \text{for~} i=1,\cdots, N,
\end{align}
where the last inequality follows from $e^{C_1\vep}-1\geq C_1\vep$
 and $N=[1/\vep]$. Denoting $K:=C_2e^{C_1}/C_1$, inequality \eqref{xiyi_est} becomes \[\text{dist}(x_i,\gamma(i\vep))\leq K\vep.\] Finally, as
  the distance $d:=\text{dist}\big(\gamma([0,1]), \partial V\big)>0$,  there exists a small $\vep_1=\vep_1(H_1)\in(0, d/K)$ such that for each $0<|\vep|<\vep_1$, the sequence $\{x_i\}_{i=0}^N$ lies in the set $V$. This proves our claim.
 
As $\gamma_\vep=k_\vep\circ \gamma\subset \Lambda_\vep$, we set $x^\vep_i=k_\vep\circ x_i\in  \Lambda_\vep$. By enlarging $K$ if necessary, we have $\text{dist}(x^\vep_i,\gamma_\vep(i\vep))\leq K \vep$, and the sequence $\{x^\vep_i\}_{i=0}^N$ lies in the neighborhood  $V_\vep:=k_\vep\circ V$ of the curve $\gamma_\vep$. Since almost every point in $V_\vep$ is recurrent for $\whf_\vep\big|_{\La_0}$,  our theorem follows 
immediately from the shadowing lemma \ref{Appen_lemma} below.
\end{proof}

\begin{Lem}[Shadowing Lemma for pseudo-orbits]\label{Appen_lemma}
For a diffeomorphism $f: M\to M$, we let $\Lambda$ be a NHIM and $\sigma$ be a scattering map. Assume that $f$ preserves a measure absolutely continuous with respect to the Lebesgue measure on $\Lambda$, and that $\sigma$ sends positive measure sets to positive measure sets.
	
	Let $x_i=\sigma(x_{i-1})$, $i=1,\cdots, N$ be a finite orbit of the scattering map, which is contained in an open set $V$ with almost every point of $V$ recurrent for $f$. Then for every $\delta>0$, there is an orbit $\{z_i\}_{i=0}^N$ of $f$ satisfying $z_{i+1}=f^{k_i}(z_i)$ for some integer $k_i>0$, and $\text{dist}(z_i,x_i)<\delta$ for all $i=0,\cdots, N$.	
\end{Lem}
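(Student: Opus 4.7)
The plan is to construct the shadowing orbit $\{z_i\}_{i=0}^{N}$ inductively: given $z_i$ within distance $\delta$ of $x_i$, I will produce an integer $k_i>0$ so that $z_{i+1}:=f^{k_i}(z_i)$ lies within $\delta$ of $x_{i+1}=\sigma(x_i)$. The construction rests on three ingredients. First, every jump $x_i\mapsto x_{i+1}$ of the scattering map is realized by a homoclinic point $\zeta_{i+1}\in\Gamma$ with $\Omega_-(\zeta_{i+1})=x_i$ and $\Omega_+(\zeta_{i+1})=x_{i+1}$, so the backward $f$-orbit of $\zeta_{i+1}$ is asymptotic to the inner orbit through $x_i$ and its forward orbit is asymptotic to the inner orbit through $x_{i+1}$. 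Second, the inclination (or $\lambda$-) lemma for NHIMs guarantees that forward iterates $f^k$ of a small disk transverse to $W^s_\Lambda$ accumulate $C^1$ on $W^u_\Lambda$ as $k\to\infty$, and dually for preimages of disks transverse to $W^u_\Lambda$. Third, the recurrence hypothesis lets the inner dynamics on $\Lambda$ return arbitrarily close to any prescribed point of $\Lambda\cap V$.

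The inductive step would run as follows. Using the local product structure near $\Lambda$, I would fix product windows $R_{i+1}\simeq B_{i+1}\times D^s_{i+1}\times D^u_{i+1}$ around $x_{i+1}$ with $B_{i+1}\subset\Lambda$ a ball of radius $<\delta/2$, so that every point of $R_{i+1}$ is automatically within $\delta$ of $x_{i+1}$. By the transversality conditions \eqref{tran_intersection_condition_1}--\eqref{tran_intersection_condition_2} in the definition of $\Gamma$, the union $W^u(B_i)$ of unstable leaves over a small ball $B_i\subset\Lambda$ centered at $x_i$ is a smooth disk containing $\zeta_{i+1}$ and transverse to $W^s_\Lambda$. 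The $\lambda$-lemma then furnishes an integer $k_i$ (which can be taken arbitrarily large) and a positive-measure subset $A_i\subset B_i$ such that the forward image of the corresponding unstable leaves near $\zeta_{i+1}$ under $f^{k_i}$ is $C^1$-close to an open piece of $W^u_\Lambda$ inside $R_{i+1}$, whose projection via $\Omega_+$ covers a positive-measure subset of $B_{i+1}$. To turn this outer, transverse approach into a base point on $\Lambda$ suitable for launching the next excursion, one invokes the recurrence of $f|_\Lambda$ to arrange that the relevant inner trajectory returns into the next window.

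The main obstacle, and the measure-theoretic crux, is running this argument consistently over all $N$ steps. I would maintain a descending nested sequence of positive-measure sets $\mathcal{A}_0\supset f^{-k_0}(\mathcal{A}_1)\supset\cdots\supset f^{-(k_0+\cdots+k_{N-1})}(\mathcal{A}_N)$ with $\mathcal{A}_i\subset R_i$, obtained by intersecting each window $R_i$ with the positive-measure set produced at the previous step. The assumption that $\sigma$ sends positive-measure sets to positive-measure sets, the absolute continuity of the $f$-invariant measure on $\Lambda$, and the recurrence assumption on $V$, are exactly what is needed to keep every set in this sequence of positive measure; its intersection is nonempty, and any $z_0$ therein gives the desired shadowing orbit. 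The difficulty, which is absent from classical hyperbolic shadowing, is the lack of expansion in the inner dynamics on $\Lambda$: one cannot simply close small gaps inside $\Lambda$ by a stable-leaf argument, so the recurrence must be used to "wait" on $\Lambda$ until the window is correctly placed, and the quantitative $C^1$ control in the $\lambda$-lemma must be uniform in the base point on $\Lambda$ so that this waiting is compatible simultaneously across all $N$ jumps.
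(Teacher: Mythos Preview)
The paper does not actually prove this lemma: immediately after stating it, the authors write ``See \cite[Theorem 3.6]{Gidea_Delallave_seara2020} for a complete proof of the lemma above.'' So there is no in-paper proof to compare against.

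That said, your outline is precisely the strategy used in the cited reference. The three ingredients you isolate --- the homoclinic realization of each $\sigma$-step via a point in $\Gamma$, the $\lambda$-lemma for NHIMs to spread small transverse disks along $W^u_\Lambda$, and Poincar\'e recurrence on $\Lambda$ to compensate for the lack of normal hyperbolicity in the central directions --- are exactly the engine of that proof, and the backward-nested positive-measure construction you describe is how the $N$ steps are made consistent. The hypotheses that $f$ preserves an absolutely continuous measure on $\Lambda$ and that $\sigma$ sends positive-measure sets to positive-measure sets are used just as you say: to guarantee that at each stage the set of admissible initial conditions retains positive measure, so the final nested intersection (a finite one, since there are only $N$ steps) is nonempty. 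One small clarification: the shadowing orbit $\{z_i\}$ lives in the ambient manifold $M$, not on $\Lambda$, so in the actual argument one works with full-dimensional ``windows'' (your $R_i$) rather than subsets of $\Lambda$ itself; you already indicate this with your product windows, but it is worth keeping the bookkeeping straight when filling in details. Your sketch is correct and faithfully reflects the cited proof.
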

See \cite[Theorem 3.6]{Gidea_Delallave_seara2020} for a complete proof of the lemma above. 

\begin{proof}[Proof of Theorem \ref{existence_orbits}]
	By assumption \textbf{(H3b)}, $\mathbf{J}\nabla\cL^*(I,\theta)$ is transverse to the level set $\{I=I_*\}$ at some point $(I_*,\theta_*)\in \text{Dom}(\cL^*)$ $\subset \Lambda_0$, which implies that there exist two closed balls 
	\[D_I=\{I\in\R^d ~:~\|I-I_*\|\leq r_1\},\quad D_\theta=\{\theta\in\T^d ~:~\|\theta-\theta_*\|\leq r_1 \},\]
 such that $\mathbf{J}\nabla\cL^*(I,\theta)$ is still transverse to the level set $\{I=I'\}$  at each point $(I',\theta')\in$ $D_I\times D_\theta$ $\subset\text{Dom}(\cL^*)$. Here, we stress that the size of radius $r_1$  does not depend on $\vep$. 

Let $\gamma :$  $ [0,1]\to \Lambda_0$  be an integral curve, starting from $(I_*,\theta_*)$, induced by the  vector field $\mathbf{J}\nabla \cL^*$, then there is a time $t_*\in(0,1)$ independent of $\vep$, such that $\gamma([0,t_*])\subset D_I\times D_\theta$.   It follows  that the curve $\gamma(t)=\big(I(\gamma(t)),\theta(\gamma(t))\big)$ is transverse to every level set $\{I=I(\gamma(t))\}$  for each  $t\in [0,t_*]$. Thus,
\begin{align}\label{Igat_0}
	\|I(\gamma(t_*))-I(\gamma(0))\|=\|I(\gamma(t_*))-I_*\|\geq 2\rho,
\end{align}
for some constant $\rho>0$ independent of $\vep$.

Recalling that in Theorem \ref{Thm_scattering_shadowing}  we use $f_\vep$ to denote the time-$2\pi$ map for the flow of the Hamiltonian  $\HH_\vep$, and  use  $\whf_\vep\big|_{\La_0}= (k_\vep)^{-1}\circ f_\vep\big|_{\La_\vep}\circ k_\vep$ to denote the parameterized  map defined on $\Lambda_0$. Denoting  
\[
V:=\bigcup_{k\geq 0} \widehat{f}_\vep^k(D_I\times D_\theta),
\]
it is a $\widehat{f}_\vep$-invariant set in $\Lambda_0$, i.e. $\widehat{f}_\vep(V)\subset V$. The measure of $V$ is either finite or $+\infty$. 

\textbf{Case 1:} The measure of $V$ is finite.  By the $\widehat{f}_\vep$-invariance we infer that almost every point in $V$ is recurrent. As $V$ is a neighborhood of the curve $\gamma|_{[0,t_*]}$,  we can apply Theorem \ref{Thm_scattering_shadowing}. Indeed, let  $\vep_1=\vep_1(H_1)>0$ and $K>0$ be given in Theorem \ref{Thm_scattering_shadowing},  
 then for every small $\delta>0$ and every $\vep\in(-\vep_1,\vep_1)\setminus\{0\}$, there exists
an orbit $\{z_i\}_{i=0,\cdots,m}$  of the diffeomorphism $f_\vep$   
 and  for each $i=0,\cdots,m-1$,
\[
z_{i+1}=f^{k_i}_\vep(z_i),\quad \text{~for some~} k_i\in\Z^+;\qquad d(z_i,\gamma_\vep(t_i))<\delta+K\vep 
\]
where $0=t_0<t_1<\cdots<t_m=t_*$ and $\gamma_\vep=k_\vep\circ\gamma\subset\Lambda_\vep$. In particular, $d(z_0,\gamma_\vep(0))<\delta+K\vep$ and $d(z_m,\gamma_\vep(t_*))$ $<\delta+K\vep$. Using \eqref{Igat_0} we obtain   
\[\|I(z_m)-I(z_0)\|>2\rho-2(\delta+K\vep).\]
 Suppose $\vep<\rho/(4K)$ if necessary,  we can choose $\delta<\rho/4$ such that 
 \[\|I(z_m)-I(z_0)\|>\rho.\]
 Moreover, $\|I(z_0)-I_*\|<\delta+K\vep$. 

\textbf{Case 2:} The measure of $V$ is $+\infty$. Then the existence of diffusing orbits is evident. Indeed, it implies that for every  $\rho>0$, there is an orbit $\{ \hat{z}_i : i=0,\cdots,m\}\subset \Lambda_0$  of the map $\widehat{f}_\vep$, such that $\hat{z}_0\in D_I\times D_\theta$ and
$\|I(\hat{z}_m)-I(\hat{z}_0)\|>2\rho$.  Thus, taking $z_i=k_\vep\circ\hat{z}_i$ for each $i$, the sequence $\{z_i\}_{i=0,\cdots,m}$ is exactly an orbit of  $f_\vep$, and
$\|I(z_m)-I(z_0)\|>2\rho-O(\vep)>\rho$. Finally, to ensure $I(z_0)$ is $(\delta+K\vep)$-close to $I_*$, it suffices to assume that the radius $r_1$ of $D_I$  satisfies $r_1\leq \delta/4$.

This completes the proof.
\end{proof}

\noindent{\bf Acknowledgments}
Qinbo Chen thanks the Georgia Institute of Technology for its warm hospitality during his visit.  
 He also wishes to thank KTH Royal Institute of Technology where the final version of this manuscript was completed.
Rafael de la Llave was partially supported by DMS 1800241.

%\bibliographystyle{plain}
%\bibliography{mybib}

\begin{thebibliography}{10}

\bibitem{Ar1964}
V.~I. Arnol\cprime~d.
\newblock Instability of dynamical systems with many degrees of freedom.
\newblock {\em Dokl. Akad. Nauk SSSR}, 156:9--12, 1964.

\bibitem{Arnoldbook2006}
V.~I. Arnol\cprime~d, V.~V. Kozlov, and A.~I. Ne\u{\i}shtadt.
\newblock {\em Mathematical aspects of classical and celestial mechanics},
  volume~3 of {\em Encyclopaedia of Mathematical Sciences}.
\newblock Springer-Verlag, Berlin, third edition, 2006.
\newblock [Dynamical systems. III], Translated from the Russian original by E.
  Khukhro.

\bibitem{Bates_Lu_Zeng2000}
Peter~W. Bates, Kening Lu, and Chongchun Zeng.
\newblock Invariant foliations near normally hyperbolic invariant manifolds for
  semiflows.
\newblock {\em Trans. Amer. Math. Soc.}, 352(10):4641--4676, 2000.

\bibitem{Bernard2008}
Patrick Bernard.
\newblock The dynamics of pseudographs in convex {H}amiltonian systems.
\newblock {\em J. Amer. Math. Soc.}, 21(3):615--669, 2008.

\bibitem{Bernard_Kaloshin_Zhang2016}
Patrick Bernard, Vadim Kaloshin, and Ke~Zhang.
\newblock Arnold diffusion in arbitrary degrees of freedom and normally
  hyperbolic invariant cylinders.
\newblock {\em Acta Math.}, 217(1):1--79, 2016.

\bibitem{BB2002}
Massimiliano Berti and Philippe Bolle.
\newblock A functional analysis approach to {A}rnold diffusion.
\newblock {\em Ann. Inst. H. Poincar\'{e} Anal. Non Lin\'{e}aire},
  19(4):395--450, 2002.

\bibitem{Bessi_Chierchia_Valdinoci01}
Ugo Bessi, Luigi Chierchia, and Enrico Valdinoci.
\newblock Upper bounds on {A}rnold diffusion times via {M}ather theory.
\newblock {\em J. Math. Pures Appl. (9)}, 80(1):105--129, 2001.

\bibitem{Bolotin_Treschev1999}
S.~Bolotin and D.~Treschev.
\newblock Unbounded growth of energy in nonautonomous {H}amiltonian systems.
\newblock {\em Nonlinearity}, 12(2):365--388, 1999.

\bibitem{CGD_Arnold2017}
Maciej~J. Capi\'{n}ski, Marian Gidea, and Rafael de~la Llave.
\newblock Arnold diffusion in the planar elliptic restricted three-body
  problem: mechanism and numerical verification.
\newblock {\em Nonlinearity}, 30(1):329--360, 2017.

\bibitem{Chen_Cheng2017}
Qinbo Chen and Chong-Qing Cheng.
\newblock Gevrey genericity of {A}rnold diffusion in a priori unstable
  {H}amiltonian systems.
\newblock {\em Nonlinearity}, 34(1):455--508, 2021.

\bibitem{Cheng2017_DoubleResonace}
Chong-Qing Cheng.
\newblock Dynamics around the double resonance.
\newblock {\em Camb. J. Math.}, 5(2):153--228, 2017.

\bibitem{Cheng2019}
Chong-Qing Cheng.
\newblock The genericity of {A}rnold diffusion in nearly integrable
  {H}amiltonian systems.
\newblock {\em Asian J. Math.}, 23(3):401--438, 2019.

\bibitem{Cheng_Xue2015}
Chong-Qing Cheng and Jinxin Xue.
\newblock Arnold diffusion in nearly integrable {H}amiltonian systems of
  arbitrary degrees of freedom.
\newblock {\em arXiv:1503.04153}, 2015.

\bibitem{Cheng_Xue2019}
Chong-Qing Cheng and Jinxin Xue.
\newblock Variational approach to {A}rnold diffusion.
\newblock {\em Sci. China Math.}, 62(11):2103--2130, 2019.

\bibitem{Cheng_Yan2004}
Chong-Qing Cheng and Jun Yan.
\newblock Existence of diffusion orbits in a priori unstable {H}amiltonian
  systems.
\newblock {\em J. Differential Geom.}, 67(3):457--517, 2004.

\bibitem{Cheng_Yan2009}
Chong-Qing Cheng and Jun Yan.
\newblock Arnold diffusion in {H}amiltonian systems: a priori unstable case.
\newblock {\em J. Differential Geom.}, 82(2):229--277, 2009.

\bibitem{CG1994}
L.~Chierchia and G.~Gallavotti.
\newblock Drift and diffusion in phase space.
\newblock {\em Ann. Inst. H. Poincar\'{e} Phys. Th\'{e}or.}, 60(1):144, 1994.

\bibitem{Cr2003}
Jacky Cresson.
\newblock Symbolic dynamics and {A}rnold diffusion.
\newblock {\em J. Differential Equations}, 187(2):269--292, 2003.

\bibitem{DLS2000}
Amadeu Delshams, Rafael de~la Llave, and Tere~M. Seara.
\newblock A geometric approach to the existence of orbits with unbounded energy
  in generic periodic perturbations by a potential of generic geodesic flows of
  {${\bf T}^2$}.
\newblock {\em Comm. Math. Phys.}, 209(2):353--392, 2000.

\bibitem{DLS2003}
Amadeu Delshams, Rafael de~la Llave, and Tere~M. Seara.
\newblock A geometric mechanism for diffusion in {H}amiltonian systems
  overcoming the large gap problem: announcement of results.
\newblock {\em Electron. Res. Announc. Amer. Math. Soc.}, 9:125--134, 2003.

\bibitem{DLS2006}
Amadeu Delshams, Rafael de~la Llave, and Tere~M. Seara.
\newblock A geometric mechanism for diffusion in {H}amiltonian systems
  overcoming the large gap problem: heuristics and rigorous verification on a
  model.
\newblock {\em Mem. Amer. Math. Soc.}, 179(844):viii+141, 2006.

\bibitem{DLS2006orbits}
Amadeu Delshams, Rafael de~la Llave, and Tere~M. Seara.
\newblock Orbits of unbounded energy in quasi-periodic perturbations of
  geodesic flows.
\newblock {\em Adv. Math.}, 202(1):64--188, 2006.

\bibitem{DLS2008}
Amadeu Delshams, Rafael de~la Llave, and Tere~M. Seara.
\newblock Geometric properties of the scattering map of a normally hyperbolic
  invariant manifold.
\newblock {\em Adv. Math.}, 217(3):1096--1153, 2008.

\bibitem{DLS2016}
Amadeu Delshams, Rafael de~la Llave, and Tere~M. Seara.
\newblock Instability of high dimensional {H}amiltonian systems: multiple
  resonances do not impede diffusion.
\newblock {\em Adv. Math.}, 294:689--755, 2016.

\bibitem{DHLS2008}
Amadeu Delshams, Marian Gidea, Rafael de~la Llave, and Tere~M. Seara.
\newblock Geometric approaches to the problem of instability in {H}amiltonian
  systems. {A}n informal presentation.
\newblock In {\em Hamiltonian dynamical systems and applications}, NATO Sci.
  Peace Secur. Ser. B Phys. Biophys., pages 285--336. Springer, Dordrecht,
  2008.

\bibitem{Delshams_Huguet2009}
Amadeu Delshams and Gemma Huguet.
\newblock Geography of resonances and {A}rnold diffusion in a priori unstable
  {H}amiltonian systems.
\newblock {\em Nonlinearity}, 22(8):1997--2077, 2009.

\bibitem{Delshams_Huguet2011}
Amadeu Delshams and Gemma Huguet.
\newblock A geometric mechanism of diffusion: rigorous verification in a priori
  unstable {H}amiltonian systems.
\newblock {\em J. Differential Equations}, 250(5):2601--2623, 2011.

\bibitem{DKDS_global2019}
Amadeu Delshams, Vadim Kaloshin, Abraham de~la Rosa, and Tere~M. Seara.
\newblock Global instability in the restricted planar elliptic three body
  problem.
\newblock {\em Comm. Math. Phys.}, 366(3):1173--1228, 2019.

\bibitem{Dels_Scha18}
Amadeu Delshams and Rodrigo~G. Schaefer.
\newblock Arnold diffusion for a complete family of perturbations with two
  independent harmonics.
\newblock {\em Discrete Contin. Dyn. Syst.}, 38(12):6047--6072, 2018.

\bibitem{FGKR_2016_Kirkwood}
Jacques F\'{e}joz, Marcel Gu\`ardia, Vadim Kaloshin, and Pablo Rold\'{a}n.
\newblock Kirkwood gaps and diffusion along mean motion resonances in the
  restricted planar three-body problem.
\newblock {\em J. Eur. Math. Soc. (JEMS)}, 18(10):2315--2403, 2016.

\bibitem{Fen1971}
Neil Fenichel.
\newblock Persistence and smoothness of invariant manifolds for flows.
\newblock {\em Indiana Univ. Math. J.}, 21:193--226, 1971.

\bibitem{Fen1977}
Neil Fenichel.
\newblock Asymptotic stability with rate conditions. {II}.
\newblock {\em Indiana Univ. Math. J.}, 26(1):81--93, 1977.

\bibitem{Fen1979}
Neil Fenichel.
\newblock Geometric singular perturbation theory for ordinary differential
  equations.
\newblock {\em J. Differential Equations}, 31(1):53--98, 1979.

\bibitem{Gelfreich_Turaev2008}
Vassili Gelfreich and Dmitry Turaev.
\newblock Unbounded energy growth in {H}amiltonian systems with a slowly
  varying parameter.
\newblock {\em Comm. Math. Phys.}, 283(3):769--794, 2008.

\bibitem{Gelfreich_Turaev2017}
Vassili Gelfreich and Dmitry Turaev.
\newblock Arnold diffusion in a priori chaotic symplectic maps.
\newblock {\em Comm. Math. Phys.}, 353(2):507--547, 2017.

\bibitem{Gidea_dlL2017}
Marian Gidea and Rafael de~la Llave.
\newblock Perturbations of geodesic flows by recurrent dynamics.
\newblock {\em J. Eur. Math. Soc. (JEMS)}, 19(3):905--956, 2017.

\bibitem{Gidea_delaLlave2018}
Marian Gidea and Rafael de~la Llave.
\newblock Global {M}elnikov theory in {H}amiltonian systems with general
  time-dependent perturbations.
\newblock {\em J. Nonlinear Sci.}, 28(5):1657--1707, 2018.

\bibitem{Gidea_Delallave_seara2020}
Marian Gidea, Rafael de~la Llave, and Tere M-Seara.
\newblock A general mechanism of diffusion in {H}amiltonian systems:
  qualitative results.
\newblock {\em Comm. Pure Appl. Math.}, 73(1):150--209, 2020.

\bibitem{Gidea_Delallave_seara2020_skip}
Marian Gidea, Rafael de~la Llave, and Tere M-Seara.
\newblock A general mechanism of instability in {H}amiltonian systems:
  {S}kipping along a normally hyperbolic invariant manifold.
\newblock {\em Discrete Contin. Dyn. Syst.}, 40(12):6795--6813, 2020.

\bibitem{GKZ2016}
Marcel Guardia, Vadim Kaloshin, and Jianlu Zhang.
\newblock A second order expansion of the separatrix map for trigonometric
  perturbations of a priori unstable systems.
\newblock {\em Comm. Math. Phys.}, 348(1):321--361, 2016.

\bibitem{HPS1977}
M.~W. Hirsch, C.~C. Pugh, and M.~Shub.
\newblock {\em Invariant manifolds}.
\newblock Lecture Notes in Mathematics, Vol. 583. Springer-Verlag, Berlin-New
  York, 1977.

\bibitem{KLS2014}
Vadim Kaloshin, Mark Levi, and Maria Saprykina.
\newblock Arnold diffusion in a pendulum lattice.
\newblock {\em Comm. Pure Appl. Math.}, 67(5):748--775, 2014.

\bibitem{KZ2015}
Vadim Kaloshin and Ke~Zhang.
\newblock Arnold diffusion for smooth convex systems of two and a half degrees
  of freedom.
\newblock {\em Nonlinearity}, 28(8):2699--2720, 2015.

\bibitem{KZ_book2020}
Vadim Kaloshin and Ke~Zhang.
\newblock {\em Arnold Diffusion for Smooth Systems of Two and a Half Degrees of
  Freedom: (AMS-208)}.
\newblock Princeton University Press, 2020.

\bibitem{LM2005}
Pierre Lochak and Jean-Pierre Marco.
\newblock Diffusion times and stability exponents for nearly integrable
  analytic systems.
\newblock {\em Cent. Eur. J. Math.}, 3(3):342--397, 2005.

\bibitem{Mane1996}
Ricardo Ma\~n{\'e}.
\newblock Generic properties and problems of minimizing measures of
  {L}agrangian systems.
\newblock {\em Nonlinearity}, 9(2):273--310, 1996.

\bibitem{Marco2016Arnold}
Jean-Pierre. Marco.
\newblock Arnold diffusion for cusp-generic nearly integrable convex systems on
  $\mathbb{A}^3$.
\newblock {\em arXiv:1602.02403}, 2016.

\bibitem{Mather1993}
John~N. Mather.
\newblock Variational construction of connecting orbits.
\newblock {\em Ann. Inst. Fourier (Grenoble)}, 43(5):1349--1386, 1993.

\bibitem{Mather2003}
John~N. Mather.
\newblock Arnold diffusion. {I}. {A}nnouncement of results.
\newblock {\em Sovrem. Mat. Fundam. Napravl.}, 2:116--130, 2003.

\bibitem{Pesin2004}
Yakov~B. Pesin.
\newblock {\em Lectures on partial hyperbolicity and stable ergodicity}.
\newblock Zurich Lectures in Advanced Mathematics. European Mathematical
  Society (EMS), Z\"{u}rich, 2004.

\bibitem{Treschev2002Trajectories}
D.~Treschev.
\newblock Trajectories in a neighbourhood of asymptotic surfaces of a priori
  unstable {H}amiltonian systems.
\newblock {\em Nonlinearity}, 15(6):2033--2052, 2002.

\bibitem{Treschev2004}
D.~Treschev.
\newblock Evolution of slow variables in a priori unstable {H}amiltonian
  systems.
\newblock {\em Nonlinearity}, 17(5):1803--1841, 2004.

\bibitem{Treschev2012}
D.~Treschev.
\newblock Arnold diffusion far from strong resonances in multidimensional {\it
  a priori} unstable {H}amiltonian systems.
\newblock {\em Nonlinearity}, 25(9):2717--2757, 2012.

\bibitem{Xue2014}
Jinxin Xue.
\newblock Arnold diffusion in a restricted planar four-body problem.
\newblock {\em Nonlinearity}, 27(12):2887--2908, 2014.

\end{thebibliography}
\def\cprime{$'$}

\end{document}